\theoremstyle{definition}
\newtheorem{thm}{Theorem}[subsection]
\newtheorem*{teo}{Theorem}
\newtheorem{coro}[thm]{Corollary}
\newtheorem{prop}[thm]{Proposition}
\newtheorem{defin}[thm]{Definition}
\newtheorem{ex}[thm]{Example}
\newtheorem{remark}[thm]{Remark}
\numberwithin{equation}{subsection}
\newenvironment{mydescription}{%
   
   \begin{description}%
}{%
   \end{description}%
}
\newcounter{algo}
\newenvironment{algo}[1][]{\refstepcounter{algo}\par\medskip\noindent%
   \hrule\smallskip
   \noindent\textbf{Algorithm~\thealgo: }#1
   
   \smallskip\hrule\smallskip}{\smallskip\hrule\smallskip}
\DeclareMathOperator{\Supp}{Supp}
\DeclareMathOperator{\LT}{LT}
\DeclareMathOperator{\weight}{weight}
\DeclareMathOperator{\GL}{GL}
\DeclareMathOperator{\Sym}{Sym}
\DeclareMathOperator{\im}{im}
\DeclareMathOperator{\id}{id}
\DeclareMathOperator{\Hom}{Hom}
\DeclareMathOperator{\Ext}{Ext}
\renewcommand{\mod}{\operatorname{mod}}
\renewcommand{\geq}{\geqslant}
\renewcommand{\leq}{\leqslant}
\newcommand{\ZZ}{\mathbb{Z}}
\newcommand{\TT}{\mathbb{T}}
\newcommand{\KK}{\mathbb{K}}
\newcommand{\CC}{\mathbb{C}}
\begin{document}
\title{Propagating weights of tori along free resolutions}
\author{Federico Galetto}
\address{Department of Mathematics \& Statistics, Queen's University,
48 University Avenue, Kingston, ON K7L 3N6,
Canada}
\email{\href{mailto:galetto@mast.queensu.ca}{galetto@mast.queensu.ca}}
\urladdr{\href{http://math.galetto.org}{math.galetto.org}}
\date{\today}
\keywords{Equivariant free resolution, irreducible representation, weight, torus, reductive group, algorithm.}
\subjclass[2010]{Primary 13D02; Secondary 13A50, 13P20, 20G05}

\begin{abstract}
The action of a torus on a graded module over a polynomial ring extends to the entire minimal free resolution of the module. We explain how to determine the action of the torus on the free modules in the resolution, when the resolution can be calculated explicitly. The problem is reduced to analyzing how the weights of a torus propagate along an equivariant map of free modules. The results obtained are used to design algorithms which have been implemented in the software system Macaulay2.
\end{abstract}

\maketitle

\tableofcontents

\section{Introduction}
This paper is structured as follows.
In this section you will find an overview of the questions we
address (\S\ref{sec:motivation}) followed by an example
(\S\ref{sec:an-intr-example}) presented with the minimum amount
of technical background.
Section \ref{basics} introduces some basic concepts of commutative algebra and the representation theory of tori, and proceeds to describe their natural interactions. In section \ref{propagation}, we analyze how weights of tori propagate along equivariant maps of free modules, first in the easier case of bases of weight vectors (\S\ref{bases_of_weight_vectors}) and then in a more general setting (\S\ref{general_case}). Our last section is devoted to the design of various algorithms: to propagate weights along an equivariant map of free modules from codomain to domain (\S\ref{sec_algo_map}), to propagate weights `forward' from domain to codomain (\S\ref{sec_algo_forward}), for resolutions (\S\ref{sec_algo_res}), and, as a bonus, an algorithm to determine the weights of graded components of modules (\S\ref{sec_algo_comps}). Finally, in \S\ref{subfields}, we discuss the possibility of carrying out all such computations over subfields.

An implementation of the algorithms of this paper for semisimple complex algebraic groups is included, under
the package name \texttt{HighestWeights}, with version 1.7 of the software system \texttt{Macaulay2} \cite{Grayson:uq} and is
documented in \cite{Galetto:2013ab}.

The author wishes to thank Jerzy Weyman, for suggesting the project, Claudiu Raicu, for an interesting conversation on the subject, and the entire \texttt{Macaulay2} community.
Additional thanks go the anonymous referees that provided many useful suggestions for improving this work.
The author was partially supported through an NSERC grant.

\subsection{Motivation}
\label{sec:motivation}

Every finitely generated module over a polynomial ring with coefficients in a field has a finite minimal free resolution which is unique up to isomorphism. It is typically used to produce numerical invariants such as projective dimension, regularity, (graded) Betti numbers and the Hilbert series of a module. While there are descriptions for certain classes of modules, finding a minimal free resolution of a module is, in general, a very difficult problem. Computational methods offer a solution to this problem in many cases, although they are limited in scope by time and memory constraints. As the matrices of the differentials grow in size, their description is often omitted.

Consider the case of a polynomial ring $A$ endowed with an action of a group $G$ which is compatible with grading and multiplication (see \S\ref{interactions} for the precise definitions). Let us denote $\mod_{\righttoleftarrow G} A$ the category of finitely generated graded $A$-modules with a compatible action of $G$ and homogeneous $G$-equivariant maps. If $M$ is an object in $\mod_{\righttoleftarrow G} A$, then the action of $G$ extends to the entire minimal free resolution of $M$. A free $A$-module $F$ is isomorphic to $(F/\mathfrak{m}F) \otimes A$, where $\mathfrak{m}$ denotes the maximal ideal generated by the variables of $A$. The representation theoretic structure of $F$, i.e. the action of $G$ on $F$, is then controlled by the representation $F/\mathfrak{m}F$. Therefore, if the complex $F_\bullet$:
\[0\to F_n \xrightarrow{d_n} F_{n-1} \to \ldots \to F_i \xrightarrow{d_i} F_{i-1} \to \ldots \to F_1 \xrightarrow{d_1} F_0\]
denotes a minimal free resolution of $M$, we could try to determine the action of $G$ on each representation $F_i/\mathfrak{m}F_i$.

The representation theoretic structure of $F_\bullet$ may offer some insight into the maps of the complex. Consider the situation of a differential $d_i\colon F_i\to F_{i-1}$, with $F_i/\mathfrak{m}F_i$ an irreducible representation of $G$. The map $d_i$ is completely determined by its image on a basis of $F_i$; hence we can reduce to a map of representations $F_i/\mathfrak{m}F_i \to F_{i-1}/\mathfrak{m}F_{i-1} \otimes A$. If the decomposition of the tensor product in the codomain contains only one copy of the irreducible $F_i/\mathfrak{m}F_i$ in the right degree, then Schur's lemma \cite[Ch. XVII, Prop. 1.1]{MR1878556} implies that the map is uniquely determined up to multiplication by a constant. In some cases, this information is enough to reconstruct the map completely (see \cite{Galetto:2013aa} for a few examples).

The representation theoretic structure of $F_\bullet$ may also be used to determine the class $[M]$, of a module $M$, in $K_0^G (\mod_{\righttoleftarrow G} A)$, the equivariant Grothendieck group of the category $\mod_{\righttoleftarrow G} A$. By construction of the Grothendieck group,
\[[M] = \sum_{i=0}^n (-1)^i [F_i/\mathfrak{m}F_i],\]
where the right hand side is the equivariant Euler characteristic of the complex $F_\bullet$.

Motivated by the discussion above, we pose the following question: is it possible to determine the action of a group on a minimal free resolution of a module computationally? The first assumption is that the resolution itself can be computed explicitly in a reasonable amount of time. Secondly, we restrict to a class of groups whose representation theory is well understood and manageable: tori. Every representation of a torus is semisimple, with irreducible representations being one dimensional and parametrized by weights. Moreover, weights can be conveniently represented by integer vectors. More importantly, finite dimensional representations of connected reductive algebraic groups over algebraically closed fields of characteristic zero are uniquely determined by the weights (with multiplicity) of a maximal torus. Therefore successfully developing the case of tori will provide a positive answer to our question for a larger class of groups.

Let $\varphi\colon E\to F$ denote a minimal presentation of a module $M$. In our experience, the presentation of a module with an action of a reductive group can often be written with respect to bases of weight vectors. Suppose $\{\tilde{e}_1,\ldots,\tilde{e}_r\}$ and $\{\tilde{f}_1,\ldots,\tilde{f}_s\}$ are bases of weight vectors of $E$ and $F$ respectively.
For every $\tilde{e}_j$, there exist polynomials $a_{i,j}\in A$ such that $\varphi (\tilde{e}_j) = \sum_{i=1}^s a_{i,j} \tilde{f}_i$; moreover, each $a_{i,j}$ is a weight vector in $A$ and 
\[\weight (\tilde{e}_j) = \weight(a_{i,j}) + \weight (\tilde{f}_i),\]
whenever $a_{i,j}$ is non zero (prop. \ref{weight_in_homogeneous_basis}). If the variables in $A$ are all weight vectors, then the weights of the $a_{i,j}$ can be easily obtained (prop. \ref{weight_of_poly}). Then, knowing the weights of the $\tilde{f}_i$, it is possible to recover the weights of the $\tilde{e}_j$ thus describing $E$ as a representation. A minimal presentation is the first differential in a minimal resolution. The weights of the other free modules in the resolution can be found by iterating the process just described, with the only catch that the remaining differentials, as obtained computationally, may not be expressed using bases of weight vectors. This situation is handled by changing to a suitable basis as we indicate in our main result (thm. \ref{main_thm}).

\subsection{A preliminary example}
\label{sec:an-intr-example}

Consider the polynomial ring $A = \CC [x_1,x_2,x_3]$ with the
standard grading.
The ideal $I=(x_1,x_1+x_2,x_1+x_3)$ defines the origin in the
affine space $\mathbb{A}^3$, which has codimension 3.
Moreover, $I$ is generated by 3 elements,
so the generators form a regular sequence.
It is well known that $I$ is resolved by a Koszul complex,
which can be written as:
\[0\to F_3 \xrightarrow{ \left(
    \begin{smallmatrix}
      x_1+x_3\\
      -x_1-x_2\\
      x_1
    \end{smallmatrix}
  \right) } F_2 \xrightarrow{ \left(
    \begin{smallmatrix}
      -x_1 -x_2 & -x_1 -x_3 & 0 \\
      x_1 & 0 & -x_1 -x_3\\
      0 & x_1 & x_1 +x_2
    \end{smallmatrix}
  \right) } F_1 \xrightarrow{ \left(
    \begin{smallmatrix}
      x_1 & x_1 + x_2 & x_1 + x_3
    \end{smallmatrix}
  \right) } F_0
\]
The free module $F_i$ is isomorphic to $\bigwedge^i A(-1)^3$.

The vector space spanned by the variables $x_1,x_2,x_3$ is
isomorphic to $\CC^3$ and therefore we can identify $A$ with the
symmetric algebra $\Sym (\CC^3)$.
Note that the group $\GL_3 (\CC)$ acts naturally on $\CC^3$ and this
action extends to $A$ in a manner that preserves degrees and
multiplication.
Moreover, the ideal $I$ is stable under the action of $\GL_3 (\CC)$.
This might be more evident from a geometric perspective as the
origin of $\mathbb{A}^3$ is fixed by the dual action of $\GL_3 (\CC)$
on $\mathbb{A}^3$.
Because $I$ is stable for the action of $\GL_3 (\CC)$, the group
action extends to a minimal free resolution of $A/I$;
in particular, the action extends to the Koszul complex above.
The module $F_i$ is also isomorphic to $\bigwedge^i \CC^3 \otimes_\CC
A(-i)$.
This form emphasizes the presence of the group action by expressing
the free module as the tensor product of a finite dimensional
representation of $\GL_3 (\CC)$ and a free $A$-module of rank one.

A computer can be taught to calculate a minimal free resolution
which provides such information as the ranks of the free modules
in the complex, as well as the degrees they are generated in.
For example, computing a minimal free resolution of $A/I$ would
give the free modules $F_0, F_1, F_2, F_3$ with ranks $1, 3, 3, 1$
and generated in degrees $0, 1, 2, 3$.
But how can a computer algebra system be taught to recognize the
action of $\GL_3 (\CC)$ on this complex?
In other words, how can a piece of software identify
the representations appearing in our complex as the exterior
powers of $\CC^3$?

To illustrate our method, we recall the concept of weight in a 
representation.
Consider the subgroup $T$ of diagonal matrices in $\GL_3 (\CC)$.
If $e_1,e_2,e_3$ are the vectors in the coordinate basis of $\CC^3$,
then, for $i\in\{1,2,3\}$,
\[
\begin{pmatrix}
  t_1 & 0 & 0\\
  0 & t_2 & 0\\
  0 & 0 & t_3
\end{pmatrix}
e_i = t_i e_i.
\]
Notice how $e_i$ is an eigenvector for any diagonal matrix,
the eigenvalue being a function of $t_1,t_2,t_3$.
To give another example, consider the vector space
$\bigwedge^2 \CC^3$ with the $\GL_3 (\CC)$ action.
For $1\leq i<j\leq 3$,
\[
\begin{pmatrix}
  t_1 & 0 & 0\\
  0 & t_2 & 0\\
  0 & 0 & t_3
\end{pmatrix}
e_i \wedge e_j = t_i t_j e_i\wedge e_j.
\]
Again we see that $e_i\wedge e_j$ is an eigenvector for any diagonal
matrix, and the eigenvalue depends on $t_1,t_2,t_3$.
In general, we refer to these eigenvectors as weight vectors.
Every finite dimensional representation of $\GL_3 (\CC)$ has a 
basis of weight vectors. Moreover, the eigenvalue of a weight
vector is always a monomial $t_1^{\alpha_1} t_2^{\alpha_2} t_3^{\alpha_3}$,
and the tuple $(\alpha_1,\alpha_2,\alpha_3)$ is called the weight of
the weight vector.
Weights identify a representation up to isomorphism.
For example, our computation above shows that $e_1,e_2,e_3$ are a
basis of weight vectors of $\CC^3$ with weights $(1,0,0)$, $(0,1,0)$, $(0,0,1)$. 
The theory says any other representation with these three weights is
isomorphic to $\CC^3$.
Similarly, $e_1 \wedge e_2$, $e_1\wedge e_3$, $e_2\wedge e_3$ are a basis
of weight vectors of $\bigwedge^2 \CC^3$ with weights
$(1,1,0)$, $(1,0,1)$, $(0,1,1)$, and any other representation with these
weights is isomorphic to $\bigwedge^2 \CC^3$.
Since weights are represented by tuples of integers, they can
easily be handled by software.
But how do weights figure in our free resolution?

Since we are resolving $A/I$, we know the module $F_0$ is simply $A$.
The free module $A$ has a single basis vector, namely $1_A$, on which
$\GL_3 (\CC)$ acts trivially.
The group $T$ also acts trivially on $1_A$, hence $1_A$ is a weight
vector with weight $(0,0,0)$.
Next we can attach weights to the variables in $A$. In fact, if we
identify $x_1,x_2,x_3$ with the basis $e_1,e_2,e_3$ of $\CC^3$, we
see that the variables in $A$ are weight vectors with weights $(1,0,0)$, $(0,1,0)$ and $(0,0,1)$.
We will use this fact to recover the weights for the module
$F_1$ by propagating the weight $(0,0,0)$ in $F_0$ along the first
map of the complex.

In order to do that, we need some tools from computational
commutative algebra.
We equip $A$ with the degree reverse
lexicographic term ordering such that $x_1 > x_2 > x_3$.
This allows us to calculate leading terms of polynomials and
Gr\"obner bases.
Now the first map in the complex is represented by the matrix
\[M_1 =
\begin{pmatrix}
  x_1 & x_1 + x_2 & x_1 + x_3
\end{pmatrix}.
\]
Each column has one entry corresponding to a generator of $I$.
If we compute a Gr\"obner basis of $I$ and replace each column in
the matrix above with an element of the Gr\"obner basis we obtain
\[G_1 = 
\begin{pmatrix}
  x_3 & x_2 & x_1
\end{pmatrix}.
\]
Alternatively, we can think $G_1$ is obtained from $M_1$ after
operating the change of basis given by the matrix
\[C_1 =
\begin{pmatrix}
  -1 & -1 & 1\\
  0 & 1 & 0\\
  1 & 0 & 0
\end{pmatrix}
\]
in the domain $F_1$.
As we mentioned earlier, $F_0$, the codomain of $G_1$, has a basis
consisting of a single weight vector with weight $(0,0,0)$.
We think of the weight $(0,0,0)$ as attached to the single row 
of $G_1$.
Observe how each column of $G_1$ contains a single variable,
and each variable is a weight vector.
To propagate the weights through this map we proceed as follows.
For each column, add the weight of the row $(0,0,0)$ to the weight
of the variable in the column.
The process gives the following weights:
\begin{align*}
  &(0,0,0) + (0,0,1) = (0,0,1),\\
  &(0,0,0) + (0,1,0) = (0,1,0),\\
  &(0,0,0) + (1,0,0) = (1,0,0).
\end{align*}
For example, the first line comes from adding the weight $(0,0,0)$
of the row to the weight $(0,0,1)$ of the variable $x_3$ found in
the first column.
We think of these weights as attached to the columns of $G_1$.
As such, these weights correspond to some weight vectors in $F_1$,
the domain of $G_1$.
As a free module, $F_1$ is isomorphic to $V \otimes_\CC A(-1)$,
where $V$ is a representation of $\GL_3 (\CC)$ of dimension 3.
At the same time, we previously mentioned that there is only one 
representation $V$ with the weights $(1,0,0)$, $(0,1,0)$ and $(0,0,1)$,
namely $\CC^3$.
Therefore $F_1 \cong \CC^3 \otimes_\CC A(-1)$.

Now we would like to apply the same process to the next map in the
complex.
First we notice that, since we changed basis in $F_1$, the same
change of basis should be applied to the second map in the complex.
To achieve this, we multiply the second map on the left by the inverse of
$C_1$, which gives
\begin{align*}
  M_2 &= C_1^{-1}
  \begin{pmatrix}
    -x_1 -x_2 & -x_1 -x_3 & 0 \\
    x_1 & 0 & -x_1 -x_3\\
    0 & x_1 & x_1 +x_2
  \end{pmatrix}
  =\\
  &=
  \begin{pmatrix}
    0 & 0 & 1\\
    0 & 1 & 0\\
    1 & 1 & 1
  \end{pmatrix}
  \begin{pmatrix}
    -x_1 -x_2 & -x_1 -x_3 & 0 \\
    x_1 & 0 & -x_1 -x_3\\
    0 & x_1 & x_1 +x_2
  \end{pmatrix}
  =\\
  &=
  \begin{pmatrix}
    0 & x_1 & x_1+x_2\\
    x_1 & 0 & -x_1-x_3\\
    -x_2 & -x_3 & x_2-x_3
  \end{pmatrix}.
\end{align*}
The weights $(0,0,1),(0,1,0),(1,0,0)$ that were attached to the
columns of $G_1$ are now also attached to the rows of $M_2$,
since the basis used is the same.
Next we compute a Gr\"obner basis of the image of $M_2$ and arrange
its elements in the matrix
\[G_2=
\begin{pmatrix}
  x_2 & x_1 & 0\\
  -x_3 & 0 & x_1\\
  0 & -x_3 & -x_2
\end{pmatrix}.
\]
Implicit in this process is the choice of a module term ordering
on $F_1$, i.e. a way to sort terms in the free module $F_1$.
Our choice is the \emph{term over position up} ordering.
We illustrate with an example.
The last column of $M_2$ can be written as a linear combination of
terms as
\[
\begin{pmatrix}
  x_1+x_2\\
  -x_1-x_3\\
  x_2-x_3
\end{pmatrix}
=
-
\begin{pmatrix}
  0\\
  x_1\\
  0
\end{pmatrix}
+
\begin{pmatrix}
  x_1\\
  0\\
  0
\end{pmatrix}
+
\begin{pmatrix}
  0\\
  0\\
  x_2
\end{pmatrix}
+
\begin{pmatrix}
  x_2\\
  0\\
  0
\end{pmatrix}
-
\begin{pmatrix}
  0\\
  0\\
  x_3
\end{pmatrix}
-
\begin{pmatrix}
  0\\
  x_3\\
  0
\end{pmatrix}.
\]
The terms on the right hand side of the equality are in decreasing
order from left to right, the first one being the leading term.
To compare two terms, we compare their non zero entries using the
degree reverse lexicographic order.
If the non zero entry is the same, then we compare their position
within the column; the one closer to the bottom wins.
Equivalently, we could say $G_2$ is obtained from $M_2$ by
applying the change of basis given by the matrix
\[C_2=
\begin{pmatrix}
  1 & 0 & 1\\
  -1 & 1 & 0\\
  1 & 0 & 0
\end{pmatrix}
\]
in the domain $F_2$.
Notice that the basis for the codomain $F_1$ has not changed,
thus we can still think of the weights $(0,0,1),(0,1,0),(1,0,0)$ being attached to the rows of $G_2$.
To propagate weights along the map $G_2$ we proceed as follows.
First we identify the leading term of each column of $G_2$:
\[
\begin{pmatrix}
  x_2 & x_1 & 0\\
  0 & 0 & x_1\\
  0 & 0 & 0
\end{pmatrix}.
\]
Then we locate the non zero term in each column and add the weight attached to its row to the weight of the term.
The process gives the following weights:
\begin{align*}
  &(0,0,1) + (0,1,0) = (0,1,1),\\
  &(0,0,1) + (1,0,0) = (1,0,1),\\
  &(0,1,0) + (1,0,0) = (1,1,0);
\end{align*}
for example, the first line comes from the weight attached to the
first row, namely $(0,0,1)$, plus the weight $(0,1,0)$ of $x_2$.
Now we can think of the weights $(0,1,1)$, $(1,0,1)$, $(1,1,0)$ as
being attached to the columns of $G_2$, so they must correspond
to some weight vectors in $F_2$.
As observed earlier, the only representation of $\GL_3 (\CC)$
with these weights is $\bigwedge^2 \CC^3$.
Therefore $F_2 \cong \bigwedge^2 \CC^3 \otimes_\CC A(-2)$.

Finally the process extends to the last map of the complex.
The matrix of this map, as we described it at the beginning of this 
section, must be expressed in the new basis we picked for $F_2$;
this is achieved by multiplying on the left by the inverse of $C_2$:
\[M_3 = C_2^{-1}
\begin{pmatrix}
  x_1+x_3\\
  -x_1-x_2\\
  x_1
\end{pmatrix}
=
\begin{pmatrix}
  0 & 0 & 1\\
  0 & 1 & 1\\
  1 & 0 & -1
\end{pmatrix}
\begin{pmatrix}
  x_1+x_3\\
  -x_1-x_2\\
  x_1
\end{pmatrix}
=
\begin{pmatrix}
  x_1\\
  -x_2\\
  x_3
\end{pmatrix}.
\]
Now the image of $M_3$ is generated by a single column, so a
Gr\"obner basis is given by $G_3 = M_3$ itself.
Its leading term is the column:
\[
\begin{pmatrix}
  x_1\\
  0\\
  0
\end{pmatrix}.
\]
From our previous step, we know the weight attached to the first row
is the same as the weight that was attached to the first column of
$G_2$, namely $(0,1,1)$.
By adding this weight to the weight of $x_1$, we obtain:
\[(0,1,1) + (1,0,0) = (1,1,1).\]
Thus the weight $(1,1,1)$ is attached to the column of $G_3$ and
corresponds to a weight vector of $F_3$.
The unique representation of $\GL_3 (\CC)$ with weight $(1,1,1)$ is
$\bigwedge^3 \CC^3$.
Therefore $F_3 \cong \bigwedge^3 \CC^3 \otimes_\CC A(-3)$.

As you see, this process allowed us to propagate weights along
the maps in a minimal free resolution which enabled us to
identify each free module as a group representation.
What makes these kind of weight computations feasible?
We develop the theory of weight propagation in section
\ref{propagation}.
Here we state our main result in loose terms.
\begin{teo}[Informal statement of theorem \ref{main_thm}]
Consider a matrix $M$ such as those appearing in a minimal free resolution of an $A$-module and assume this matrix is compatible
with the action of the group $T$.
If each row of $M$ comes with a weight attached to it,
then, after a suitable change of basis in the domain of $M$,
we can attach a weight to each column of the matrix using the
following recipe:
\begin{align*}
  &{}\text{weight of column } c =\\
  ={}&{}\text{weight of row containing the leading term of } c \:+\\
  +{\:}&{}\text{weight of the leading term of }c.
\end{align*}
Moreover, the weights found this way completely describe
the domain of $M$ as a representation of $T$.
\end{teo}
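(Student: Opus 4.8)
The plan is to reduce the statement to the case of bases of weight vectors handled in proposition~\ref{weight_in_homogeneous_basis}, by running a Gr\"obner basis computation on the image of $M$ and transporting the resulting weights back to the domain via minimality. Write $M\colon E\to F$, so that the rows of $M$ correspond to the given weight basis $\tilde f_1,\dots,\tilde f_s$ of the codomain $F$ and the columns to a basis of the domain $E$ whose weights are sought; recall also that, after a linear change of coordinates if necessary, the variables of $A$ are weight vectors, so that proposition~\ref{weight_of_poly} is available.

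First I would fix, abstractly, a basis of weight vectors $\tilde e_1,\dots,\tilde e_r$ of $E$, which exists because $E/\mathfrak m E$ is a representation of a torus, hence semisimple, and $E\cong(E/\mathfrak m E)\otimes_\KK A$. Expressing $M$ in the bases $\tilde e_\bullet$ and $\tilde f_\bullet$, proposition~\ref{weight_in_homogeneous_basis} gives that each entry $a_{ij}$ is a weight vector of $A$ with $\weight(\tilde e_j)=\weight(a_{ij})+\weight(\tilde f_i)$ for \emph{every} nonzero entry of column $j$; in particular each $M(\tilde e_j)$ is a weight vector of $F$ whose weight can be read off any one of its terms. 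Since $M$ minimally presents $N:=\im M\subseteq F$, the vectors $M(\tilde e_1),\dots,M(\tilde e_r)$ form a \emph{minimal} generating set of $N$ by weight vectors, so the multiset of their weights is exactly the multiset of weights of the representation $N/\mathfrak m N$; and $N/\mathfrak m N$ is $T$-isomorphic to $E/\mathfrak m E$ via the surjection induced by $M$. It therefore suffices to show that the recipe computes the weights of \emph{some} minimal generating set of $N$ by weight vectors.

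To produce such a set I would compute the reduced Gr\"obner basis of $N$ with respect to the chosen module term order. The decisive point is that, the variables of $A$ being weight vectors, every $t\in T$ rescales each monomial in each position of $F$ by a scalar; hence the term order is $T$-stable, in the sense that $\LT(t\cdot v)$ is a scalar multiple of $\LT(v)$ for all $v$ and all $t$. Since $N$ is $T$-stable, applying $t$ to its reduced Gr\"obner basis yields again a reduced Gr\"obner basis of $N$ with the same leading monomials, so uniqueness of the reduced Gr\"obner basis forces $t$ to fix each of its elements up to a scalar: each element is a weight vector. Pruning to a minimal generating set $g_1,\dots,g_r$ of weight vectors of $N$ --- which corresponds to a change of basis of the domain lying in $\GL_r(A)$, since two minimal homogeneous generating sets of a graded module differ by a matrix whose determinant is a nonzero scalar --- gives the matrix $G$ obtained from $M$ with columns $g_1,\dots,g_r$. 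As each $g_k$ is a weight vector, all of its terms share one weight, which is thus the weight of its leading term, namely the weight of the row carrying that leading term plus the weight of the leading monomial: precisely the stated recipe. By the previous paragraph, these weights describe $E/\mathfrak m E$, and hence $E$ as a representation of $T$.

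I expect the main obstacle to be the bookkeeping that ties these pieces together: checking that reducedness of a Gr\"obner basis really survives rescaling each element by a torus element (it does, because no monomial is displaced), and --- more subtly --- reconciling the fact that the reduced Gr\"obner basis lives in the codomain and may a priori have more than $r$ elements, whose preimages in $E$ need not be weight vectors, with the requirement of a change of basis in the domain producing exactly $r$ weighted columns. The conceptual crux is that the columns of $G$ need only be weight vectors of $F$, not of $E$; their weights are those of $N/\mathfrak m N$, which minimality of $M$ identifies with those of $E/\mathfrak m E$. Making the pruning, the graded invertibility of the change of basis, and the leading-term recipe all cohere is where the argument needs the most care.
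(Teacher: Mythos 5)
Your argument only treats the case in which the rows of $M$ correspond to a homogeneous basis of weight vectors of the codomain, and that assumption is precisely what theorem \ref{main_thm} is designed to avoid. In the intended setting (iterating along a resolution, as in the introduction's example), the weights ``attached to the rows'' are the weights of weight vectors $\tilde f_1,\ldots,\tilde f_s$ that are only related to the actual row basis $\mathcal{F}$ by an upper triangular change of basis (hypothesis \ref{h2}), while leading terms are taken with respect to $\mathcal{F}$ and a position up ordering (hypothesis \ref{h1}); example \ref{ex6} shows the recipe genuinely fails without such hypotheses. Your central lemma---that the term order is $T$-stable because ``every $t\in T$ rescales each monomial in each position of $F$ by a scalar,'' whence the reduced Gr\"obner basis of $\im M$ consists of weight vectors---uses the diagonal action of $T$ on $\TT^n\langle\tilde{\mathcal F}\rangle$ and breaks down when the positions are indexed by a non-weight basis $\mathcal F$: then $\tau$ does not act term-by-term, supports are not preserved, and uniqueness of the reduced Gr\"obner basis gives you nothing. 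The paper's substitute for this step is proposition \ref{weight_transfer} (triangularity plus position up forces the term $\hat t\tilde f_{\hat\jmath}$ to survive in the weight-basis expansion, so the recipe's sum is still correct) together with the leading-term comparison in Step 3 of the proof, via the Hilbert function of $\LT(\im\varphi)$ and uniqueness of minimal monomial generators.

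Relatedly, your proof does not yield conclusion \ref{t1}, the existence of an \emph{upper triangular} change of basis from the new domain basis to a weight basis of $E$. In your setup this is invisible because you assumed the codomain basis was already a weight basis, but \ref{t1} is exactly what licenses attaching the computed weights to the rows at the next step of the resolution, so without it the statement cannot be propagated as the theorem intends. Within the special case you do treat (rows are weight vectors), your route---reduced Gr\"obner basis elements are weight vectors by uniqueness under the torus action, prune to a minimal generating set, transfer weights through $N/\mathfrak m N\cong E/\mathfrak m E$ by minimality---is a genuinely different and essentially sound alternative to the paper's weight-space-by-weight-space construction (proposition \ref{gb_basis} and corollary \ref{nice_basis_wv}), and it recovers proposition \ref{summary_hbwv}; but it does not prove theorem \ref{main_thm} in the generality in which the paper needs and proves it.
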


The procedure can be carried out by a computer and will be codified
into an algorithm in section \ref{sec_algo_res}.

\section{Basic notions and notations}\label{basics}
We begin by recalling some concepts from commutative algebra (\S\ref{comm_alg}) and representation theory (\S\ref{rep_theory}). In sections \ref{interactions} and \ref{free_mod}, we illustrate some interactions and establish a few basic facts that will be used throughout this work.

\subsection{Commutative algebra}\label{comm_alg}
Our main source for (computational) commutative algebra are \cite{MR1790326,MR2159476}. Most of our notations are lifted from those volumes, with some minor additions that we introduce below, and many facts we cite can be found there.
Additional facts we may refer to can be found in \cite{MR1251956}.

Let $\KK$ be a field. Fix a polynomial ring $A=\KK [x_1,\ldots,x_n]$ with a positive $\ZZ^m$-grading in the sense of \cite[Defin. 4.2.4]{MR2159476}.
This means that:
\begin{itemize}[wide]
\item the degree of each variable $x_i$ is a non zero vector in
  $\ZZ^m$ and its first non zero component is positive;
\item when the degrees of the variables are written as column
  vectors and arranged in a matrix, the rows of such a matrix are
  linearly independent.
\end{itemize}
\begin{ex}
  Consider the polynomial ring $A=\CC [x,y]$ with the standard
  grading. Each variable has degree $1$ and the degrees
  can be arranged in a matrix
  \[
  \begin{pmatrix}
    1 & 1
  \end{pmatrix}
  \]
  with a single row. Hence the standard grading is a positive
  grading.

  Now suppose $x$ has degree $(1,0)$ and $y$ has degree $(0,1)$.
  Both degrees satisfy the first requirement and they give rise
  to the matrix
  \[
  \begin{pmatrix}
    1 & 0\\
    0 & 1
  \end{pmatrix}
  \]
  whose rows are linearly independent. Hence this is also a
  positive grading.
\end{ex}

One important consequence of positive gradings is that for every finitely generated graded $A$-module $M$ and $\forall d\in \ZZ^m$, the graded component $M_d$ is a finite dimensional $\KK$-vector space \cite[Prop. 4.1.19]{MR2159476}. All $A$-modules we consider are finitely generated and graded.

Every free $A$-module $F$ has a basis $\mathcal{F} = \{f_1,\ldots,f_s\}$ consisting (necessarily) of homogeneous elements. In order to distinguish the set $\mathcal{F}$ from the basis of a $\KK$-vector subspace of $F$, we refer to it as a `homogeneous basis'. Kreuzer and Robbiano denote $\TT^n \langle f_1,\ldots,f_s\rangle$ the set of all terms $tf_i$ in $F$, where $t\in \TT^n$ is a term in $A$; we use $\TT^n \langle \mathcal{F} \rangle$ to denote the same set. At times, we may need to use more than one homogeneous basis of the same free module $F$; because of this, we denote the support of an element $f\in F$ by $\Supp_{\mathcal{F}} (f)$, with an explicit dependence on the homogeneous basis used.

\begin{ex}\label{ex_terms1}
Let $A=\CC [x,y]$ and consider the free $A$-module $F=A^4$,
with homogeneous basis $f_1 = (1,0,0,0)$, $f_2 = (0,1,0,0)$,
$f_3 = (0,0,1,0)$, $f_4 = (0,0,0,1)$ (the coordinate basis).
We set $\mathcal{F}= \{f_1,f_2,f_3,f_4\}$.
Consider the element of $F$ defined by
\[f = y f_1 + x f_2 + x f_3 + y f_4.\]
The right hand side shows how $f$ is written as a linear
combination of terms in $\mathbb{T}^4 \langle \mathcal{F} \rangle$.
It follows that $\Supp_{\mathcal{F}} (f) = \{y f_1,x f_2,x f_3,y f_4\}$.
Consider now the homogeneous basis $\mathcal{F}' = \{f'_1,f'_2,f'_3,f'_4\}$ of
$F$ defined by $f'_1 = f_1 + f_4$, $f'_2 = f_2 + f_3$, $f'_3 = f_3$,
$f'_4 = f_4$.
Then
\[f = y f'_1 + x f'_2\]
and $\Supp_{\mathcal{F}'} (f) = \{y f'_1,x f'_2\}$.
\end{ex}

\begin{defin}\label{module_term_orderings}
Let $\sigma$ denote a term ordering on $\TT^n$.
Let $F$ be a free $A$-module and $\mathcal{F} = \{f_1,\ldots,f_s\}$ a homogeneous basis of $F$.
We define some module term orderings on $\TT^n \langle \mathcal{F}\rangle$ as follows.
For $t_1,t_2\in \TT^n$ and $i,j \in \{1,\ldots,s\}$,
\begin{itemize}[wide]
\item \emph{term over position up}: $t_1 f_i \geq t_2 f_j \iff t_1 >_\sigma t_2$ or $t_1=t_2$ and $i>j$;
\item \emph{position up over term}: $t_1 f_i \geq t_2 f_j \iff i>j$ or $i=j$ and $t_1 >_\sigma t_2$;
\item \emph{term over position down}: $t_1 f_i \geq t_2 f_j \iff t_1 >_\sigma t_2$ or $t_1=t_2$ and $i<j$;
\item \emph{position down over term}: $t_1 f_i \geq t_2 f_j \iff i<j$ or $i=j$ and $t_1 >_\sigma t_2$.
\end{itemize}
We refer to the first two orderings together as \emph{position up} module term orderings, and to the last two as \emph{position down} module term orderings.
\end{defin}

Let $F$ be a free $A$-module and $\mathcal{F}$ a homogeneous
basis of $F$.
If $\mathbb{T}^n \langle \mathcal{F}\rangle$ is equipped with one
of the module term orderings above, then the leading term of an
element $f\in F$ is the maximum of $\Supp_{\mathcal{F}} (f)$ and will
be denoted $\LT (f)$.
\begin{ex}
  Consider the element $f$ from example \ref{ex_terms1}.
  Suppose the set of terms of $A$, i.e. $\mathbb{T}^4$, has
  the degree reverse lexicographic term ordering and $x>y$.
  Depending on the module term ordering on $\mathbb{T}^4 \langle
  \mathcal{F} \rangle$, $f$ may have a different leading term.
  In fact:
  \begin{itemize}[wide]
  \item if $\mathbb{T}^4 \langle \mathcal{F} \rangle$ has the
    term over position up ordering, then $\LT (f) = x f_3$;
  \item if $\mathbb{T}^4 \langle \mathcal{F} \rangle$ has the
    position up over term ordering, then $\LT (f) = y f_4$;
  \item if $\mathbb{T}^4 \langle \mathcal{F} \rangle$ has the
    term over position down ordering, then $\LT (f) = x f_2$;
  \item if $\mathbb{T}^4 \langle \mathcal{F} \rangle$ has the
    position down over term ordering, then $\LT (f) = y f_1$.
  \end{itemize}
\end{ex}

Any term ordering on $\TT^n$ is allowed in what follows. The term ordering will be fixed and so we will drop all references to it. As for module term orderings, we only allow position up/down orderings.

In some parts of this work, we will be using the graded hom functor on the category of graded $A$-modules. The definition can be found in \cite[p. 33]{MR1251956} and the notation is $\sideset{^*}{} \Hom (-,-)$.

\subsection{Representation theory of tori}\label{rep_theory}
This section contains a brief summary of the representation theory of tori. Our main reference on the subject is \cite[\S11.4, Ch. 16]{MR0396773}. A few facts are presented in the form of propositions so that they can be referenced later. The proofs are standard and typically present in textbooks on the subject,  therefore they will be omitted.

An algebraic torus over $\KK$ is an algebraic group $T$ which is isomorphic to $\KK^\times \times \ldots \times \KK^\times$, a finite direct product of copies of the multiplicative group of the field $\KK$. The character group of $T$, denoted $X(T)$, is the set of all algebraic group homomorphisms $\chi \colon T\rightarrow \KK^\times$. The set $X(T)$ is an abelian group under pointwise multiplication.

A representation of $T$ over $\KK$ is a vector space with a $\KK$-linear action of $T$. For every finite dimensional representation $V$ of $T$ over $\KK$, there exists a unique decomposition $V = \bigoplus_{\chi \in X(T)} V_\chi$, where $V_\chi = \{v\in V \mid \forall \tau\in T, \tau\cdot v = \chi (\tau) v\}$. A character $\chi\in X(T)$ such that $V_\chi \neq 0$ is called a weight of $V$ and $\dim (V_\chi)$ is called the multiplicity of $\chi$ in $V$. The list of weights of $V$, considered with their multiplicity, uniquely determines $V$ as a representation of $T$. Each $V_\chi$ is called a weight space of $V$ and its non zero elements are called weight vectors with weight $\chi$.

\begin{ex}\label{ex_weight1}
  Let
  \[T=\left\{
    \begin{pmatrix}
      t_1 & 0\\
      0 & t_2
    \end{pmatrix}
    \middle\vert
    t_1,t_2\in\CC^\times\right\}
  \]
  with matrix multiplication as group operation.
  Then $T \cong (\CC^\times)^2$ so it is a torus.

  Let $V=\CC^2$ with coordinate basis $\{v_1,v_2\}$. We have:
  \begin{align*}
    &
    \begin{pmatrix}
      t_1 & 0\\
      0 & t_2
    \end{pmatrix}
    v_1 =
    \begin{pmatrix}
      t_1 & 0\\
      0 & t_2
    \end{pmatrix}
    \begin{pmatrix}
      1\\ 0
    \end{pmatrix}
    =
    \begin{pmatrix}
      t_1\\ 0
    \end{pmatrix}
    = t_1 v_1,\\
    &
    \begin{pmatrix}
      t_1 & 0\\
      0 & t_2
    \end{pmatrix}
    v_2 =
    \begin{pmatrix}
      t_1 & 0\\
      0 & t_2
    \end{pmatrix}
    \begin{pmatrix}
      0\\ 1
    \end{pmatrix}
    =
    \begin{pmatrix}
      0\\ t_2
    \end{pmatrix}
    = t_2 v_2.
  \end{align*}
  This shows $v_1$ and $v_2$ are both weight vectors.

  Define the functions
  \begin{align*}
    &\chi_{(1,0)} \colon T \to \CC^\times,
    \quad
    \begin{pmatrix}
      t_1 & 0\\
      0 & t_2
    \end{pmatrix}
    \mapsto t_1,\\
    &\chi_{(0,1)} \colon T \to \CC^\times,
    \quad
    \begin{pmatrix}
      t_1 & 0\\
      0 & t_2
    \end{pmatrix}
    \mapsto t_2.
  \end{align*}
  Both functions are characters of $T$.
  Moreover, $v_1$ is a weight vector of $V$ with weight $\chi_{(1,0)}$
  and $v_2$ is a weight vector of $V$ with weight $\chi_{(0,1)}$.
\end{ex}

\begin{ex}\label{ex_weight2}
  Consider the setup of example \ref{ex_weight1} and
  let $W = \Sym^2 (V)$, the second symmetric power of $V$.
  The set $\{v_1^2,v_1 v_2,v_2^2\}$ is a basis of $W$.
  If we take the element
  \[\tau = 
    \begin{pmatrix}
      t_1 & 0\\
      0 & t_2
    \end{pmatrix}
    \in T,
  \]
  then we have
  \begin{align*}
    &\tau \cdot v_1^2 = (\tau \cdot v_1)^2 = (t_1 v_1)^2 = t_1^2 v_1^2,\\
    &\tau \cdot v_1 v_2 = (\tau \cdot v_1) (\tau \cdot v_2) = (t_1 v_1) (t_2 v_2) = t_1 t_2 v_1 v_2,\\
    &\tau \cdot v_2^2 = (\tau \cdot v_2)^2 = (t_2 v_2)^2 = t_2^2 v_2^2.
  \end{align*}
  The functions
  \begin{align*}
    &\chi_{(2,0)} \colon T \to \CC^\times,
    \quad
    \begin{pmatrix}
      t_1 & 0\\
      0 & t_2
    \end{pmatrix}
    \mapsto t_1^2,\\
    &\chi_{(1,1)} \colon T \to \CC^\times,
    \quad
    \begin{pmatrix}
      t_1 & 0\\
      0 & t_2
    \end{pmatrix}
    \mapsto t_1 t_2,\\
    &\chi_{(0,2)} \colon T \to \CC^\times,
    \quad
    \begin{pmatrix}
      t_1 & 0\\
      0 & t_2
    \end{pmatrix}
    \mapsto t_2^2
  \end{align*}
  are characters of $T$, and
  \begin{itemize}[wide]
  \item $v_1^2$ is a weight vector of $W$ with weight $\chi_{(2,0)}$,
  \item $v_1 v_2$ is a weight vector of $W$ with weight $\chi_{(1,1)}$,
  \item $v_2^2$ is a weight vector of $W$ with weight $\chi_{(0,2)}$.
  \end{itemize}
\end{ex}

If $T \cong (\KK^\times)^n$, then $X(T)\cong \ZZ^n$ as abelian groups. This is important from a computational standpoint because characters can be represented by elements of $\ZZ^n$, i.e. by lists of integers. If $v\in V$ is a weight vector, we write $\weight (v)$ to identify the weight of $v$ as an element of $\ZZ^n$. When operating with weights as elements of $\ZZ^n$, we use an additive notation for the group operation.

\begin{ex}\label{ex_weight3}
  In example \ref{ex_weight1}, $\weight (v_1) = (1,0)$ and 
  $\weight (v_2) = (0,1)$.
  In example \ref{ex_weight2}, $\weight (v_1^2) = (2,0)$,
  $\weight (v_1 v_2) = (1,1)$ and $\weight (v_2) = (0,2)$.
\end{ex}

If $V,W$ are two finite dimensional representations of $T$ over $\KK$, then so is the tensor product $V\otimes_\KK W$ with the action $\tau \cdot (v\otimes w) = (\tau \cdot v) \otimes (\tau \cdot w)$, $\forall v \in V$, $\forall w \in W$, $\forall \tau \in T$.

\begin{prop}\label{weight_in_tensor_product}
If $v\in V$ and $w\in W$ are weight vectors, then so is $v\otimes w\in V\otimes_\KK W$ and $\weight (v\otimes w) = \weight (v) + \weight (w)$.
\end{prop}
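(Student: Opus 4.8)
The plan is to verify the claim directly from the definitions, with no machinery beyond the decomposition $V = \bigoplus_{\chi} V_\chi$ recalled in \S\ref{rep_theory} and the description of the $T$-action on a tensor product. Write $\chi$ for the weight of $v$ and $\psi$ for the weight of $w$, viewed (multiplicatively) as characters in $X(T)$. First I would observe that $v\otimes w$ is non-zero: weight vectors are by definition non-zero elements of their weight spaces, and the tensor product is taken over the field $\KK$, so $v\neq 0$ and $w\neq 0$ force $v\otimes w\neq 0$. Hence it makes sense to ask which weight space $v\otimes w$ lies in.

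Next I would compute the action of an arbitrary $\tau\in T$ on $v\otimes w$ using the formula $\tau\cdot(v\otimes w) = (\tau\cdot v)\otimes(\tau\cdot w)$ together with the fact that $v$ and $w$ are weight vectors:
\[
\tau\cdot(v\otimes w) = (\tau\cdot v)\otimes(\tau\cdot w) = \bigl(\chi(\tau)\,v\bigr)\otimes\bigl(\psi(\tau)\,w\bigr) = \chi(\tau)\psi(\tau)\,(v\otimes w),
\]
where the last equality is $\KK$-bilinearity of $\otimes$. Since the product in $X(T)$ is defined pointwise, $\chi(\tau)\psi(\tau) = (\chi\psi)(\tau)$, and $\tau$ was arbitrary, this exhibits $v\otimes w$ as an element of $(V\otimes_\KK W)_{\chi\psi}$; being non-zero, it is a weight vector with weight $\chi\psi$.

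Finally I would translate the conclusion into the additive notation on $\ZZ^n\cong X(T)$ used throughout the paper: under the isomorphism identifying characters with integer vectors, the product $\chi\psi$ of characters corresponds to the sum of the associated vectors, i.e. $\weight(v\otimes w) = \weight(v)+\weight(w)$. I do not expect any genuine obstacle here; the only point meriting a word of care is the non-vanishing of $v\otimes w$, which is why the argument is carried out over a field, and the bookkeeping switch between the multiplicative group law on $X(T)$ and the additive one on $\ZZ^n$.
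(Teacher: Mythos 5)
Your argument is correct and is precisely the standard computation the paper alludes to when it omits these proofs as textbook facts: act by an arbitrary $\tau\in T$, use $\tau\cdot(v\otimes w)=(\tau\cdot v)\otimes(\tau\cdot w)$ and bilinearity to land in $(V\otimes_\KK W)_{\chi\psi}$, note $v\otimes w\neq 0$ over a field, and convert the pointwise product of characters to the additive notation on $\ZZ^n$. Nothing is missing.
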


\begin{ex}\label{ex_weight4}
  Building upon example \ref{ex_weight1}, consider the 
  representation $V\otimes V$ of $T$.
  If $\tau$ denotes the element
  \[
    \begin{pmatrix}
      t_1 & 0\\
      0 & t_2
    \end{pmatrix}
    \in T,
  \]
  then
  \begin{align*}
    &\tau \cdot (v_1 \otimes v_1) = (\tau \cdot v_1) \otimes
    (\tau \cdot v_1) = (t_1 v_1) \otimes (t_1 v_1) =
    t_1^2 (v_1 \otimes v_1),\\
    &\tau \cdot (v_1 \otimes v_2) = (\tau \cdot v_1) \otimes
    (\tau \cdot v_2) = (t_1 v_1) \otimes (t_2 v_2) =
    t_1 t_2 (v_1 \otimes v_2),
  \end{align*}
  so $\weight (v_1 \otimes v_1) = (2,0)$ and
  $\weight (v_1 \otimes v_2) = (1,1)$.
  Note that
  \begin{align*}
    &\weight (v_1) + \weight (v_1) = (1,0) + (1,0) = (2,0) =
    \weight (v_1 \otimes v_1),\\
    &\weight (v_1) + \weight (v_2) = (1,0) + (0,1) = (1,1) =
    \weight (v_1 \otimes v_2).
  \end{align*}
\end{ex}

\begin{defin}
Let $V,W$ be two representations of $T$ over $\KK$. A $\KK$-linear map $\varphi \colon V\rightarrow W$ is $T$-\emph{equivariant}, or a map of representations of $T$, if $\forall \tau\in T$, $\forall v\in V$ we have $\varphi (\tau \cdot v) = \tau \cdot \varphi (v)$.
\end{defin}
\begin{prop}\label{weight_in_equivariant_map}
Let $\varphi \colon V\rightarrow W$ be $T$-equivariant. If $v\in V$ is a weight vector and $\varphi (v)\neq 0$, then $\varphi (v)$ is a weight vector in $W$ and $\weight (\varphi (v)) = \weight (v)$.
\end{prop}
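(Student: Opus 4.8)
The plan is to decompose $v$ into weight components and exploit the uniqueness of the weight-space decomposition of $W$ together with the $T$-equivariance of $\varphi$. First I would write $v \in V_\chi$ for some single weight $\chi$, i.e. $\tau \cdot v = \chi(\tau) v$ for all $\tau \in T$; this is exactly what it means for $v$ to be a weight vector with $\weight(v) = \chi$. Then for every $\tau \in T$ I would compute, using equivariance,
\[
\tau \cdot \varphi(v) = \varphi(\tau \cdot v) = \varphi(\chi(\tau) v) = \chi(\tau)\, \varphi(v),
\]
where the last equality uses $\KK$-linearity of $\varphi$ and the fact that $\chi(\tau) \in \KK$ is a scalar. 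This shows $\varphi(v)$ lies in the weight space $W_\chi$.

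The remaining point is that $\varphi(v)$, being a nonzero element of $W_\chi$, is by definition a weight vector of $W$ with weight $\chi$; here I invoke the decomposition $W = \bigoplus_{\psi \in X(T)} W_\psi$ recalled in \S\ref{rep_theory}, whose nonzero homogeneous elements (those lying in a single summand $W_\psi$) are precisely the weight vectors. Since $\varphi(v) \neq 0$ by hypothesis and $\varphi(v) \in W_\chi$, we conclude $\varphi(v)$ is a weight vector and $\weight(\varphi(v)) = \chi = \weight(v)$, as claimed.

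There is essentially no obstacle here: the only subtlety worth flagging is the necessity of the hypothesis $\varphi(v) \neq 0$, since the zero vector lies in every weight space and is not assigned a weight. Translating to the additive notation $X(T) \cong \ZZ^n$ is immediate and requires no further argument. This mirrors, at the level of a single map, the computation in Proposition \ref{weight_in_tensor_product}, where scalars are again pulled through a multilinear structure.
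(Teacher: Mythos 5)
Your argument is correct and is exactly the standard one: the paper omits the proof of this proposition (stating in \S\ref{rep_theory} that such proofs are standard), and your computation $\tau\cdot\varphi(v)=\varphi(\tau\cdot v)=\chi(\tau)\varphi(v)$ followed by the observation that a nonzero element of $W_\chi$ is by definition a weight vector of weight $\chi$ is precisely that standard argument. Nothing is missing, and your remark that the hypothesis $\varphi(v)\neq 0$ is what prevents the conclusion from degenerating is the right point to flag.
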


\begin{ex}\label{ex_weight5}
  In the context of examples \ref{ex_weight1}, \ref{ex_weight2} and
  \ref{ex_weight4}, introduce a map
  \begin{align*}
    \varphi \colon V\otimes V &\longrightarrow \Sym^2 (V)\\
    v_i \otimes v_j &\longmapsto v_i v_j.
  \end{align*}
  Note that
  \begin{align*}
    \varphi (\tau \cdot (v_1 \otimes v_1)) &= \varphi (t_1^2 
    (v_1 \otimes v_1)) = t_1^2 \varphi (v_1 \otimes v_1) =\\
    &= t_1^2 v_1^2 = \tau \cdot v_1^2 = \tau \cdot \varphi 
    (v_1 \otimes v_1),\\
    \varphi (\tau \cdot (v_1 \otimes v_2)) &= \varphi (t_1 t_2 
    (v_1 \otimes v_2)) = t_1 t_2 \varphi (v_1 \otimes v_2) =\\
    &= t_1 t_2 v_1 v_2 = \tau \cdot v_1 v_2 = \tau \cdot \varphi 
    (v_1 \otimes v_2).
  \end{align*}
  A similar behavior holds for $v_2 \otimes v_1$ and
  $v_2 \otimes v_2$, so the map $\varphi$ is $T$-equivariant.
  Moreover,
  \begin{align*}
    &\weight (v_1 \otimes v_1) = (2,0) = \weight (v_1^2),\\
    &\weight (v_1 \otimes v_2) = (1,1) = \weight (v_1 v_2),
  \end{align*}
  and similarly for the other basis vectors.
\end{ex}

If $V$ is a representation of $T$, then the dual (or contragredient) representation is the vector space $V^* := \Hom_\KK (V,\KK)$. An element $\tau \in T$ acts on $V^*$ by $[\tau \cdot f](v) = f (\tau^{-1} \cdot v)$,  $\forall f\in V^*$, $\forall v\in V$.

\begin{prop}\label{weight_in_dual}
Let $V$ be a finite dimensional representation of $T$ with a basis of weight vectors $\{v_1,\ldots,v_r\}$. Then the dual basis $\{v_1^*,\ldots,v_r^*\}$ of $V^*$ is a basis of weight vectors and $\weight(v_i^*) = -\weight (v_i)$, $\forall i\in\{1,\ldots,r\}$.
\end{prop}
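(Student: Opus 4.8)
The plan is to compute the action of an arbitrary element $\tau\in T$ on each dual basis vector $v_i^*$ directly from the definition of the contragredient action, and to read off the weight. Write $\chi_i\in X(T)$ for the weight of $v_i$, so that $\tau\cdot v_i = \chi_i(\tau)v_i$ for all $\tau\in T$. Since the $v_j$ form a basis of $V$, a linear functional on $V$ is determined by its values on the $v_j$, so it suffices to evaluate $\tau\cdot v_i^*$ at each $v_j$.

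The key computation is
\[
[\tau\cdot v_i^*](v_j) = v_i^*(\tau^{-1}\cdot v_j) = v_i^*\bigl(\chi_j(\tau^{-1})v_j\bigr) = \chi_j(\tau^{-1})\,v_i^*(v_j) = \chi_j(\tau^{-1})\,\delta_{ij},
\]
using the definition of the $T$-action on $V^*$, the hypothesis that $v_j$ is a weight vector, linearity of $v_i^*$, and the definition of the dual basis. Because the term is nonzero only when $j=i$, the right-hand side equals $\chi_i(\tau^{-1})\,\delta_{ij} = \chi_i(\tau^{-1})\,v_i^*(v_j)$. Since this holds for every $j$, we conclude $\tau\cdot v_i^* = \chi_i(\tau^{-1})\,v_i^*$ for all $\tau\in T$. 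Finally, $\chi_i$ is a group homomorphism $T\to\KK^\times$, so $\chi_i(\tau^{-1}) = \chi_i(\tau)^{-1}$, i.e.\ $\chi_i(\tau^{-1})$ is the value at $\tau$ of the character $\chi_i^{-1}\in X(T)$. Hence each $v_i^*$ is a weight vector with weight $\chi_i^{-1}$; translating the group operation of $X(T)$ into the additive notation on $\ZZ^n$ gives $\weight(v_i^*) = -\weight(v_i)$, and since the $v_i^*$ are a basis of $V^*$, this is a basis of weight vectors.

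There is no real obstacle here: the argument is a short direct calculation. The only points requiring a modicum of care are keeping track of the inverse introduced by the contragredient action (and not confusing $\chi_i(\tau^{-1})$ with $\chi_i(\tau)^{-1}$ until one invokes that $\chi_i$ is a homomorphism) and remembering that weights are being recorded additively, so that multiplicative inversion of characters becomes negation of integer vectors.
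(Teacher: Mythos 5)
Your proof is correct: the direct evaluation of $\tau\cdot v_i^*$ on the basis $\{v_j\}$ via the contragredient action, with careful conversion of $\chi_i(\tau^{-1})$ to $\chi_i(\tau)^{-1}$ and then to additive notation, is exactly the standard argument the paper has in mind (it omits the proof as standard, and your computation mirrors its illustration in example \ref{ex_weight6}). No gaps.
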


\begin{ex}\label{ex_weight6}
  Using the setup of example \ref{ex_weight1}, consider
  the representation $V^*$ and the dual basis $\{v_1^*, v_2^*\}$.
  We have
  \begin{align*}
    &(\tau \cdot v_1^*) (v_1) = v_1^* (\tau^{-1} \cdot v_1) = 
    v_1^* (t_1^{-1} v_1) = t_1^{-1},\\
    &(\tau \cdot v_1^*) (v_2) = v_1^* (\tau^{-1} \cdot v_2) = 
    v_1^* (t_2^{-1} v_2) = 0.
  \end{align*}
  Thus $\tau \cdot v_1^* = t_1^{-1} v_1^*$ and therefore
  \[\weight (v_1^*) = (-1,0) = -\weight (v_1).\]
  Similarly $\tau \cdot v_2^* = t_2^{-1} v_2^*$ and
  \[\weight (v_2^*) = (0,-1) = -\weight (v_2).\]
\end{ex}

\subsection{Compatible actions of tori on rings and modules}\label{interactions}
Let $M$ be an $A$-module.
\begin{defin}
A $\KK$-linear action of $T$ on $M$ is \emph{compatible with grading} if $\forall \tau \in T$, $\forall d\in \ZZ^m$ we have $\tau \cdot M_d \subseteq M_d$.
\end{defin}

\begin{defin}
A $\KK$-linear action of $T$ on $M$ is \emph{compatible with multiplication} if $\forall d,d'\in \ZZ^m$ the map induced by multiplication on $A_d \otimes_\KK M_{d'} \rightarrow M_{d+d'}$ is $T$-equivariant. In other words $\forall \tau \in T$, $\forall a\in A_d$, $\forall m\in M_{d'}$ we have $\tau \cdot (am) = (\tau \cdot a) (\tau \cdot m)$.
\end{defin}

All actions we consider are compatible with grading and multiplication. Indeed we can introduce the category $\mod_{\righttoleftarrow T} A$ with
\begin{itemize}[wide]
\item objects: all finitely generated graded $A$-modules with a $\KK$-linear action of $T$ compatible with grading and multiplication;
\item maps: all homogeneous $A$-linear $T$-equivariant maps,
\end{itemize}
and work in this category unless otherwise stated.
\begin{remark}
For any group $G$ with a $\KK$-linear action on $A$ that is compatible with grading and multiplication, we could similarly define a category $\mod_{\righttoleftarrow G} A$.
\end{remark}

A homogeneous polynomial $p\in A$ of degree $d\in\ZZ^m$ is said to be a weight vector if it is a weight vector in $A_d$. We always assume that all variables in $A$ are weight vectors, which can always be obtained up to a linear change of variables in $A$.
\begin{prop}\label{weight_of_term}
If each $x_i\in A$ is a weight vector, then every term $t\in \TT^n$ is a weight vector of $A$. Moreover, if $t = x_1^{\alpha_1} \ldots x_n^{\alpha_n}$, then
\[\weight (t) = \sum_{i=1}^n \alpha_i \weight (x_i).\]
\end{prop}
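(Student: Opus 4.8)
The plan is to argue by induction on the total degree $|\alpha| = \alpha_1 + \dots + \alpha_n$ of the term $t = x_1^{\alpha_1}\cdots x_n^{\alpha_n}$, with the engine being a preliminary observation that a product of two weight vectors of $A$ is again a weight vector, whose weight is the sum of the two weights.

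First I would establish this product rule. Given nonzero weight vectors $p\in A_d$ and $q\in A_{d'}$, Proposition \ref{weight_in_tensor_product} shows that $p\otimes q \in A_d \otimes_\KK A_{d'}$ is a weight vector with $\weight(p\otimes q) = \weight(p) + \weight(q)$. Since the action of $T$ on $A$ is compatible with multiplication, the multiplication map $A_d \otimes_\KK A_{d'} \to A_{d+d'}$ is $T$-equivariant, so by Proposition \ref{weight_in_equivariant_map} its value $pq$ is either zero or a weight vector of weight $\weight(p) + \weight(q)$. As $A$ is an integral domain and $p, q$ are nonzero, $pq \neq 0$, hence $pq$ is a weight vector with $\weight(pq) = \weight(p) + \weight(q)$.

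For the base case $|\alpha| = 0$, I would check that $1_A$ is a weight vector of weight $0$: since $\tau \cdot 1_A = \tau \cdot (1_A \cdot 1_A) = (\tau\cdot 1_A)^2$ and $\tau$ acts invertibly on $A$, we get $\tau \cdot 1_A = 1_A$ for every $\tau\in T$, so $1_A$ spans a trivial subrepresentation, i.e. $\weight(1_A) = 0 = \sum_{i=1}^n 0\cdot\weight(x_i)$. For the inductive step, if $|\alpha| > 0$ choose $j$ with $\alpha_j > 0$ and write $t = x_j t'$ where $t' = x_j^{\alpha_j - 1}\prod_{i\neq j} x_i^{\alpha_i}$ has total degree $|\alpha| - 1$; by the inductive hypothesis $t'$ is a weight vector with $\weight(t') = \weight(x_j)(\alpha_j - 1) + \sum_{i\neq j}\alpha_i\weight(x_i)$, and since $x_j$ is a weight vector by assumption, the product rule yields that $t$ is a weight vector with $\weight(t) = \weight(x_j) + \weight(t') = \sum_{i=1}^n \alpha_i\weight(x_i)$.

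I do not anticipate a genuine obstacle here; the only point requiring slight care is the base case, namely that $T$ fixes $1_A$, which uses compatibility with multiplication together with the fact that $T$ acts by automorphisms, rather than any grading argument. Everything else is a mechanical consequence of Propositions \ref{weight_in_tensor_product} and \ref{weight_in_equivariant_map} and the integral domain property of $A$.
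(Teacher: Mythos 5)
Your proof is correct and takes essentially the same route as the paper: both reduce the claim to Propositions \ref{weight_in_tensor_product} and \ref{weight_in_equivariant_map} via the $T$-equivariance of multiplication, the paper doing it in one shot with the map $A_{d_1}^{\otimes \alpha_1}\otimes_\KK \cdots \otimes_\KK A_{d_n}^{\otimes \alpha_n}\to A_d$, while you unroll the same idea as an induction on total degree using a binary product rule. Your added checks — nonvanishing of the product via the domain property and the base case that $1_A$ has weight zero — merely make explicit details the paper leaves implicit.
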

\begin{proof}
Suppose the variable $x_i$ has degree $d_i\in\ZZ^m$. The degree of $t$ is $d = \sum_{i=1}^n \alpha_i d_i$. The map induced by multiplication on
\[A_{d_1}^{\otimes \alpha_1} \otimes_\KK \ldots \otimes_\KK A_{d_n}^{\otimes \alpha_n} \longrightarrow A_d\]
is $T$-equivariant because the action of $T$ on $A$ is compatible with multiplication. By proposition \ref{weight_in_equivariant_map}, $t$ is a weight vector in $A_d$ because it is the image of a tensor product of weight vectors under the map above. Then the formula for $\weight (t)$ follows from propositions \ref{weight_in_equivariant_map} and \ref{weight_in_tensor_product}.
\end{proof}

The previous proposition implies that every graded component $A_d$ of $A$ has a basis of weight vectors consisting of all terms of degree $d$, i.e. the elements of the set $\TT^n \cap A_d$. Moreover, if $\chi$ is a weight of $A_d$, then the weight space $(A_d)_\chi$ has a basis of weight vectors consisting of all terms of degree $d$ and weight $\chi$, i.e. the elements of the set $\TT^n \cap (A_d)_\chi$.

\begin{ex}\label{ex1}

Let $V=\CC^3$ and $G=\GL (V)$. Let $A=\Sym (V)$, the symmetric algebra over $V$; $A$ is a $\ZZ$-graded $\CC$-algebra with graded components $A_d = \Sym^d (V)$ given by the symmetric powers of $V$. The group $G$ has a natural $\CC$-linear action on each component $\Sym^d (V)$ which is compatible with multiplication.

By choosing a basis $\{v_1,v_2,v_3\}$ of $V$, we can identify $G$ with $\GL_3 (\CC)$. If $T$ denotes the maximal torus of $G$ corresponding to diagonal matrices in $\GL_3 (\CC)$, then the elements $v_1,v_2,v_3$ form a basis of weight vectors of $V$. Denoting $x_1,x_2,x_3$ the elements $v_1,v_2,v_3 \in \Sym^1 (V) = V$, we can also identify $A$ with the standard graded polynomial ring $\CC [x_1,x_2,x_3]$. For $g \in G$ and $x_1^{\alpha_1} x_2^{\alpha_2} x_3^{\alpha_3}$ a term in $A$, the action of $G$ is determined by
\[g \cdot x_1^{\alpha_1} x_2^{\alpha_2} x_3^{\alpha_3} = (g\cdot x_1)^{\alpha_1} (g\cdot x_2)^{\alpha_2} (g\cdot x_3)^{\alpha_3}.\]
Moreover, the term $x_1^{\alpha_1} x_2^{\alpha_2} x_3^{\alpha_3}$ is a weight vector with weight $(\alpha_1,\alpha_2,\alpha_3) \in \ZZ^3$. In particular, the variables $x_1,x_2,x_3$ have weight $(1,0,0)$, $(0,1,0)$ and $(0,0,1)$.
\end{ex}

\begin{prop}\label{weight_of_poly}
If $p\in A_d$ is a weight vector, then $\forall t\in \Supp (p)$ we have $\weight (t) =\weight (p)$.
\end{prop}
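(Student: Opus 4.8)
The plan is to use the canonical weight-space decomposition of the graded component $A_d$ together with the fact, established just above, that the terms of degree $d$ form a basis of weight vectors of $A_d$ that refines this decomposition. First I would write the decomposition $A_d = \bigoplus_{\chi \in X(T)} (A_d)_\chi$ as in \S\ref{rep_theory}, and recall from the paragraph following proposition \ref{weight_of_term} that each weight space $(A_d)_\chi$ has as a basis the set of terms $\TT^n \cap (A_d)_\chi$, i.e. the terms of degree $d$ whose weight (in the sense of proposition \ref{weight_of_term}) equals $\chi$. In particular, every term of degree $d$ lies in exactly one of the summands $(A_d)_\chi$, namely the one indexed by its own weight.

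Next I would expand $p$ in the basis of terms: since $p \in A_d$, we may write $p = \sum_{t \in \Supp(p)} c_t\, t$ with each $c_t \in \KK^\times$ and each $t \in \TT^n \cap A_d$. Grouping the terms according to which weight space they belong to gives $p = \sum_\chi p_\chi$ where $p_\chi$ collects those $c_t t$ with $\weight(t) = \chi$; this is precisely the decomposition of $p$ into its components in the $(A_d)_\chi$. Now invoke the hypothesis that $p$ itself is a weight vector, say of weight $\mu = \weight(p)$. By definition this means $p \in (A_d)_\mu$, so in the direct-sum decomposition $p = \sum_\chi p_\chi$ every component with $\chi \neq \mu$ must vanish. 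Since the $c_t$ are nonzero and the terms are linearly independent, $p_\chi = 0$ forces that no $t \in \Supp(p)$ has $\weight(t) = \chi$ for $\chi \neq \mu$. Hence every $t \in \Supp(p)$ has $\weight(t) = \mu = \weight(p)$, which is the claim.

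There is essentially no obstacle here: the statement is a direct consequence of the uniqueness of the weight-space decomposition and the fact that the monomial basis of $A_d$ is compatible with it, both already in hand from proposition \ref{weight_of_term} and its corollary. The only point requiring a word of care is making explicit that distinct terms of the same degree can have different weights in general, so that $\Supp(p)$ genuinely could a priori spread across several weight spaces; the content of the proposition is exactly that being a weight vector rules this out. I would phrase the argument to emphasize that the term expansion of $p$ is its weight decomposition read off directly, so that $p$ homogeneous-of-weight-$\mu$ is equivalent to $\Supp(p)$ being contained in the $\mu$-weight space.
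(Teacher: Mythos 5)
Your argument is correct and is essentially the same as the paper's: both expand $p$ in the basis of terms of $A_d$ (which are weight vectors by proposition \ref{weight_of_term}) and use the weight-space decomposition of $A_d$ to conclude that every term with nonzero coefficient must lie in the weight space containing $p$. The paper's version is just a more compressed statement of the same grouping/uniqueness argument you spell out.
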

\begin{proof}
Using the basis of terms of $A_d$, we can write
\[p = \sum_{t\in \TT^n\cap A_d} c_t t.\]
Suppose $p$ has weight $\chi$, so that $p\in (A_d)_\chi$. If $t\in \Supp (p)$, then $c_t \neq 0$; this forces $t\in (A_d)_\chi$, otherwise we would have $p \notin (A_d)_\chi$.
\end{proof}

\begin{ex}
  Let $A=\CC [x_1,x_2,x_3]$ with the standard grading.
  Let $T = \CC^\times$ act on $A$ by setting, $\forall t\in T$,
  $\forall p\in A$,
  \[t\cdot p (x_1,x_2,x_3) := p (t x_1,t x_2,t x_3).\]
  If $p\in A$ is homogeneous of degree $d$, then
  $t\cdot p = t^d p$, i.e. $p$ is a weight vector of weight $d$.
  By proposition \ref{weight_of_poly}, all terms in the support of
  $p$ must be weight vectors of the same degree.
  This is clear since all terms in the support of $p$ have
  degree $d$.
\end{ex}

Let $M$ be an object in $\mod_{\righttoleftarrow T} A$. A homogeneous element $m\in M$ of degree $d$ is a weight vector if it is a weight vector in $M_d$.
Let $\mathfrak{m}$ be the maximal ideal of $A$ generated by the variables $x_1,\ldots,x_n$. The quotient $M/\mathfrak{m} M$ is a finite dimensional graded $\KK$-vector space with the action $\tau \cdot (m+\mathfrak{m} M) = (\tau \cdot m)+\mathfrak{m} M$, $\forall m \in M$, $\forall \tau \in T$. Moreover, it is an object in $\mod_{\righttoleftarrow T} A$. Observe that if $m$ is a weight vector in $M$ and $m+\mathfrak{m} M$ is non zero, then $m+\mathfrak{m} M$ is a weight vector in $M/\mathfrak{m} M$.

\begin{ex}\label{ex2}
In the setting of example \ref{ex1}, the ideal $\mathfrak{m} = (x_1,x_2,x_3)$ and the quotient $A/\mathfrak{m}$ are examples of modules in $\mod_{\righttoleftarrow T} A$. From the point of view of representation theory, $\mathfrak{m}$ is generated by a copy of $V$ in degree 1, whereas $A/\mathfrak{m}$ is the trivial representation of $G$ in degree 0.
\end{ex}

If $M,N$ are objects in $\mod_{\righttoleftarrow T} A$, there is a natural action of $T$ on $\Hom_A (M,N)$ given by $[\tau \cdot \psi] (m) := \psi (\tau^{-1} \cdot m)$, $\forall \tau \in T$, $\forall \psi \in \Hom_A (M,N)$, $\forall m\in M$. This action restricts to $\sideset{^*}{}\Hom_A (M,N)$ as we will illustrate in the next result.

\begin{prop}
Let $N$ be an object of $\mod_{\righttoleftarrow T} A$. The restriction of the functor $\sideset{^*}{} \Hom_A(-,N)$ to $\mod_{\righttoleftarrow T} A$ is an endofunctor on $\mod_{\righttoleftarrow T} A$.
\end{prop}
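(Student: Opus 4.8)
The plan is to show that if $M$ is an object of $\mod_{\righttoleftarrow T} A$, then $\sideset{^*}{}\Hom_A(M,N)$ is again such an object, and that a morphism in the category is sent to a morphism in the category; functoriality itself (composition, identities) is inherited from the usual $\sideset{^*}{}\Hom$ functor, so the real content is closure of the structure under the $T$-action. First I would recall that, since $M$ is finitely generated and graded and $N$ is graded, $\sideset{^*}{}\Hom_A(M,N) = \bigoplus_{d\in\ZZ^m} \Hom_A(M,N(d))_0$ is a finitely generated graded $A$-module, with $\sideset{^*}{}\Hom_A(M,N)_d$ consisting of the $A$-linear maps $\psi\colon M\to N$ that are homogeneous of degree $d$; this is the standard fact from \cite[p.~33]{MR1251956}. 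It remains to equip it with a compatible $T$-action and to verify the module axioms and compatibilities.

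The key steps, in order, are: (i) Given $\psi\in\sideset{^*}{}\Hom_A(M,N)$ homogeneous of degree $d$ and $\tau\in T$, define $[\tau\cdot\psi](m) := \tau\cdot\psi(\tau^{-1}\cdot m)$, exactly the restriction of the $T$-action on $\Hom_A(M,N)$ already introduced in the excerpt. Check that $\tau\cdot\psi$ is still $A$-linear: for $a\in A$ homogeneous, $[\tau\cdot\psi](am) = \tau\cdot\psi(\tau^{-1}\cdot(am)) = \tau\cdot\psi((\tau^{-1}\cdot a)(\tau^{-1}\cdot m)) = \tau\cdot\big((\tau^{-1}\cdot a)\,\psi(\tau^{-1}\cdot m)\big) = a\,\big(\tau\cdot\psi(\tau^{-1}\cdot m)\big) = a\,[\tau\cdot\psi](m)$, using compatibility of the $T$-action on $M$ and on $N$ with multiplication. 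Check that $\tau\cdot\psi$ is again homogeneous of the same degree $d$: this follows because the $T$-actions on $M$ and $N$ are compatible with grading, so $\tau^{-1}\cdot$ and $\tau\cdot$ preserve degrees, whence $\tau\cdot\psi$ maps $M_e$ into $N_{e+d}$. Thus the $T$-action preserves $\sideset{^*}{}\Hom_A(M,N)$, and each graded piece $\sideset{^*}{}\Hom_A(M,N)_d$ is $T$-stable, i.e.\ the action is compatible with grading. (ii) Verify the action is $\KK$-linear and that it is a genuine group action ($\id_T$ acts trivially, $(\sigma\tau)\cdot\psi = \sigma\cdot(\tau\cdot\psi)$) — routine. (iii) Verify compatibility with multiplication: for $a\in A_e$, $\psi\in\sideset{^*}{}\Hom_A(M,N)_d$, one has $\tau\cdot(a\psi) = (\tau\cdot a)(\tau\cdot\psi)$ as elements of $\sideset{^*}{}\Hom_A(M,N)_{e+d}$, by evaluating both sides on an arbitrary $m\in M$ and using the computation from step (i). (iv) For a morphism $\varphi\colon M\to M'$ in $\mod_{\righttoleftarrow T} A$, the induced map $\sideset{^*}{}\Hom_A(\varphi,N)\colon \sideset{^*}{}\Hom_A(M',N)\to\sideset{^*}{}\Hom_A(M,N)$, $\psi\mapsto\psi\circ\varphi$, is homogeneous and $A$-linear by the usual argument, and it is $T$-equivariant: $[\tau\cdot(\psi\circ\varphi)](m) = \tau\cdot(\psi\circ\varphi)(\tau^{-1}\cdot m) = \tau\cdot\psi(\varphi(\tau^{-1}\cdot m)) = \tau\cdot\psi(\tau^{-1}\cdot\varphi(m)) = [(\tau\cdot\psi)\circ\varphi](m)$, where the third equality uses the $T$-equivariance of $\varphi$. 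Finally, observe that $\sideset{^*}{}\Hom_A(M,N)$ is still finitely generated and graded (this is the cited fact), so it is an object of $\mod_{\righttoleftarrow T} A$, and $\sideset{^*}{}\Hom_A(\varphi,N)$ is a morphism there; compatibility of $\sideset{^*}{}\Hom_A(-,N)$ with composition and identities is inherited, completing the proof.

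I do not expect a serious obstacle here: every verification is a short diagram-chase using only the two compatibility axioms and \cite[p.~33]{MR1251956} for the finiteness and grading of $\sideset{^*}{}\Hom$. The one point that deserves a moment's care — and which I would state explicitly rather than leave to the reader — is step (i), namely that $\tau\cdot\psi$ remains $A$-linear and homogeneous of the \emph{same} degree; this is precisely where the hypothesis that the actions on $M$ and $N$ are compatible with grading and multiplication is used, and it is what makes the restriction of the action to $\sideset{^*}{}\Hom_A$ (as opposed to all of $\Hom_A$) legitimate.
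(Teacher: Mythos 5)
Your proposal is correct, and its skeleton is the same as the paper's: the paper likewise reduces the statement to three verifications --- the action preserves each graded piece $\Hom_d(M,N)$, it satisfies $\tau\cdot(a\psi)=(\tau\cdot a)(\tau\cdot\psi)$, and $\sideset{^*}{}\Hom_A(\varphi,N)$ is $T$-equivariant --- leaving functoriality and the grading/finiteness of $\sideset{^*}{}\Hom_A(M,N)$ to the standard theory, exactly as you do. The one substantive difference is the formula for the action: you use the conjugation action $[\tau\cdot\psi](m)=\tau\cdot\psi(\tau^{-1}\cdot m)$, while the paper's displayed definition reads $[\tau\cdot\psi](m)=\psi(\tau^{-1}\cdot m)$, twisting only the source; these differ whenever $T$ acts nontrivially on $N$ (e.g.\ $N=A$), so your formula is not literally ``the action already introduced.'' That said, your choice is the more robust one: under the source-only formula one gets $[\tau\cdot\psi](am)=(\tau^{-1}\cdot a)\,[\tau\cdot\psi](m)$, so $\tau\cdot\psi$ need not even lie in $\Hom_A(M,N)$, whereas the paper's own Claim 2 invokes $A$-linearity of $\tau\cdot\psi$ at the step $[\tau\cdot\psi]((\tau\cdot a)m)=(\tau\cdot a)[\tau\cdot\psi](m)$; the conjugation action is also the one consistent with the later identification $F^\vee\cong (F/\mathfrak{m}F)^*\otimes_\KK A$ and the sign flip of weights used in the forward-propagation algorithm. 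So your step (i), far from being redundant, is exactly what legitimizes the subsequent computations, and with it your steps (iii) and (iv) reproduce the paper's Claims 2 and 3 (up to the extra $\tau\cdot$ acting on the target). No gap.
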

\begin{proof}
We must show that applying $\sideset{^*}{} \Hom_A(-,N)$ to an object or morphism in $\mod_{\righttoleftarrow T} A$ will land us again in $\mod_{\righttoleftarrow T} A$. We do this in three steps. Let $M,M_1,M_2,N$ be objects in $\mod_{\righttoleftarrow T} A$ and $\varphi \colon M_1\rightarrow M_2$ a morphism in $\mod_{\righttoleftarrow T} A$. Recall that the graded component of $\sideset{^*}{} \Hom_A(M,N)$ of degree $d\in\ZZ^m$ is
\[\Hom_d (M,N) = \{\psi\in \Hom_A (M, N) \mid \forall d'\in\ZZ^m,\ \psi(M_{d'}) \subseteq N_{d+d'} \}.\]

\begin{description}[wide,itemsep=1ex]
\item[Claim 1] The action of $T$ on $\Hom_A (M,N)$ restricts to $\Hom_d (M,N)$, for all $d\in \ZZ^m$, i.e. the action of $T$ on $\sideset{^*}{} \Hom_A (M,N)$ is compatible with the grading.

For all $d'\in\ZZ^m$, $\tau\in T$, and $\psi \in \Hom_d (M,N)$, we have that
\[[\tau \cdot \psi] (M_{d'}) = \psi (\tau^{-1} \cdot M_{d'}) \subseteq \psi (M_{d'}) \subseteq N_{d+d'}\]
because the action of $T$ on $M$ is compatible with the grading. This shows $\tau \cdot \psi \in \Hom_d (M,N)$.
\item[Claim 2] The action of $T$ on $\sideset{^*}{} \Hom_A (M,N)$ is compatible with multiplication.

Let $d,d'\in \ZZ^m$ be arbitrary degrees. For all $\tau\in T$, $a\in A_d$, $\psi \in \Hom_{d'} (M,N)$ and $m\in M$, we get
\begin{gather*}
[\tau \cdot (a\psi) ] (m) = a\psi (\tau^{-1} \cdot m) = \psi (a (\tau^{-1} \cdot m)) = \psi (\tau^{-1} \cdot ( (\tau \cdot a) m)) =\\
= [\tau \cdot \psi] ( (\tau \cdot a) m) = (\tau \cdot a) [\tau \cdot \psi] (m) = [(\tau \cdot a) (\tau \cdot \psi)] (m)
\end{gather*}
because the action of $T$ on $M$ is compatible with multiplication. This shows that $\tau \cdot (a\psi) = (\tau \cdot a) (\tau \cdot \psi)$.
\item[Claim 3] The map $\sideset{^*}{} \Hom_A (\varphi,N) \colon \sideset{^*}{} \Hom_A (M_2,N)\rightarrow \sideset{^*}{} \Hom_A (M_1,N)$ is $T$-equivariant.

For simplicity denote $\sideset{^*}{} \Hom_A (\varphi,N)$ by $\varphi^*$. For all $\tau \in T$, $\psi \in \sideset{^*}{} \Hom_A (M_2,N)$ and $m\in M_1$, we obtain
\begin{gather*}
[\varphi^* (\tau \cdot \psi)] (m) = (\tau \cdot \psi)( \varphi (m)) = \psi (\tau^{-1} \cdot \varphi (m)) =\\
\psi (\varphi (\tau^{-1} \cdot m)) = [\varphi^* (\psi)] (\tau^{-1} \cdot m) = [\tau \cdot \varphi^* (\psi)] (m) 
\end{gather*}
because $\varphi$ is $T$-equivariant. This shows $\varphi^* (\tau \cdot \psi) = \tau \cdot \varphi^* (\psi)$.
\end{description}
\end{proof}

We are interested in applying the functor $\sideset{^*}{} \Hom_A(-,N)$ when $N=A$.

\begin{defin}\label{dual}
For an object $M$ in $\mod_{\righttoleftarrow T} A$, we set $M^{\vee} := \sideset{^*}{} \Hom_A (M,A)$ and call it the \emph{dual} of $M$ in $\mod_{\righttoleftarrow T} A$. For a morphism $\varphi \colon M_1\rightarrow M_2$ in $\mod_{\righttoleftarrow T} A$, we set $\varphi^{\vee} := \sideset{^*}{} \Hom_A (\varphi,A)$ and call it the \emph{dual} of $\varphi$ in $\mod_{\righttoleftarrow T} A$.
\end{defin}

\subsection{Free modules}\label{free_mod}
In the category of finitely generated $\ZZ^m$-graded $A$-modules, every free module has the form
\[\bigoplus_{d\in\ZZ^m} A(-d)^{\beta_d},\]
where $\beta_d\in\mathbb{N}$ $\forall d\in\ZZ^m$, and only finitely many $\beta_d$ are non zero.
Here $A(-d)$ denotes a free module of rank one generated in degree
$d$, i.e. with its generator $1_A$ artificially shifted to 
degree $d$.
In $\mod_{\righttoleftarrow T} A$, for the same choice of natural numbers $\beta_d$, there may be more than one isomorphism class of free modules depending on how $T$ acts on a homogeneous basis
of the free module.

Let $V$ be a finite dimensional $\ZZ^m$-graded representation of $T$, i.e. a finite dimensional $\ZZ^m$-graded $\KK$-vector space with a $\KK$-linear action of $T$ that is compatible with the grading. The $\KK$-vector space $V \otimes_\KK A$ is naturally graded by
\[(V \otimes_\KK A)_d := \bigoplus_{d'+d'' = d} V_{d'} \otimes_\KK A_{d''}.\]
Then $V \otimes_\KK A$ becomes a graded $A$-module with multiplication given by $a (v\otimes b) := v\otimes (ab)$, $\forall a,b\in A$, $\forall v\in V$. The usual action of $T$ on the tensor product $V \otimes_\KK A$ is compatible with grading and multiplication so $V \otimes_\KK A$ is an object in $\mod_{\righttoleftarrow T} A$.

\begin{ex}\label{ex3}
Using the setup of example \ref{ex1}, for each integer $i\in\ZZ$ we have a different one dimensional free module in $\mod_{\righttoleftarrow T} A$  that is generated in degree $d\in\ZZ$. These modules are given by
$(\bigwedge^3 V)^{\otimes i} \otimes_\CC A(-d)$, if $i\geq 0$, and by $(\bigwedge^3 V^*)^{\otimes -i} \otimes_\CC A(-d)$, if $i< 0$.
\end{ex}

\begin{defin}
A \emph{free module} in $\mod_{\righttoleftarrow T} A$ is an object which is isomorphic to $V\otimes_\KK A$ for some finite dimensional graded representation $V$ of $T$.
\end{defin}

The rank of $V\otimes_\KK A$ is $\dim_\KK V$. If $\dim_\KK V_d = \beta_d$, then
\[V\otimes_\KK A \cong \bigoplus_{d\in\ZZ^m} A(-d)^{\beta_d},\]
as graded $A$-modules (disregarding the action of $T$).

\begin{prop}\label{free_mod_structure}
If $F$ is a free module in $\mod_{\righttoleftarrow T} A$, then $F \cong (F/\mathfrak{m} F) \otimes_\KK A$.
\end{prop}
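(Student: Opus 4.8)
The plan is to produce an explicit $\mod_{\righttoleftarrow T} A$-isomorphism $F \cong (F/\mathfrak{m}F)\otimes_\KK A$. Since $F$ is a free module in $\mod_{\righttoleftarrow T} A$, by definition there is a finite dimensional graded representation $V$ of $T$ and an isomorphism $F \cong V\otimes_\KK A$, so it suffices to treat the case $F = V\otimes_\KK A$ and to show $(V\otimes_\KK A)/\mathfrak{m}(V\otimes_\KK A)\cong V$ as graded representations of $T$, with the induced map $V\otimes_\KK A \to \bigl((V\otimes_\KK A)/\mathfrak{m}(V\otimes_\KK A)\bigr)\otimes_\KK A$ an isomorphism in the category.

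First I would compute $\mathfrak{m}F$ when $F = V\otimes_\KK A$: since multiplication is $a(v\otimes b) = v\otimes(ab)$, one has $\mathfrak{m}F = V\otimes_\KK \mathfrak{m}$, and therefore, using that $\KK$ is a field so $-\otimes_\KK V$ is exact, $F/\mathfrak{m}F = (V\otimes_\KK A)/(V\otimes_\KK \mathfrak{m}) \cong V\otimes_\KK (A/\mathfrak{m}) \cong V\otimes_\KK \KK \cong V$. I would check this chain of identifications is compatible with the grading (using $A_0 = \KK$, which holds because the grading is positive so each variable has non-trivial degree) and with the $T$-action (the $T$-action on $A/\mathfrak m$ is trivial, matching the $V$ on the other side), so that the isomorphism $F/\mathfrak{m}F\cong V$ lives in $\mod_{\righttoleftarrow T}A$.

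Next I would identify the natural map. Pick any homogeneous $\KK$-basis $\{v_1,\dots,v_s\}$ of $V$ consisting of weight vectors (which exists by the representation theory of tori recalled in \S\ref{rep_theory}); then $\{v_i\otimes 1\}$ is a homogeneous $A$-basis of $F$, and its image gives a homogeneous basis of $F/\mathfrak{m}F$ identified via the above with $\{v_i\}$. The map $\Phi\colon F\to (F/\mathfrak m F)\otimes_\KK A$ sending $v_i\otimes a \mapsto \overline{v_i\otimes 1}\otimes a$ is $A$-linear and homogeneous by construction; it carries a homogeneous $A$-basis bijectively to a homogeneous $A$-basis, hence is an isomorphism of graded $A$-modules. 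Finally I would verify $\Phi$ is $T$-equivariant: on $v_i\otimes a$ with $v_i, a$ weight vectors both sides equal $\weight(v_i)+\weight(a)$ times $\overline{v_i\otimes 1}\otimes a$, by Proposition \ref{weight_in_tensor_product} applied on each side, and since such elements span $F$ over $\KK$ this suffices.

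The only genuinely delicate point is checking that the identification $F/\mathfrak m F\cong V$ is canonical enough that the induced $\Phi$ does not depend on the auxiliary basis and is $T$-equivariant rather than merely $\KK$-linear; but this follows because $\Phi$ can be described basis-free as the composite $F = V\otimes_\KK A \to (A/\mathfrak m\otimes_\KK V)\otimes_\KK A \xrightarrow{\sim} (F/\mathfrak m F)\otimes_\KK A$ of $T$-equivariant maps, the first being $\mathrm{id}_V\otimes(\text{projection }A\to A/\mathfrak m)$ composed with the symmetry of the tensor product and the canonical $\KK\otimes_\KK A\cong A$. So the main obstacle is bookkeeping of compatibilities, not any real difficulty.
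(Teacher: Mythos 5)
Your proposal is correct and follows essentially the same route as the paper: by definition $F\cong V\otimes_\KK A$, one identifies $F/\mathfrak{m}F\cong V$ via standard tensor identities (the paper writes this as $F\otimes_A(A/\mathfrak{m})\cong V\otimes_\KK(A/\mathfrak{m})\cong V$), and then substitutes back. The only difference is that you spell out the explicit basis/equivariance checks, where the paper simply notes that these standard isomorphisms already hold in $\mod_{\righttoleftarrow T}A$.
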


\begin{proof}
By definition of free module, $\exists V$, finite dimensional graded representation of $T$, such that $F\cong V\otimes_\KK A$. Notice that
\begin{gather*}
F / \mathfrak{m} F \cong F \otimes_A (A/\mathfrak{m}) \cong (V\otimes_\KK A) \otimes_A (A/\mathfrak{m}) \cong\\
\cong V \otimes_\KK (A \otimes_A (A/\mathfrak{m})) 
\cong V \otimes_\KK (A/\mathfrak{m}) \cong V \otimes_\KK \KK \cong V
\end{gather*}
where each step holds as isomorphism of graded representations of $T$. Therefore $F \cong (F/\mathfrak{m} F) \otimes_\KK A$.
\end{proof}

By the previous proposition, $F_d$ is isomorphic to $\bigoplus_{d'+d''=d} (F/\mathfrak{m} F)_{d'} \otimes_\KK A_{d''}$ as a representation of $T$. Recall that $A_{d''}$ has a basis of weight vectors consisting of all terms of degree $d''$, so its weights can be recovered using proposition \ref{weight_of_term}. The weights in a tensor product can be obtained via proposition \ref{weight_in_tensor_product}. Thus the weights of $F_d$ can be described, for any degree $d\in \ZZ^m$, as long as the weights in $F/\mathfrak{m}F$ are known.

Let $F$ be a free module of rank $s$ in $\mod_{\righttoleftarrow T} A$. A homogeneous basis of weight vectors of $F$ is a homogeneous basis $\tilde{\mathcal{F}} = \{\tilde{f}_1,\ldots,\tilde{f}_s\}$ of $F$ such that each $\tilde{f}_j$ is a weight vector of $F$. We adopt the convention of decorating homogeneous bases of weight vectors and their elements with a tilde.
Notice that the residue classes $\tilde{f}_1+\mathfrak{m}F,\ldots,\tilde{f}_s+\mathfrak{m}F$ form a basis of weight vectors of $F/\mathfrak{m} F$ and $\weight (\tilde{f}_j+\mathfrak{m}F) = \weight (\tilde{f}_j)$.

As for graded $A$-modules, we can introduce a graded Hom functor $\sideset{^*}{} \Hom_\KK (-,-)$ in the category of graded $\KK$-vector spaces (or graded representations of $T$). The dual of a graded $\KK$-vector space (or representation of $T$) $V$ is $V^* := \sideset{^*}{} \Hom_\KK (V,\KK)$. This construction allows us to identify the dual of a free object in $\mod_{\righttoleftarrow T} A$.

\begin{remark}
Observe that for every graded $\KK$-vector space $V$ and degree $d\in\ZZ^m$, $(V^*)_d = \Hom_\KK (V_{-d},\KK)$.
\end{remark}

\begin{prop}\label{dual_free_mod_structure}
If $F$ is a free module in $\mod_{\righttoleftarrow T} A$, then $F^\vee \cong (F/\mathfrak{m} F)^* \otimes_\KK A$.
\end{prop}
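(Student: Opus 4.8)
The plan is to mimic the proof of Proposition \ref{free_mod_structure}, but now tracking the behavior of the graded Hom functor $\sideset{^*}{}\Hom_A(-,A)$ under the identification $F \cong V\otimes_\KK A$. First I would invoke the definition of free module to fix a finite dimensional graded representation $V$ of $T$ with $F\cong V\otimes_\KK A$ (an isomorphism in $\mod_{\righttoleftarrow T}A$); by Proposition \ref{free_mod_structure} we may in fact take $V = F/\mathfrak{m}F$. The goal then reduces to producing a natural isomorphism
\[
\sideset{^*}{}\Hom_A(V\otimes_\KK A,\,A)\;\cong\;V^*\otimes_\KK A
\]
of objects in $\mod_{\righttoleftarrow T}A$, where $V^* = \sideset{^*}{}\Hom_\KK(V,\KK)$.

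The key step is a tensor-hom / adjunction-style identity: for a finite dimensional graded $\KK$-vector space $V$ and any graded $A$-module $N$, one has $\sideset{^*}{}\Hom_A(V\otimes_\KK A, N)\cong \sideset{^*}{}\Hom_\KK(V,N)\cong V^*\otimes_\KK N$ as graded $A$-modules, naturally. I would establish this by writing down the map explicitly: pick a homogeneous basis $\{v_1,\ldots,v_r\}$ of weight vectors of $V$ (which exists since $V$ is a finite dimensional representation of $T$), with dual basis $\{v_1^*,\ldots,v_r^*\}$ of $V^*$; send $v_i^*\otimes a \in V^*\otimes_\KK A$ to the $A$-linear map $V\otimes_\KK A \to A$ determined by $v_j\otimes b \mapsto \delta_{ij}\,ab$. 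This is manifestly $A$-linear and bijective (its inverse records the images of the $v_j\otimes 1$), and a degree check against the formula $(V^*)_d = \Hom_\KK(V_{-d},\KK)$ in the preceding remark shows it is homogeneous of degree $0$. It remains to check $T$-equivariance: for $\tau\in T$, $[\tau\cdot(v_i^*\otimes a)](v_j\otimes b) = (\tau\cdot v_i^*)\otimes(\tau\cdot a)$ evaluated appropriately, and one compares with $[\tau\cdot\psi]$ where $\psi$ is the image of $v_i^*\otimes a$, using the definition $[\tau\cdot\psi](m)=\psi(\tau^{-1}\cdot m)$ together with compatibility of the $T$-action on $A$ with multiplication and the description of the $T$-action on $V^*$ from the paragraph preceding Proposition \ref{weight_in_dual}. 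The two sides agree because the pairing $v_i^*(v_j)=\delta_{ij}$ is $T$-invariant when $V^*$ carries the contragredient action, which is exactly what makes $\weight(v_i^*) = -\weight(v_i)$ work in Proposition \ref{weight_in_dual}.

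Putting these together with $V=F/\mathfrak{m}F$ and $N=A$ yields $F^\vee = \sideset{^*}{}\Hom_A(F,A)\cong \sideset{^*}{}\Hom_A((F/\mathfrak{m}F)\otimes_\KK A, A)\cong (F/\mathfrak{m}F)^*\otimes_\KK A$, as claimed. The main obstacle I anticipate is purely bookkeeping: getting the grading shifts and the sign conventions on weights exactly right, since $V^*$ in the vector-space sense already builds in the degree reversal $(V^*)_d=\Hom_\KK(V_{-d},\KK)$, and one must confirm this matches the grading that $\sideset{^*}{}\Hom_A$ induces on the module side rather than introducing a spurious shift. Once the explicit map above is written down, checking it is an isomorphism in $\mod_{\righttoleftarrow T}A$ is a routine verification along the three axes — $A$-linearity, homogeneity, and $T$-equivariance — each of which is a short computation.
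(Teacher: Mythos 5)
Your proposal is correct and follows essentially the same route as the paper: both reduce to the identity $\sideset{^*}{}\Hom_A\bigl((F/\mathfrak{m}F)\otimes_\KK A,\,A\bigr)\cong (F/\mathfrak{m}F)^*\otimes_\KK A$, which the paper obtains by citing the standard hom--tensor adjunction isomorphisms (noting they transfer to $\mod_{\righttoleftarrow T}A$), while you verify the same isomorphism explicitly on a basis of weight vectors, checking $A$-linearity, grading, and $T$-equivariance by hand.
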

\begin{proof}
The thesis follows from the following chain of isomorphisms in $\mod_{\righttoleftarrow T} A$:
\begin{gather*}
F^\vee \cong \sideset{^*}{_A} \Hom (F,A) \cong \sideset{^*}{_A} \Hom ((F/\mathfrak{m} F) \otimes_\KK A,A) \cong\\
\cong \sideset{^*}{_A} \Hom (A,\sideset{^*}{_\KK} \Hom (F/\mathfrak{m} F,A)) \cong \sideset{^*}{_\KK} \Hom (F/\mathfrak{m} F,A) \cong (F/\mathfrak{m} F)^* \otimes_\KK A.
\end{gather*}
\end{proof}

\begin{ex}
  Let us consider again free modules as in example \ref{ex3}.
  If $F = \bigwedge^3 V \otimes_\CC A(-d)$,
  then $F/\mathfrak{m} F \cong \bigwedge^3 V$ (cf. proposition
  \ref{free_mod_structure}).
  Furthermore, $(F/\mathfrak{m} F)^* \cong \bigwedge^3 V^*$ and
  therefore $F^\vee \cong \bigwedge^3 V^* \otimes_\CC A(d)$
  (cf. proposition \ref{dual_free_mod_structure}).
\end{ex}

\begin{remark}
The proofs of proposition \ref{free_mod_structure} and \ref{dual_free_mod_structure} use standard isomorphisms, like associativity of the tensor product or adjunction of Hom and tensor product. Since those isomorphisms hold in the categories of graded $A$-modules and representations of $T$, they immediately transfer to $\mod_{\righttoleftarrow T} A$.
\end{remark}

For every free module $V\otimes_\KK A$ in $\mod_{\righttoleftarrow T} A$, there is a natural injection of graded representations of $T$, $i_V\colon  V \to V\otimes_\KK A$ sending $v$ to $v\otimes 1_A$. Free modules in $\mod_{\righttoleftarrow T} A$ satisfy the following universal property.

\begin{prop}\label{univ_prop_free_mod}
Let $V$ be a finite dimensional graded representation of $T$. For every module $M$ in $\mod_{\righttoleftarrow T} A$ and any homogeneous map $\hat{\psi} \colon V\to M$ of graded representations of $T$, there exists a unique morphism $\psi \colon V\otimes_\KK A \to M$ in $\mod_{\righttoleftarrow T} A$ such that $\hat{\psi} = \psi \circ i_V$.
\end{prop}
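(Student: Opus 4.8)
The plan is to define $\psi$ by the usual extension-of-scalars formula and then check, one structure at a time, that it is a morphism in $\mod_{\righttoleftarrow T} A$; uniqueness will follow because $i_V(V)$ generates $V\otimes_\KK A$ as an $A$-module. Concretely, the assignment $(v,a)\mapsto a\cdot\hat\psi(v)$ from $V\times A$ to $M$ is $\KK$-bilinear, so it factors through a $\KK$-linear map $\psi\colon V\otimes_\KK A\to M$ with $\psi(v\otimes a) = a\cdot\hat\psi(v)$. That $\psi$ is $A$-linear is immediate: $\psi(b\cdot(v\otimes a)) = \psi(v\otimes ba) = (ba)\hat\psi(v) = b\cdot\psi(v\otimes a)$, using the definition of the $A$-module structure on $V\otimes_\KK A$.

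Next I would verify the remaining compatibilities. If $\hat\psi$ is homogeneous of degree $e\in\ZZ^m$, then for homogeneous $v\in V_{d'}$ and $a\in A_{d''}$ one has $\hat\psi(v)\in M_{d'+e}$, hence $\psi(v\otimes a) = a\hat\psi(v)\in M_{d'+d''+e}$; since such elements span $V\otimes_\KK A$, the map $\psi$ is homogeneous of degree $e$. For $T$-equivariance, let $\tau\in T$, $v\in V$, $a\in A$, and compute $\psi(\tau\cdot(v\otimes a)) = \psi((\tau\cdot v)\otimes(\tau\cdot a)) = (\tau\cdot a)\,\hat\psi(\tau\cdot v) = (\tau\cdot a)(\tau\cdot\hat\psi(v)) = \tau\cdot(a\,\hat\psi(v)) = \tau\cdot\psi(v\otimes a)$, where the third equality uses $T$-equivariance of $\hat\psi$ and the fourth uses that the action of $T$ on $M$ is compatible with multiplication. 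Thus $\psi$ is a morphism in $\mod_{\righttoleftarrow T} A$, and $\hat\psi = \psi\circ i_V$ because $\psi(i_V(v)) = \psi(v\otimes 1_A) = 1_A\,\hat\psi(v) = \hat\psi(v)$.

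For uniqueness, suppose $\psi'\colon V\otimes_\KK A\to M$ is any morphism in $\mod_{\righttoleftarrow T} A$ with $\psi'\circ i_V = \hat\psi$. Then $A$-linearity forces $\psi'(v\otimes a) = \psi'(a\cdot(v\otimes 1_A)) = a\cdot\psi'(i_V(v)) = a\cdot\hat\psi(v) = \psi(v\otimes a)$ for all $v\in V$ and $a\in A$; since $V\otimes_\KK A$ is generated as an $A$-module by $i_V(V)$, we conclude $\psi' = \psi$. I do not expect any genuine obstacle here: the statement is just the unit of the adjunction between $-\otimes_\KK A$ and the forgetful functor to graded $\KK$-vector spaces, and the only thing that needs attention is that remembering the $T$-action costs nothing, precisely because the structure map used (multiplication on $M$) is $T$-equivariant by hypothesis; the sole bookkeeping points are the well-definedness of $\psi$ as a map out of a tensor product and tracking the degree shift $e$.
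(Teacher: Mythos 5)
Your proposal is correct and follows essentially the same route as the paper: the paper defines $\psi$ from the same $\KK$-bilinear map $(v,a)\mapsto a\hat{\psi}(v)$ and leaves the verification that $\psi$ is an equivariant graded $A$-module map (and the uniqueness) to the reader, which is exactly what you spell out. Your write-up simply fills in those routine checks, so there is nothing to correct.
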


\begin{proof}
  Consider the map
  \begin{align*}
    V\times A &\longrightarrow M\\
    (v,a) &\longmapsto a \hat{\psi} (v).
  \end{align*}
  This map is obviously $\KK$-bilinear so it induces
  a $\KK$-linear map $\psi \colon V\otimes_\KK A \to M$,
  which clearly satisfies $\hat{\psi} = \psi \circ i_V$.
  The proof that $\psi$ is an equivariant map of graded $A$-modules
  is straightforward and we leave it to the reader.
\end{proof}

\begin{prop}\label{equivariant_resolution}
Every object $M$ in $\mod_{\righttoleftarrow T} A$ admits a finite minimal free resolution in $\mod_{\righttoleftarrow T} A$, i.e. an exact complex
\[0\to F_n \xrightarrow{d_n} F_{n-1} \to \ldots \to F_1 \xrightarrow{d_1} F_0 \xrightarrow{d_0} M \to 0\]
of  maps and modules in $\mod_{\righttoleftarrow T} A$, such that each $F_i$ is free.
\end{prop}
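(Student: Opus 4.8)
The plan is to construct the resolution inductively, following the classical construction of a minimal free resolution while carrying the $T$-action along at every step. The two things to watch are that each choice of minimal generators can be made by weight vectors, and that the syzygy modules remain objects of $\mod_{\righttoleftarrow T} A$.

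\emph{Step 1 (equivariant minimal generators).} For any object $N$ in $\mod_{\righttoleftarrow T} A$, set $V := N/\mathfrak{m}N$, a finite dimensional graded representation of $T$, and fix a homogeneous basis of weight vectors of $V$ with elements $\bar{n}_1,\ldots,\bar{n}_s$. The projection $N\to N/\mathfrak{m}N$ is a homogeneous surjective $T$-equivariant map, so in the degree and weight of $\bar{n}_j$ it restricts to a surjection of finite dimensional weight spaces; hence I can choose a preimage $n_j\in N$ of $\bar{n}_j$ that is a weight vector with $\weight(n_j)=\weight(\bar{n}_j)$. The assignment $\bar{n}_j\mapsto n_j$ defines a homogeneous map $\hat{\psi}\colon V\to N$ of graded representations of $T$, and Proposition \ref{univ_prop_free_mod} yields a morphism $\psi\colon V\otimes_\KK A\to N$ in $\mod_{\righttoleftarrow T} A$ with $\hat{\psi}=\psi\circ i_V$. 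By construction $\psi$ induces an isomorphism modulo $\mathfrak{m}$, so the graded Nakayama lemma makes $\psi$ surjective, and $V\otimes_\KK A$ is a free module in the category.

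\emph{Step 2 (the kernel stays in the category, and minimality).} Let $K:=\ker\psi$. As the kernel of a homogeneous $A$-linear $T$-equivariant map, $K$ is a graded $A$-submodule of the Noetherian module $V\otimes_\KK A$, stable under $T$ with compatible action, hence finitely generated; so $K$ is again an object of $\mod_{\righttoleftarrow T} A$. Applying $-\otimes_A A/\mathfrak{m}$ to $0\to K\to V\otimes_\KK A\to N\to 0$ and using that $\psi$ is an isomorphism modulo $\mathfrak{m}$, one sees that the image of $K$ in $(V\otimes_\KK A)/\mathfrak{m}(V\otimes_\KK A)$ is zero, i.e. $K\subseteq\mathfrak{m}(V\otimes_\KK A)$.

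\emph{Step 3 (iterate and terminate).} Put $M_0:=M$; having built $F_{i-1}\to M_{i-1}$, set $M_i:=\ker(F_{i-1}\to M_{i-1})$, apply Step 1 to get a surjection $F_i\to M_i$, and let $d_i\colon F_i\to F_{i-1}$ be the composite $F_i\to M_i\hookrightarrow F_{i-1}$ (with $d_0\colon F_0\to M$). Exactness at $F_i$ holds since $\operatorname{im} d_i=M_i=\ker d_{i-1}$, and by Step 2 each $d_i$ maps into $\mathfrak{m}F_{i-1}$, so $d_i\otimes A/\mathfrak{m}=0$ and the complex is minimal. Forgetting the $T$-action, $F_\bullet$ is a minimal graded free resolution of $M$ over $A$, so by Hilbert's syzygy theorem it has length at most $n$; thus $F_i=0$ for $i>n$ and the resolution is finite. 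The only point beyond the classical argument is the equivariant lifting of generators in Step 1 and the persistence of minimality in Step 2; I expect neither to cause difficulty, since finite dimensional representations of a torus are semisimple, so every equivariant surjection between them splits compatibly with the weight space decomposition, and termination then comes for free from the syzygy theorem applied to the underlying graded resolution.
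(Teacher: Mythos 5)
Your proof is correct and follows essentially the same route as the paper: build the resolution inductively by splitting the projection $N\to N/\mathfrak{m}N$ equivariantly and invoking proposition \ref{univ_prop_free_mod}, then iterate on kernels and deduce finiteness from the underlying graded resolution. The only difference is cosmetic: where the paper simply asserts the existence of an equivariant section (semisimplicity of torus representations), you construct it explicitly by lifting weight vectors, and you spell out minimality and the Noetherian/kernel details that the paper leaves implicit.
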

\begin{proof}
Consider the projection $\pi \colon M\to M/\mathfrak{m}M$. As a map of graded representations of $T$, $\pi$ admits a section, i.e. a map $\hat{d}_0 \colon M/\mathfrak{m}M \to M$ such that $\pi\circ\hat{d}_0 = \id_{M/\mathfrak{m}M}$. By the universal property of free modules in $\mod_{\righttoleftarrow T} A$ (prop. \ref{univ_prop_free_mod}), there exists a unique map $d_0\colon (M/\mathfrak{m}M)\otimes_\KK A\to M$ such that $\hat{d}_0 = d_0\circ i_{M/\mathfrak{m}M}$. Set $F_0 := (M/\mathfrak{m}M)\otimes_\KK A$. Observe that $d_0$ maps a homogeneous basis of $F_0$ to a minimal generating set of $M$, which makes $d_0$ surjective.

Now suppose that all maps and modules in the complex have been constructed up to a certain index $i\geq 0$. To obtain $d_{i+1} \colon F_{i+1}\to F_i$, let $M_i := \ker d_i$ and repeat the previous construction using $M_i$ instead of $M$. As a result, $d_{i+1}$ will map a homogeneous basis of $F_{i+1}$ to a minimal generating set of $M_i$, guaranteeing exactness of the complex.

Notice that this construction will produce a complex $F_\bullet$ that is also a minimal free resolution of $M$ in the category of finitely generated graded $A$-modules, hence the process stops when $i>n$.
\end{proof}

Since minimal free resolutions are unique up to isomorphisms of complexes, we can reinterpret the result of proposition \ref{equivariant_resolution} by saying that any minimal free resolution of $M$ as an $A$-module carries an action of $T$ that commutes with the differentials.

\begin{remark}
Proposition \ref{equivariant_resolution} holds in $\mod_{\righttoleftarrow G} A$ for other groups $G$, as long as the category of finite dimensional graded representations of $G$ over $\KK$ is semisimple. This guarantees the existence of sections used in the proof.
\end{remark}

\begin{ex}\label{ex4}
Continuing example \ref{ex1}, we look at the complex of free modules in $\mod_{\righttoleftarrow G} A$ given by
\[0\to \bigwedge^3 V \otimes_\CC A(-3) \xrightarrow{d_3} \bigwedge^2 V \otimes_\CC A(-2) \xrightarrow{d_2} V \otimes_\CC A(-1) \xrightarrow{d_1} A\]
where the maps are defined as follows:
\begin{align*}
d_j \colon \bigwedge^j V \otimes_\CC A(-j) &\longrightarrow \bigwedge^{j-1} V \otimes_\CC A(-j+1)\\
v_{i_1} \wedge \ldots \wedge v_{i_j} &\longmapsto \sum_{k=1}^j (-1)^{k+1} x_k v_{i_1} \wedge \ldots \wedge v_{i_{k-1}} \wedge v_{i_{k+1}} \ldots \wedge v_{i_j}.
\end{align*}
This is in fact the Koszul complex $K_\bullet (x_1,x_2,x_3)$ on the variables of $A$, which is a minimal free resolution of $A/\mathfrak{m}$ \cite[Cor. 1.6.14]{MR1251956}. The action of $G$ on each free module is dictated by the exterior powers of $V$. The differential $d_j$ maps the generators of $\bigwedge^j V \otimes_\CC A(-j)$, which live in degree $j$, into $\bigwedge^{j-1} V \otimes_\CC A_1$. Notice that $A_1 = V$ so
\[\bigwedge^{j-1} V \otimes_\CC A_1 \cong \bigwedge^j V \oplus \mathbb{S}_{(2,1^{j-2})} V,\]
where $\mathbb{S}_{(2,1^{j-2})}$ denotes a Schur functor \cite[\S2.1]{MR1988690} and both summands on the right hand side are irreducible representations of $G$. Such a decomposition can be obtained using Pieri's formula \cite[Cor. 2.3.5]{MR1988690}. By Schur's lemma \cite[Ch. XVII, Prop. 1.1]{MR1878556}, we deduce there is a unique $G$-equivariant map $d_j$ up to multiplication by a scalar.
This map can be described as the diagonal map between exterior powers \cite[p. 3]{MR1988690}.
\end{ex}

\section{Propagating weights}\label{propagation}
This section contains a detailed description of how the weights for the action of a torus propagate along an equivariant map of free modules. We build gradually towards a fairly general result by first examining the case of maps expressed with respect to bases of weight vectors for the domain and codomain (\S\ref{bases_of_weight_vectors}). The main theorem is presented in section \ref{general_case}.

\subsection{Bases of weight vectors}\label{bases_of_weight_vectors}
Suppose $\varphi\colon E \rightarrow F$ is a map of free modules in $\mod_{\righttoleftarrow T} A$. Assume $E$ has rank $r$ and a homogeneous basis of weight vectors $\tilde{\mathcal{E}} = \{\tilde{e}_1,\dots, \tilde{e}_r\}$, and that $F$ has rank $s$ and a homogeneous basis of weight vectors $\tilde{\mathcal{F}} = \{\tilde{f}_1,\dots, \tilde{f}_s\}$. The goal of this section is to explain how to recover $\weight (\tilde{e}_1),\ldots,\weight (\tilde{e}_r)$ if we assume that $\weight (\tilde{f}_1),\ldots,\weight (\tilde{f}_s)$ are known. This will provide a complete list of weights of $E$ and hence it will identify $E$ as a representation of $T$.

\begin{prop}\label{weight_in_homogeneous_basis}
Let  $\varphi\colon E \rightarrow F$ be a map of free modules in $\mod_{\righttoleftarrow T} A$. Let $\tilde{e}\in E$ be a homogeneous weight vector and assume $\varphi (\tilde{e}) \neq 0$. If $\varphi (\tilde{e}) = \sum_{j=1}^s p_j \tilde{f}_j$ for some homogeneous polynomials $p_1,\ldots,p_s\in A$, then each non zero $p_j$ is a weight vector in $A$ and
\[\weight (\tilde{e}) = \weight (p_j) + \weight (\tilde{f_j}).\]
\end{prop}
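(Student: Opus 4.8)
The plan is to exploit the fact that $F$ decomposes, as a representation of $T$, via its weight vector basis: writing $\tilde f_j$ for the basis elements, we have $F = \bigoplus_j A \tilde f_j$, and since each $\tilde f_j$ is a weight vector, the $A$-submodule $A\tilde f_j$ is $T$-stable and isomorphic, as an object of $\mod_{\righttoleftarrow T} A$, to a twisted copy of $A$ with its generator carrying the weight $\weight(\tilde f_j)$. First I would set $d := \deg \tilde e$ and observe that, since $\varphi$ is a homogeneous map of degree $0$ (as are all morphisms in $\mod_{\righttoleftarrow T} A$), the image $\varphi(\tilde e)$ lies in $F_d$, so each $p_j$ is homogeneous of degree $d - \deg \tilde f_j$; this makes the statement about $\weight(p_j)$ even meaningful, because a homogeneous polynomial can be a weight vector.

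Next I would use equivariance to pin down that $\varphi(\tilde e)$ is itself a weight vector: by proposition \ref{weight_in_equivariant_map}, since $\tilde e$ is a weight vector and $\varphi(\tilde e) \neq 0$, we get that $\varphi(\tilde e)$ is a weight vector in $F$ with $\weight(\varphi(\tilde e)) = \weight(\tilde e)$. Now the key step: I want to conclude that in the decomposition $\varphi(\tilde e) = \sum_j p_j \tilde f_j$, each summand $p_j \tilde f_j$ is separately a weight vector of the same weight $\weight(\tilde e)$. This follows because the weight-space decomposition of $F_d$ respects the direct sum $F_d = \bigoplus_j (A_{d - \deg \tilde f_j}) \tilde f_j$: each $(A\tilde f_j)_d = A_{d-\deg\tilde f_j}\cdot\tilde f_j$ is a $T$-subrepresentation, so its intersection with the weight space $(F_d)_{\weight(\tilde e)}$ gives exactly the $\weight(\tilde e)$-component of $p_j\tilde f_j$; since $\varphi(\tilde e)$ lies entirely in $(F_d)_{\weight(\tilde e)}$, each $p_j \tilde f_j$ must too. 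Equivalently, one can argue directly: the map $a \mapsto a\tilde f_j$ from $A_{d-\deg\tilde f_j}$ into $F_d$ is $T$-equivariant up to the weight shift $\weight(\tilde f_j)$ — more precisely, it is the composite of multiplication $A_{d-\deg\tilde f_j}\otimes_\KK \KK\tilde f_j \to F_d$, which is equivariant by compatibility of the $T$-action with multiplication — so by propositions \ref{weight_in_equivariant_map} and \ref{weight_in_tensor_product}, if $p_j \tilde f_j \neq 0$ then $p_j$ is a weight vector and $\weight(p_j \tilde f_j) = \weight(p_j) + \weight(\tilde f_j)$.

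Finally I would assemble the pieces: projecting the identity $\varphi(\tilde e) = \sum_j p_j \tilde f_j$ onto the $j$-th summand $A\tilde f_j$ (an $A$-linear, $T$-equivariant projection), the image of the weight vector $\varphi(\tilde e)$ is $p_j \tilde f_j$, which by proposition \ref{weight_in_equivariant_map} is either zero or a weight vector of weight $\weight(\tilde e)$. In the latter case, combining with $\weight(p_j\tilde f_j) = \weight(p_j) + \weight(\tilde f_j)$ from the previous paragraph yields $\weight(\tilde e) = \weight(p_j) + \weight(\tilde f_j)$, as claimed. The only subtlety — and the step I expect to need the most care — is justifying that multiplication by $\tilde f_j$ genuinely carries the $T$-action in the expected way, i.e. that $\tau \cdot (a \tilde f_j) = (\tau \cdot a)(\tau \cdot \tilde f_j) = (\tau\cdot a)\,\chi(\tau)\,\tilde f_j$ where $\chi = \weight(\tilde f_j)$; this is exactly compatibility of the $T$-action on $F$ with multiplication, applied to the free module $F$, and it is what legitimizes treating $A\tilde f_j$ as a weight-shifted copy of $A$. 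Everything else is a routine combination of propositions \ref{weight_in_tensor_product} and \ref{weight_in_equivariant_map}.
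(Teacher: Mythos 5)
Your overall strategy is sound, and in substance it is the paper's own argument repackaged: the paper expands $\tau\cdot\varphi(\tilde e)$ in two ways and compares coefficients across the basis $\tilde{\mathcal F}$ all at once, while you project onto the $T$-stable summands $A\tilde f_j$ one at a time; comparing coefficients in a basis and projecting onto the summands of $F=\bigoplus_j A\tilde f_j$ are the same manoeuvre. Your first two steps are fine: $\varphi(\tilde e)$ is a weight vector of weight $\weight(\tilde e)$ by proposition \ref{weight_in_equivariant_map}, each $A\tilde f_j$ is $T$-stable by compatibility with multiplication, the projections are $T$-equivariant, and hence each nonzero $p_j\tilde f_j$ lies in the weight space $(F_d)_{\weight(\tilde e)}$.

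The one step that needs tightening is the passage from ``$p_j\tilde f_j$ is a weight vector'' to ``$p_j$ is a weight vector'', which is the actual content of the proposition. Propositions \ref{weight_in_tensor_product} and \ref{weight_in_equivariant_map} only give the forward implication (if $p_j$ is a weight vector, then so is $p_j\tilde f_j$, with the weights adding); they do not, as cited, yield the converse you are using. The fix is short and uses exactly the identity you flag at the end together with the injectivity of $a\mapsto a\tilde f_j$ (clear, since $\tilde f_j$ is a basis element of a free module): writing $\chi=\weight(\tilde e)$ and $\chi_j=\weight(\tilde f_j)$ multiplicatively, equate
\[
\chi(\tau)\,p_j\tilde f_j \;=\; \tau\cdot(p_j\tilde f_j) \;=\; \chi_j(\tau)\,(\tau\cdot p_j)\,\tilde f_j
\]
and cancel $\tilde f_j$ to obtain $\tau\cdot p_j=\chi(\tau)\chi_j(\tau)^{-1}p_j$ for all $\tau$, so $p_j$ is a weight vector of weight $\chi\chi_j^{-1}$, which is the stated additive formula. (Equivalently, decompose $p_j$ into its weight components in $A_{d-\deg\tilde f_j}$ and note that distinct components land in distinct weight spaces of $F_d$, so only one can survive.) This is precisely the coefficient comparison in the paper's proof, carried out one summand at a time; with it inserted, your proof is complete.
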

\begin{proof}
By proposition \ref{weight_in_equivariant_map}, $\varphi (\tilde{e})$ is a weight vector and $\weight (\tilde{e}) = \weight (\varphi(\tilde{e}))$. Suppose $\tilde{e}$ has weight $\chi\in X(T)$ and $\forall j\in \{1,\ldots,s\}$, $\tilde{f}_j$ has weight $\chi_i\in X(T)$. Then $\forall \tau\in T$
\[\tau \cdot \varphi (\tilde{e}) = \varphi (\tau \cdot \tilde{e}) = \varphi (\chi(\tau) \tilde{e}) = \chi(\tau) \varphi (\tilde{e}) = \sum_{j=1}^s (\chi (\tau) p_j) \tilde{f}_j\]
and, at the same time,
\[\tau \cdot \varphi (\tilde{e}) = \sum_{j=1}^s (\tau \cdot p_j) (\tau \cdot \tilde{f}_j) = \sum_{j=1}^s ( \tau \cdot p_j) (\chi_j (\tau)\tilde{f}_j).\]
Because $\tilde{\mathcal{F}}$ is a homogeneous basis of $F$, we deduce that $\chi_j (\tau) (\tau\cdot p_j) = \chi (\tau) p_j$ so
\[ \tau\cdot p_j = \chi(\tau) \chi_j(\tau)^{-1} p_j = \chi \chi_j^{-1}(\tau) p_j\]
$\forall j\in \{1,\ldots,s\}$. This implies that each non zero $p_j$ is a weight vector with weight $\chi \chi_j^{-1}$. Additively, we may write $\weight (p_j) = \weight (\tilde{e})-\weight (\tilde{f}_j)$ which gives the equality in the thesis.
\end{proof}

\begin{coro}\label{weight_in_homogeneous_basis_via_term}
If $\hat{t} \tilde{f}_{\hat{\jmath}} \in \Supp_{\tilde{\mathcal{F}}} (\varphi (\tilde{e}))$, then
\[\weight (\tilde{e}) = \weight (\hat{t}) + \weight (\tilde{f}_{\hat{\jmath}}).\]
\end{coro}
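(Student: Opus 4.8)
The plan is to deduce this immediately from Proposition \ref{weight_in_homogeneous_basis} together with Proposition \ref{weight_of_poly}, so the argument is short. Write $\varphi(\tilde e) = \sum_{j=1}^s p_j \tilde f_j$ with each $p_j \in A$ homogeneous, exactly as in the hypothesis of Proposition \ref{weight_in_homogeneous_basis}. The first step is to translate the hypothesis $\hat t \tilde f_{\hat\jmath} \in \Supp_{\tilde{\mathcal F}}(\varphi(\tilde e))$ into a statement about the polynomial $p_{\hat\jmath}$: since $\tilde{\mathcal F}$ is a homogeneous basis, a term $\hat t \tilde f_{\hat\jmath}$ of $F$ lies in $\Supp_{\tilde{\mathcal F}}(\varphi(\tilde e))$ precisely when $\hat t$ appears with non-zero coefficient in $p_{\hat\jmath}$, i.e. $\hat t \in \Supp(p_{\hat\jmath})$; in particular $p_{\hat\jmath} \neq 0$.

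The second step invokes Proposition \ref{weight_in_homogeneous_basis}: because $\varphi(\tilde e) \neq 0$ (it has a non-empty support) and $p_{\hat\jmath} \neq 0$, the polynomial $p_{\hat\jmath}$ is a weight vector and
\[\weight(\tilde e) = \weight(p_{\hat\jmath}) + \weight(\tilde f_{\hat\jmath}).\]
The third step applies Proposition \ref{weight_of_poly} to $p_{\hat\jmath}$: since $\hat t \in \Supp(p_{\hat\jmath})$ and $p_{\hat\jmath}$ is a weight vector, we get $\weight(\hat t) = \weight(p_{\hat\jmath})$. Substituting this into the previous displayed equation yields $\weight(\tilde e) = \weight(\hat t) + \weight(\tilde f_{\hat\jmath})$, as claimed.

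There is no real obstacle here; the only point requiring a moment of care is the bookkeeping in the first step, namely making precise that the $\tilde{\mathcal F}$-support of $\varphi(\tilde e)$ as an element of the free module $F$ decomposes coordinate-by-coordinate into the supports of the $p_j$, so that membership of $\hat t \tilde f_{\hat\jmath}$ in $\Supp_{\tilde{\mathcal F}}(\varphi(\tilde e))$ is equivalent to $\hat t \in \Supp(p_{\hat\jmath})$. Everything else is a direct citation of the two preceding propositions.
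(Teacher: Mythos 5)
Your proposal is correct and follows essentially the same path as the paper's proof: translate the support hypothesis into $\hat{t}\in\Supp(p_{\hat{\jmath}})$ (so $p_{\hat{\jmath}}\neq 0$ and $\varphi(\tilde{e})\neq 0$), apply proposition \ref{weight_in_homogeneous_basis} to get the weight equality, and use proposition \ref{weight_of_poly} to replace $\weight(p_{\hat{\jmath}})$ by $\weight(\hat{t})$. Nothing is missing; your extra care about the coordinate-by-coordinate decomposition of the support is exactly the implicit step the paper also relies on.
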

\begin{proof}
If $\varphi (\tilde{e}) = \sum_{j=1}^s p_j \tilde{f}_j$, the hypothesis implies $\hat{t} \in \Supp (p_{\hat{\jmath}})$; in particular, $p_{\hat{\jmath}}\neq 0$ so that $\varphi (\tilde{e}) \neq 0$ as well. By the previous proposition, $p_{\hat{\jmath}}$ is a weight vector and $\weight (p_{\hat{\jmath}}) = \weight (\hat{t})$ by proposition \ref{weight_of_poly}. The thesis follows using the weight equality in the previous proposition.
\end{proof}

When applying corollary \ref{weight_in_homogeneous_basis_via_term}, any term $\hat{t} \tilde{f}_{\hat{\jmath}} \in \Supp_{\tilde{\mathcal{F}}} (\varphi (\tilde{e}))$ may be used. 
In a computational setting, the natural choice of term is $\LT (\varphi (\tilde{e}))$, the leading term of $\varphi (\tilde{e})$ in a module term ordering on $\TT^n \langle \tilde{\mathcal{F}}\rangle$.

If $\varphi (\tilde{e}) = 0$, then we cannot recover the weight of $\tilde{e}$ using the map $\varphi$. Indeed, if we expect to use $\varphi$ to extract information about weight vectors of $E$, then $\varphi$ should preserve such information.

\begin{defin}\label{minimality_def}
Let $\varphi \colon E\rightarrow F$ be a map of free modules in
$\mod_{\righttoleftarrow T} A$ and let $\mathcal{E} = \{e_1,\ldots,e_r\}$
be a homogeneous basis of $E$.
The map $\varphi$ is called \emph{minimal} if $\varphi (e_1),\ldots,
\varphi (e_r)$ are minimal generators of $\im \varphi$.
\end{defin}

It is clear that the definition above does not depend on the choice
of homogeneous basis of $E$.
As we will see from the next proposition, the notion of minimal
map is well suited to preserve information on weight vectors.

\begin{prop}\label{minimality}
Let $\varphi \colon E\rightarrow F$ be a map of free modules in $\mod_{\righttoleftarrow T} A$. Let $\mathcal{E} = \{e_1,\ldots,e_r\}$ be a homogeneous basis of $E$ and let $\langle \mathcal{E}\rangle_\KK$ denote the $\KK$-vector subspace of $E$ generated by the elements of $\mathcal{E}$.
\begin{enumerate}[label=\Roman*.,ref=\Roman*,wide]
\item\label{min1} If $\varphi$ is minimal, then the restriction
  of $\varphi$ to $\langle \mathcal{E}\rangle_\KK$ is injective.
\item\label{min2} If $E$ is generated in a single degree and the
restriction of $\varphi$ to $\langle \mathcal{E}\rangle_\KK$ is
injective, then $\varphi$ is minimal.
\end{enumerate}
\end{prop}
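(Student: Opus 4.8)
The plan is to work with the canonical map $\pi\colon \im\varphi \to (\im\varphi)/\mathfrak{m}(\im\varphi)$ and to relate the two conditions (minimality of $\varphi$, injectivity of $\varphi|_{\langle\mathcal{E}\rangle_\KK}$) through Nakayama's lemma. The key observation is that the images $\varphi(e_1),\ldots,\varphi(e_r)$ generate $\im\varphi$ as an $A$-module by definition, so they are minimal generators precisely when their residue classes $\overline{\varphi(e_1)},\ldots,\overline{\varphi(e_r)}$ in $(\im\varphi)/\mathfrak{m}(\im\varphi)$ are $\KK$-linearly independent (this is the standard graded Nakayama criterion: a generating set is minimal iff it maps to a $\KK$-basis of the quotient by $\mathfrak{m}$, and since the $\varphi(e_j)$ already span, independence of the residues is equivalent to minimality).

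For part \ref{min1}, I would argue as follows. Suppose $\varphi$ is minimal and let $v = \sum_j c_j e_j \in \langle\mathcal{E}\rangle_\KK$ with $c_j\in\KK$ satisfy $\varphi(v)=0$. Then $\sum_j c_j \varphi(e_j) = 0$ in $\im\varphi$, hence $\sum_j c_j \overline{\varphi(e_j)} = 0$ in the quotient; by the linear independence just noted, all $c_j = 0$, so $v=0$. This shows $\varphi|_{\langle\mathcal{E}\rangle_\KK}$ is injective. (A subtlety: to pass from $\sum c_j\varphi(e_j)=0$ in $\im\varphi$ directly to $v=0$ one does \emph{not} need the residues — one could also note $\sum c_j e_j$ lies in $\ker\varphi \cap \langle\mathcal{E}\rangle_\KK$ and that a minimal map has $\ker\varphi \subseteq \mathfrak{m}E$, so $\langle\mathcal{E}\rangle_\KK\cap\mathfrak{m}E = 0$ forces $v=0$. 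Either route works; I'd pick whichever is cleanest given what has already been established about minimal maps and minimal generators in the cited commutative algebra references.)

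For part \ref{min2}, assume $E$ is generated in a single degree $d$, so $\langle\mathcal{E}\rangle_\KK = E_d$ and every homogeneous basis element $e_j$ has degree $d$. I would show the residues $\overline{\varphi(e_j)}$ are $\KK$-linearly independent, which by the criterion above gives minimality. Take $c_j\in\KK$ with $\sum_j c_j\overline{\varphi(e_j)} = 0$ in $(\im\varphi)/\mathfrak{m}(\im\varphi)$; then $\sum_j c_j\varphi(e_j) \in \mathfrak{m}(\im\varphi)$. The element $w := \sum_j c_j e_j$ is homogeneous of degree $d$ (or zero), and $\varphi(w)\in (\mathfrak{m}\cdot\im\varphi)\cap F_d$. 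Here is where the single-degree hypothesis does the work: $\mathfrak{m}\cdot\im\varphi$ is generated in degrees $> d$ in each relevant component — more precisely, $(\mathfrak{m}\cdot\im\varphi)_d = \sum_{i}(\mathfrak{m})_{e_i}\cdot(\im\varphi)_{d-e_i}$ where each $e_i$ is a degree of a variable, a nonzero vector with positive first nonzero coordinate, so $d - e_i$ is strictly smaller; but $\im\varphi$ is generated by the $\varphi(e_j)$ in degree $d$, hence has no nonzero elements in any degree strictly below $d$ (in the partial order induced by the positive grading, restricted to the degrees actually appearing), which forces $\varphi(w)=0$. Then injectivity of $\varphi|_{E_d}$ gives $w = 0$, hence all $c_j=0$.

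The main obstacle I anticipate is making the degree argument in part \ref{min2} fully rigorous under a general positive $\ZZ^m$-grading rather than the $\ZZ$-graded case: one must be careful that "$\im\varphi$ is generated in a single degree $d$" really does preclude elements of $\im\varphi$ in the degrees $d - e_i$ that feed into $(\mathfrak{m}\cdot\im\varphi)_d$. This follows because $\im\varphi = \sum_j A\cdot\varphi(e_j)$ with each $\varphi(e_j)$ of degree $d$, so $(\im\varphi)_{d'} \neq 0$ only if $d' = d + (\text{a nonnegative combination of variable degrees})$, and $d - e_i$ is not of that form since subtracting $e_i$ strictly decreases the first nonzero coordinate pattern dictated by the positivity of the grading. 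Alternatively, and perhaps more cleanly, one invokes that for a homogeneous map of graded free modules over a polynomially graded ring the image has a well-defined "initial degree," and the single-degree generation hypothesis pins it to $d$; I would phrase the argument so as to lean on \cite{MR2159476} for the relevant facts about positive gradings rather than reprove them. The rest is routine linear algebra and Nakayama.
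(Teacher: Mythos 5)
Your proposal is correct and follows essentially the same route as the paper's proof: both parts reduce, via the graded Nakayama lemma, to the equivalence between minimality of the generators $\varphi(e_1),\ldots,\varphi(e_r)$ and $\KK$-linear independence of their residues in $\im\varphi/\mathfrak{m}\im\varphi$, with part \ref{min2} resting on the observation that, since the $\varphi(e_i)$ all have degree $d$ and multiplication by $\mathfrak{m}$ strictly increases degree under the positive grading, an element of $(\mathfrak{m}\im\varphi)_d$ must be zero. Your extra care about the $\ZZ^m$-graded case (a nonzero nonnegative combination of variable degrees is nonzero, e.g.\ because each is lexicographically positive) is a fine way to justify the step the paper states more briefly.
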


\begin{proof}
It is obvious that $\varphi (e_1), \ldots, \varphi (e_r)$ generate $\im\varphi$. Then the graded version of Nakayama's lemma \cite[Prop. 4.1.22]{MR2159476} implies that $\varphi (e_1), \ldots, \varphi (e_r)$ are minimal generators of $\im\varphi$ if and only if the residue classes $\varphi (e_1) + \mathfrak{m} \im\varphi, \ldots, \varphi (e_r) + \mathfrak{m} \im\varphi$ are linearly independent in $\im\varphi /\mathfrak{m} \im\varphi$.

For the proof of \ref{min1}, assume $\varphi$ is minimal.
If $c_1,\ldots,c_r \in \KK$, then:
\begin{gather*}
\varphi \left( \sum_{i=1}^r c_i e_i \right) = 0 \Longrightarrow \sum_{i=1}^r c_i \varphi ( e_i ) = 0 \Longrightarrow \\
\Longrightarrow \sum_{i=1}^r c_i \varphi ( e_i ) \in \mathfrak{m} \im\varphi
\Longrightarrow  \sum_{i=1}^r c_i [\varphi \left( e_i \right) + \mathfrak{m} \im\varphi ] = \mathfrak{m} \im\varphi.
\end{gather*}
By our preliminary observation, $\varphi (e_1) + \mathfrak{m} \im\varphi, \ldots, \varphi (e_r) + \mathfrak{m} \im\varphi$ are linearly independent in $\im\varphi /\mathfrak{m} \im\varphi$.
This implies $\forall i\in\{1,\ldots,r\}$ $c_i = 0$, which shows
the restriction of $\varphi$ to $\langle \mathcal{E}\rangle_\KK$ is injective.

For the proof of \ref{min2}, assume $E$ is generated in degree
$d\in\ZZ^m$ and the restriction of $\varphi$ to
$\langle \mathcal{E}\rangle_\KK$ is injective.
If $c_1,\ldots,c_r \in \KK$, then:
\[
\sum_{i=1}^r c_i [\varphi \left( e_i \right) + \mathfrak{m}
\im\varphi ] = \mathfrak{m} \im\varphi \Longrightarrow
\sum_{i=1}^r c_i \varphi ( e_i ) \in \mathfrak{m} \im\varphi.
\]
Note that the coefficients $c_i$ have degree zero; however
multiplying elements of $\im\varphi$ by coefficients in 
$\mathfrak{m}$ increases the degree.
Since all elements $\varphi (e_1),\ldots,\varphi (e_r)$ have
degree $d$, the only possible way
$\sum_{i=1}^r c_i \varphi ( e_i ) \in \mathfrak{m} \im\varphi$
is if it is zero.
Now
\[
\sum_{i=1}^r c_i \varphi ( e_i ) = 0 \Longrightarrow
\varphi \left( \sum_{i=1}^r c_i e_i \right) = 0;
\]
thus $\forall i\in\{1,\ldots,r\}$ $c_i =0$ because $\varphi$ restricted to
$\langle \mathcal{E}\rangle_\KK$ is injective.
This shows $\varphi (e_1) + \mathfrak{m} \im\varphi, \ldots, \varphi (e_r) + \mathfrak{m} \im\varphi$ are linearly independent in $\im\varphi /\mathfrak{m} \im\varphi$.
Using our preliminary considerations, we deduce
$\varphi (e_1),\ldots, \varphi (e_r)$ are minimal generators of
$\im \varphi$, and therefore $\varphi$ is minimal.
\end{proof}

\begin{remark}
The differentials in a minimal free resolution are minimal maps
by construction.
\end{remark}

The ideas of this subsection are summarized in the following result.
\begin{prop}\label{summary_hbwv}
Let $\varphi \colon E\rightarrow F$ be a minimal map of free modules in $\mod_{\righttoleftarrow T} A$. Let $\tilde{\mathcal{F}} = \{\tilde{f}_1,\ldots,\tilde{f}_s\}$ be a homogeneous basis of weight vectors of $F$ and assume that $\TT^n \langle \tilde{\mathcal{F}} \rangle$ is equipped with a module term ordering. If $\tilde{\mathcal{E}} = \{\tilde{e}_1,\ldots,\tilde{e}_r\}$ is a homogeneous basis of weight vectors of $E$, then $\forall i\in \{1,\ldots,r\}$
\begin{itemize}[wide]
\item $\LT (\varphi (\tilde{e}_i))$ is a weight vector of $F$ and $\weight (\LT (\varphi (\tilde{e}_i))) = \weight (\tilde{e}_i)$;
\item if $\LT (\varphi (\tilde{e}_i)) = \hat{t} \tilde{f}_{\hat{\jmath}}$, for some term $\hat{t} \tilde{f}_{\hat{\jmath}} \in \TT^n \langle \tilde{\mathcal{F}} \rangle$, then
\[\weight (\tilde{e}_i) = \weight (\hat{t}) + \weight (\tilde{f}_{\hat{\jmath}}).\]
\end{itemize}
\end{prop}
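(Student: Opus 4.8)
The statement is essentially a repackaging of the three main results of this subsection—Proposition~\ref{weight_in_equivariant_map}, Proposition~\ref{weight_of_poly}, and Corollary~\ref{weight_in_homogeneous_basis_via_term}—together with the observation that a minimal map cannot kill any basis vector. The plan is to fix $i \in \{1,\ldots,r\}$ and first argue that $\varphi(\tilde{e}_i) \neq 0$, so that its leading term is defined, and then apply the corollary with the specific choice of term $\hat{t}\tilde{f}_{\hat{\jmath}} = \LT(\varphi(\tilde{e}_i))$.

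First I would establish nonvanishing: since $\tilde{e}_i$ is a member of the homogeneous basis $\tilde{\mathcal{E}}$, it lies in $\langle\tilde{\mathcal{E}}\rangle_\KK$, and by Proposition~\ref{minimality}\ref{min1}, the restriction of $\varphi$ to $\langle\tilde{\mathcal{E}}\rangle_\KK$ is injective because $\varphi$ is minimal; as $\tilde{e}_i \neq 0$, we conclude $\varphi(\tilde{e}_i) \neq 0$. Consequently $\Supp_{\tilde{\mathcal{F}}}(\varphi(\tilde{e}_i))$ is nonempty and $\LT(\varphi(\tilde{e}_i))$ is a well-defined element of $\TT^n\langle\tilde{\mathcal{F}}\rangle$, say $\LT(\varphi(\tilde{e}_i)) = \hat{t}\tilde{f}_{\hat{\jmath}}$.

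Next I would handle the first bullet. Because $\varphi$ is $T$-equivariant, $\tilde{e}_i$ is a weight vector, and $\varphi(\tilde{e}_i) \neq 0$, Proposition~\ref{weight_in_equivariant_map} gives that $\varphi(\tilde{e}_i)$ is a weight vector of $F$ with $\weight(\varphi(\tilde{e}_i)) = \weight(\tilde{e}_i)$. Writing $\varphi(\tilde{e}_i) = \sum_{j=1}^s p_j\tilde{f}_j$ in the basis $\tilde{\mathcal{F}}$, Proposition~\ref{weight_in_homogeneous_basis} shows each nonzero $p_j$ is a weight vector with $\weight(p_j) = \weight(\tilde{e}_i) - \weight(\tilde{f}_j)$. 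Since $\hat{t}\tilde{f}_{\hat{\jmath}} \in \Supp_{\tilde{\mathcal{F}}}(\varphi(\tilde{e}_i))$ we have $\hat{t} \in \Supp(p_{\hat{\jmath}})$, so Proposition~\ref{weight_of_poly} applied to the weight vector $p_{\hat{\jmath}}$ gives $\weight(\hat{t}) = \weight(p_{\hat{\jmath}})$. By Proposition~\ref{weight_in_tensor_product} (or directly from compatibility of multiplication), the term $\hat{t}\tilde{f}_{\hat{\jmath}} = \LT(\varphi(\tilde{e}_i))$ is a weight vector of $F$ with weight $\weight(\hat{t}) + \weight(\tilde{f}_{\hat{\jmath}}) = \weight(p_{\hat{\jmath}}) + \weight(\tilde{f}_{\hat{\jmath}}) = \weight(\tilde{e}_i)$, which proves the first bullet. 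The second bullet is then simply the identity $\weight(\tilde{e}_i) = \weight(\hat{t}) + \weight(\tilde{f}_{\hat{\jmath}})$ read off from the same chain, which is exactly the content of Corollary~\ref{weight_in_homogeneous_basis_via_term} applied to $\tilde{e} = \tilde{e}_i$ and the term $\hat{t}\tilde{f}_{\hat{\jmath}}$.

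There is no real obstacle here: the only point requiring care is the nonvanishing of $\varphi(\tilde{e}_i)$, which is precisely why the hypothesis that $\varphi$ is minimal is needed—without it, a basis vector could map to zero and its weight would be invisible to $\varphi$. Everything else is a bookkeeping assembly of results already proved, so the proof can be written very compactly by citing Proposition~\ref{minimality}\ref{min1} and then Corollary~\ref{weight_in_homogeneous_basis_via_term}.
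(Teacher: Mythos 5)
Your proposal is correct and matches the paper's intent exactly: the paper offers no separate proof of Proposition \ref{summary_hbwv}, presenting it as a summary of the subsection, and your argument is precisely the intended assembly — minimality via Proposition \ref{minimality}.\ref{min1} to ensure $\varphi(\tilde{e}_i)\neq 0$ so the leading term exists, then Proposition \ref{weight_in_equivariant_map}, Proposition \ref{weight_in_homogeneous_basis}, Proposition \ref{weight_of_poly}, and Corollary \ref{weight_in_homogeneous_basis_via_term} applied to the term $\LT(\varphi(\tilde{e}_i))$. No gaps; the write-up could be spliced in as the proof essentially verbatim.
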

If $\varphi$ is provided as a matrix written with respect to $\tilde{\mathcal{E}}$ and $\tilde{\mathcal{F}}$, then this proposition gives a concrete method to obtain a complete list of weights of $E$ using $\varphi$.

\begin{ex}\label{ex5}
We resume the discussion of our running example from section \ref{basics} and we look at the resolution from example \ref{ex4}. The free module $F_j = \bigwedge^j V \otimes_\CC A(-j)$ has a homogeneous basis of weight vectors
\[\tilde{\mathcal{F}}_j = \{v_{i_1} \wedge \ldots \wedge v_{i_j} \otimes 1_A \mid 1 \leq i_1 < \ldots < i_j \leq 3\},\]
where $\{v_1,v_2,v_3\}$ is the coordinate basis of $V=\CC^3$. Since the weights of $v_1,v_2,v_3$ are $\omega_1 =(1,0,0)$, $\omega_2 =(0,1,0)$, $\omega_3 =(0,0,1)$ respectively, then
\[\weight (v_{i_1} \wedge \ldots \wedge v_{i_j} \otimes 1_A ) = \omega_{i_1} + \ldots + \omega_{i_j}.\]
We will show how to obtain the same weights using proposition \ref{summary_hbwv}.

Using the homogeneous bases $\tilde{\mathcal{F}}_j$, the maps of the Koszul complex can be written in matrix form:
\[0\to F_3
\xrightarrow{
	\left(
	\begin{smallmatrix}
	x_3\\
	-x_2\\
	x_1
	\end{smallmatrix}
	\right)
}
F_2
\xrightarrow{
	\left(
	\begin{smallmatrix}
	-x_2 & -x_3 & 0 \\
	x_1 & 0 & -x_3\\
	0 & x_1 & x_2
	\end{smallmatrix}
	\right)
}
F_1
\xrightarrow{
	\left(
	\begin{smallmatrix}
	x_1 & x_2 & x_3
	\end{smallmatrix}
	\right)
}
F_0
\]
Assume $A$ is endowed with a lexicographic term ordering such that $x_1>x_2>x_3$ and the modules $F_j$ are equipped with the term over position up module term ordering. Notice that $F_0=A$ and that $G$ acts trivially on its basis element $1_A$. Therefore:
\begin{align*}
&\LT (d_1 (v_1 \otimes 1_A)) = x_1 1_A \Rightarrow \weight (v_1 \otimes 1_A) = (1,0,0),\\
&\LT (d_1 (v_2 \otimes 1_A)) = x_2 1_A \Rightarrow \weight (v_2 \otimes 1_A) = (0,1,0),\\
&\LT (d_1 (v_3 \otimes 1_A)) = x_3 1_A \Rightarrow \weight (v_3 \otimes 1_A) = (0,0,1).
\end{align*}
Knowing the weights of the elements in $\tilde{\mathcal{F}}_1$, we can proceed with the map $d_2$:
\begin{align*}
\LT &(d_2 (v_1 \wedge v_2 \otimes 1_A)) = x_1 v_2 \otimes 1_A \Rightarrow \\
&\Rightarrow \weight (v_1 \wedge v_2 \otimes 1_A) = (1,0,0) + (0,1,0) = (1,1,0), \\
\LT &(d_2 (v_1 \wedge v_3 \otimes 1_A)) = x_1 v_3 \otimes 1_A \Rightarrow \\
&\Rightarrow \weight (v_1 \wedge v_3 \otimes 1_A) = (1,0,0) + (0,0,1) = (1,0,1),\\
\LT &(d_2 (v_2 \wedge v_3 \otimes 1_A)) = x_2 v_3 \otimes 1_A \Rightarrow \\
&\Rightarrow \weight (v_2 \wedge v_3 \otimes 1_A) = (0,1,0) + (0,0,1) = (0,1,1).
\end{align*}
Finally the map $d_3$:
\begin{gather*}
\LT (d_3 (v_1 \wedge v_2 \wedge v_3 \otimes 1_A)) = x_1 v_2 \wedge v_3 \otimes 1_A \Rightarrow\\
\Rightarrow \weight (v_1 \wedge v_2 \wedge v_3 \otimes 1_A) = (1,0,0) + (0,1,1) = (1,1,1).
\end{gather*}
The weights we found can be used to identify each module $F_j$ with $\bigwedge^j V \otimes_\CC A(-j)$.
\end{ex}

\subsection{The general case}\label{general_case}
In general, we cannot expect every map $\varphi \colon E\rightarrow F$ of free modules in $\mod_{\righttoleftarrow T} A$ to be written with respect to homogeneous bases of weight vectors. Nevertheless, under reasonable assumptions, it is still possible to recover the weights of $E$ using the weights of $F$ and the map $\varphi$.
\begin{ex}\label{ex6}
To illustrate an issue that can occur, we write the matrix of the map $d_2$ from example \ref{ex4} with respect to the following homogeneous basis of $F_2$
\[\{(v_1 \wedge v_2 - v_1 \wedge v_3) \otimes 1_A, (v_1 \wedge v_2 + v_1 \wedge v_3) \otimes 1_A, (v_1 \wedge v_3 + v_2 \wedge v_3) \otimes 1_A\},\]
which is created using linear combinations of elements in the homogeneous basis $\tilde{\mathcal{F}}_2$ from example \ref{ex5}. The matrix looks like this:
\[
	\begin{pmatrix}
	-x_2 + x_3 & -x_2-x_3 & -x_3 \\
	x_1 & x_1 & -x_3\\
	-x_1 & x_1 & x_1+x_2
	\end{pmatrix}.
\]
Observe that all columns have leading term $x_1 v_3 \otimes 1_A$. If we proceed to calculate weights as indicated in proposition \ref{summary_hbwv}, we will obtain the weight $(1,0,1)$ three times, which does not fit the representation $\bigwedge^2 V$.
\end{ex}
The situation presented in the example above is somewhat artificial, however it highlights the following fact: unless we are working with homogeneous bases of weight vectors, we are not guaranteed to obtain meaningful lists of weights. The issue is especially relevant in the case of free resolutions constructed using computational techniques. In fact, many algorithms that compute free resolutions express the matrices of the differentials with respect to homogeneous bases that do not, typically, consist of weight vectors.

We adopt the notation of \cite[p. 503]{MR1878556} for the matrix of a map of free modules. Let $\varphi \colon E\rightarrow F$ be a map of free modules; let $\mathcal{E} = \{e_1,\ldots,e_r\}$ and $\mathcal{F} = \{f_1,\ldots,f_s\}$ be homogeneous bases of $E$ and $F$ respectively. If $\varphi (e_j) = \sum_{i=1}^s a_{i,j} f_i$, then $\mathcal{M}^{\mathcal{E}}_{\mathcal{F}} (\varphi) = (a_{i,j})$ is the matrix of $\varphi$ with respect to $\mathcal{E}$ and $\mathcal{F}$.
If $\mathcal{F}$ and $\mathcal{F}'$ are homogeneous bases of a free module $F$, then $\mathcal{M}^{\mathcal{F}}_{\mathcal{F}'} (\id_F)$ is the matrix of the change of basis from $\mathcal{F}$ to $\mathcal{F}'$.

Here is our main result.
\begin{thm}\label{main_thm}
Let $\varphi \colon E\rightarrow F$ be a minimal map of free modules in $\mod_{\righttoleftarrow T} A$.
Suppose that:
\begin{enumerate}[label=H\arabic*.,ref=H\arabic*]
\item\label{h1} $\mathcal{F} = \{f_1,\ldots,f_s\}$ is a homogeneous basis of $F$ and $\TT^n \langle \mathcal{F} \rangle$ is equipped with a position up module term ordering;
\item\label{h2} $F$ admits a homogeneous basis of weight vectors $\tilde{\mathcal{F}} = \{\tilde{f}_1,\ldots,\tilde{f}_s\}$ such that $\mathcal{M}^{\mathcal{F}}_{\tilde{\mathcal{F}}} (\id_F)$ is upper triangular;
\item\label{h3} $E$ admits a homogeneous basis $\mathcal{E} = \{e_1,\ldots,e_r\}$ such that
\[ \LT (\varphi (e_1)) < \ldots < \LT (\varphi (e_r))\]
in $\TT^n \langle \mathcal{F} \rangle$.
\end{enumerate}
Then:
\begin{enumerate}[label=T\arabic*.,ref=T\arabic*]
\item\label{t1} $E$ admits a homogeneous basis of weight vectors $\tilde{\mathcal{E}} = \{\tilde{e}_1,\ldots,\tilde{e}_r\}$ such that $\mathcal{M}^{\mathcal{E}}_{\tilde{\mathcal{E}}} (\id_E)$ is upper triangular;
\item\label{t2} if $\LT (\varphi (e_i)) = \hat{t} {f}_{\hat{\jmath}}$, for some $\hat{t} {f}_{\hat{\jmath}} \in \TT^n \langle \mathcal{F} \rangle$, then
\[\weight (\tilde{e}_i) = \weight (\hat{t}) + \weight (\tilde{f}_{\hat{\jmath}}).\]
\end{enumerate}
\end{thm}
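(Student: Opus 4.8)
\emph{Plan.} The idea is to reduce to the case where $E$ is generated in a single degree, to build a $T$-stable complete flag in that degree refining the basis $\mathcal{E}$, and to read off the weights from equivariance. Everything hinges on one fact — \emph{leading terms with respect to $\mathcal{F}$ are $T$-invariant} — and this is where \ref{h1} and \ref{h2} come in.

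First I would carry out the reduction. By proposition \ref{free_mod_structure}, $E\cong (E/\mathfrak{m}E)\otimes_\KK A$, and splitting $E/\mathfrak{m}E=\bigoplus_d V_d$ into graded pieces gives $E\cong\bigoplus_d(V_d\otimes_\KK A)$ in $\mod_{\righttoleftarrow T} A$. The residue classes $\varphi(e_i)+\mathfrak{m}\im\varphi$ are linearly independent and separate by degree, so the restriction $\varphi_d$ of $\varphi$ to each summand $V_d\otimes_\KK A$ is again minimal, and the $\LT(\varphi(e_i))$ with $\deg e_i=d$ form an increasing sublist; assembling triangular matrices obtained for the summands yields a triangular matrix for $E$, since each new basis vector in degree $d$ is a combination of the $e_k$ of degree $d$ with smaller index. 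So one may assume $E$ is generated in a single degree $d$; then $E_d=\langle\mathcal{E}\rangle_\KK$ is a finite dimensional $T$-stable subspace, and by proposition \ref{minimality} the restriction $\psi:=\varphi|_{E_d}\colon E_d\to F_d$ is a $T$-equivariant \emph{injection} of representations.

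Next comes the key lemma: for every homogeneous $v\in F$ and every $\tau\in T$, $\LT(\tau\cdot v)=\LT(v)$. Since all variables are weight vectors, $\tau$ rescales every monomial of $A$ by a non-zero scalar, so $\Supp(\tau\cdot p)=\Supp(p)$ for $p\in A$; and by \ref{h2} the submodule $A\langle f_1,\ldots,f_j\rangle$ coincides with $A\langle\tilde{f}_1,\ldots,\tilde{f}_j\rangle$, which is $T$-stable, so $\tau\cdot f_j=\chi_j(\tau)f_j+(\text{a scalar combination of }f_1,\ldots,f_{j-1})$ where $\chi_j:=\weight(\tilde f_j)$. Writing $v=\sum_j c_jf_j$, substituting, and checking each of the two position-up orderings separately (here \ref{h1} is essential) shows the term part and the position of $\LT(v)$ are unchanged. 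Hence, for any module term $L$, the subspace $\{w\in F_d:\LT(w)\le L\}\cup\{0\}$ of $F_d$ is $T$-stable, and therefore so is $E_d^{(i)}:=\psi^{-1}\big(\{w\in F_d:\LT(w)\le\LT(\varphi(e_i))\}\cup\{0\}\big)$. Injectivity of $\psi$ together with the strict increase of the $\LT(\varphi(e_i))$ in \ref{h3} forces $E_d^{(i)}=\langle e_1,\ldots,e_i\rangle_\KK$, so $E_d^{(1)}\subset\cdots\subset E_d^{(r)}=E_d$ is a complete $T$-stable flag. As representations of $T$ are semisimple it splits, $E_d=\bigoplus_iL_i$ with $\dim_\KK L_i=1$ and $L_1\oplus\cdots\oplus L_i=E_d^{(i)}$; picking a weight vector $\tilde e_i\in L_i$, its $e_i$-coordinate is a non-zero scalar (as $\tilde e_i\notin E_d^{(i-1)}$), so $\mathcal{M}^{\mathcal{E}}_{\tilde{\mathcal{E}}}(\id_E)$ is upper triangular, which is \ref{t1}. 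For \ref{t2}, write $\tilde e_i=c_ie_i+\sum_{k<i}c_{ki}e_k$ with scalars $c_i\neq 0$: every term of $\varphi(\tilde e_i)$ occurs in some $\varphi(e_j)$ with $j\le i$, hence is $\le\LT(\varphi(e_i))=\hat{t}_if_{\hat{\jmath}_i}$, and since $\hat{t}_if_{\hat{\jmath}_i}\notin\Supp_{\mathcal{F}}(\varphi(e_k))$ for $k<i$ its coefficient in $\varphi(\tilde e_i)$ is $c_i$ times the leading coefficient of $\varphi(e_i)$, which is non-zero, so $\LT(\varphi(\tilde e_i))=\hat{t}_if_{\hat{\jmath}_i}$. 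Moreover $\tilde e_i$ is a weight vector and $\varphi(\tilde e_i)\neq 0$ by injectivity of $\psi$, so by proposition \ref{weight_in_equivariant_map} it is a weight vector of weight $\weight(\tilde e_i)$; it then remains to prove that any homogeneous weight vector $v\in F$ with $\LT(v)=\hat{t}f_{\hat{\jmath}}$ satisfies $\weight(\hat{t})+\weight(\tilde f_{\hat{\jmath}})=\weight(v)$. For this I would expand $\tau\cdot v=\weight(v)(\tau)\,v$ in the basis $\mathcal{F}$ using the formula for $\tau\cdot f_j$ above and compare the coefficient of the monomial $\hat{t}$ in the $\hat{\jmath}$-th coordinate of both sides — by \ref{h1} this coefficient receives no contribution from the $f_j$ with $j>\hat{\jmath}$ — obtaining $\chi_{\hat{\jmath}}(\tau)\mu(\tau)=\weight(v)(\tau)$ for all $\tau$, with $\mu$ the character of $\hat{t}$; applying this to $v=\varphi(\tilde e_i)$ gives \ref{t2}.

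The hard part will be the bookkeeping inside the key lemma and the final claim: one must show that the position and term extracted from the \emph{computed} basis $\mathcal{F}$ carry exactly the weight information stored in the weight-vector basis $\tilde{\mathcal{F}}$, even though a term-over-position ordering together with a merely upper triangular (not diagonal) change of basis genuinely does move leading terms around. Verifying this cleanly for both admissible orderings is the technical crux, and it is precisely what makes hypotheses \ref{h1} and \ref{h2} do their work.
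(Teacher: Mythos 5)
Your proposal is correct and reaches both \ref{t1} and \ref{t2}, but its core is genuinely different from the paper's proof. You share the opening reduction to a domain generated in a single degree and the use of minimality via proposition \ref{minimality} to get injectivity of $\varphi|_{E_d}$, and your closing weight computation (comparing the coefficient of $\hat{t}$ in the $\hat{\jmath}$-th $\mathcal{F}$-coordinate of $\tau\cdot v=\chi(\tau)v$) is essentially a coordinate-level re-proof of the paper's proposition \ref{weight_transfer}. Where you diverge is in how the weight-vector basis $\tilde{\mathcal{E}}$ with triangular change of basis is produced: the paper builds it by a Gr\"obner-basis argument (proposition \ref{gb_basis} applied to each weight space, assembled in corollary \ref{nice_basis_wv}), and then matches $\LT(\varphi(\tilde{e}_i))$ with $\LT(\varphi(e_i))$ via the Hilbert-function equality $\dim(\LT(\im\varphi))_d=\dim(\im\varphi)_d$ and uniqueness of minimal generators of a monomial module; you instead prove the key lemma that the $T$-action preserves leading terms ($\LT(\tau\cdot v)=\LT(v)$ for position-up orderings, using \ref{h1}, \ref{h2}), deduce that the sets $\{w\in F_d:\LT(w)\leq L\}\cup\{0\}$ are $T$-stable, pull back along the injective equivariant map to get a complete $T$-stable flag with steps $\langle e_1,\ldots,e_i\rangle_\KK$, and split it by semisimplicity of torus representations; the equality of leading terms $\LT(\varphi(\tilde{e}_i))=\LT(\varphi(e_i))$ then falls out of the triangular relation directly, with no monomial-module or Hilbert-function input. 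Your route is more representation-theoretic and self-contained (it also subsumes corollary \ref{same_LT_same_weight}), while the paper's route is tailored to the algorithms, since the basis it constructs is exactly the truncated reduced Gr\"obner basis the software computes. One caveat, which does not put you below the paper's own level of rigor: your claim that $\tau\cdot f_j$ equals $\chi_j(\tau)f_j$ plus a \emph{scalar} combination of $f_1,\ldots,f_{j-1}$ implicitly assumes the upper triangular matrix in \ref{h2} has entries in $\KK$ (so that $\deg f_j=\deg\tilde{f}_i$ whenever they mix); the paper's proof of proposition \ref{weight_transfer} makes exactly the same tacit assumption when it treats the $u_{i,j}$ as scalars, and it holds in all the intended applications, so your argument and the paper's stand or fall together on this point.
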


In essence, the theorem says the weights of $E$ can be recovered using the map $\varphi$ and a homogeneous basis $\mathcal{E}$ of $E$ as in \ref{h3}. This basis is connected to a homogeneous basis of weight vectors of $E$ by a triangular change of basis, as long as a similar property holds for suitable bases of $F$. While it may seem that the hypotheses of the theorem are quite restrictive, they are easily met in general enough settings where we wish to apply our algorithm.

We postpone the proof of theorem \ref{main_thm} to \S\ref{proof_main_thm}.

\begin{ex}\label{ex7}
Similarly to example \ref{ex6}, we write the matrix of the map $d_2$ from example \ref{ex4} with respect to the following homogeneous basis of $F_2$
\[\{v_2 \wedge v_3 \otimes 1_A, v_1 \wedge v_2 \otimes 1_A, (v_1 \wedge v_2 + v_1 \wedge v_3) \otimes 1_A\}.\]
We get the following matrix:
\[
	\begin{pmatrix}
	0 & -x_2 & -x_2-x_3\\
	-x_3 & x_1 & x_1\\
	x_2 & 0 & x_1
	\end{pmatrix}.
\]
Now the columns have the same leading terms as the matrix for $d_2$ that was written in example \ref{ex5}, although permuted to form an increasing sequence. Since the homogeneous basis $\tilde{\mathcal{F}}_1$ used for $F_1$ consists of weight vectors and the module term ordering used is position up, the hypotheses of theorem \ref{main_thm} are satisfied. The calculation for the weights of $F_2$ proceeds as in example \ref{ex5}, thus we obtain meaningful weights even though the homogeneous basis we are using for $F_2$ does not consist of weight vectors. The change of basis from this homogeneous basis to $\tilde{\mathcal{F}}_2$ is given by the matrix
\[
	\begin{pmatrix}
	1 & -1 & 0\\
	0 & 1 & 0\\
	0 & 0 & 1
	\end{pmatrix}.
\]
which is upper triangular.
\end{ex}

For another example of theorem \ref{main_thm} in action,
we point the reader to example \ref{exa:1} which recasts
the result of the theorem in the context of a more
algorithmic approach.

\begin{remark}
The module term ordering on $\TT^n \langle \mathcal{F}\rangle$ in \ref{h1} could be taken to be a position down ordering instead of position up. The statement of theorem \ref{main_thm} would need to be modified as indicated below:
\begin{itemize}[wide]
\item in \ref{h1}, $\TT^n \langle \mathcal{F}\rangle$ is equipped with a position down module term ordering;
\item in \ref{h2}, $\mathcal{M}^{\mathcal{F}}_{\tilde{\mathcal{F}}} (\id_F)$ is lower triangular;
\item in \ref{h3}, $\LT (\varphi (e_1)) > \ldots > \LT (\varphi (e_r))$;
\item in \ref{t1}, $\mathcal{M}^{\mathcal{E}}_{\tilde{\mathcal{E}}} (\id_E)$ is lower triangular.
\end{itemize}
It might be possible to state a more general version of theorem \ref{main_thm} that holds with any module term ordering on $\TT^n \langle \mathcal{F}\rangle$.
However, the position up/down module term orderings seem to be the most commonly implemented in software, with one of them often being the default option. Therefore we decided to take a practical approach and limit ourselves to those scenarios that matter for the applications.
\end{remark}

\subsection{Proof of the main theorem}\label{proof_main_thm}

We begin this section by outlining the strategy of our 
proof of theorem \ref{main_thm}.
\begin{enumerate}[label=\arabic*),wide]
\item We illustrate how to calculate the weight of a weight
  vector in $E$, using the map $\varphi$ and the triangular
  change of basis provided by hypothesis \ref{h2}
  (proposition \ref{weight_transfer}).
  The result leverages our work on bases of weight vectors
  from section \ref{bases_of_weight_vectors}.
\item We construct a homogeneous basis of weight
  vectors of $E$, with the additional property
  that the images of the basis elements under $\varphi$
  all have different leading terms.
  The result is obtained in corollary \ref{nice_basis_wv}
  and uses a Gr\"obner basis argument developed in
  \ref{gb_basis}.
\item We show that given two bases of $E$, if the images of their
  elements under $\varphi$ all have different leading terms, then
  those leading terms are the same for both bases.  This result
  follows from an observation in commutative algebra, namely that the
  minimal generators of a monomial module are uniquely determined.
\item We compare the homogeneous basis of $E$ in hypothesis
  \ref{h3} with the homogeneous basis of weight vectors of $E$
  constructed earlier.
  Because of the way the leading terms are sorted
  and because the leading terms are the same,
  we conclude the matrix of the change of basis from one basis
  to the other must be upper triangular.
  Part \ref{t1} of the thesis follows.
\item Finally, we apply our weight calculation formula to the
  situation at hand to deduce part \ref{t2} of the thesis.
\end{enumerate}

The first step towards the proof of theorem \ref{main_thm} is to show how a triangular change of basis can be exploited to calculate the weight of a homogeneous weight vector.

\begin{prop}\label{weight_transfer}
Let $\varphi \colon E\rightarrow F$ be a map of free modules in $\mod_{\righttoleftarrow T} A$.
Suppose hypotheses \ref{h1} and \ref{h2} of theorem \ref{main_thm} hold.
If $\tilde{e}\in E$ is a homogeneous weight vector with $\LT (\varphi (\tilde{e})) = \hat{t} f_{\hat{\jmath}}$, for some $\hat{t} {f}_{\hat{\jmath}} \in \TT^n \langle \mathcal{F} \rangle$, then:
\begin{enumerate}[label=\Roman*.,wide]
\item $\hat{t} \tilde{f}_{\hat{\jmath}}\in\Supp_{\tilde{\mathcal{F}}} (\varphi (\tilde{e}))$;
\item $\weight (\tilde{e}) = \weight (\hat{t}) + \weight (\tilde{f}_{\hat{\jmath}})$.
\end{enumerate}
\end{prop}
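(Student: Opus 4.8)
The plan is to expand $\varphi(\tilde e)$ simultaneously with respect to the homogeneous basis $\mathcal F$ and to the homogeneous basis of weight vectors $\tilde{\mathcal F}$, to use the triangular change of basis of \ref{h2} together with the position up ordering of \ref{h1} in order to certify that the particular term $\hat t\tilde f_{\hat\jmath}$ lies in $\Supp_{\tilde{\mathcal F}}(\varphi(\tilde e))$, and then to read off part II from corollary \ref{weight_in_homogeneous_basis_via_term}. Thus part I carries essentially all of the content, and part II is a one-line consequence.

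For part I, first note that the mere existence of $\LT(\varphi(\tilde e))$ forces $\varphi(\tilde e)\neq 0$. Write
\[\varphi(\tilde e)=\sum_{j=1}^{s}p_j f_j=\sum_{j=1}^{s}\tilde p_j\tilde f_j\]
with the $p_j,\tilde p_j$ homogeneous in $A$; the hypothesis $\LT(\varphi(\tilde e))=\hat t f_{\hat\jmath}$ says precisely that $\hat t\in\Supp(p_{\hat\jmath})$ and that $\hat t f_{\hat\jmath}$ is the maximum of $\Supp_{\mathcal F}(\varphi(\tilde e))$. By \ref{h2} the matrix $(a_{i,j}):=\mathcal M^{\mathcal F}_{\tilde{\mathcal F}}(\id_F)$ is upper triangular; since it is invertible over $A$ with homogeneous entries, each diagonal entry $a_{i,i}$ is a homogeneous unit of $A$, hence a nonzero scalar. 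Substituting $f_j=\sum_{i\le j}a_{i,j}\tilde f_i$ into the first expansion and comparing coefficients with the second gives
\[\tilde p_i=\sum_{j\ge i}a_{i,j}\,p_j\qquad(1\le i\le s),\]
and in particular $\tilde p_{\hat\jmath}=a_{\hat\jmath,\hat\jmath}\,p_{\hat\jmath}+\sum_{j>\hat\jmath}a_{\hat\jmath,j}\,p_j$.

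The decisive point is that the summands with $j>\hat\jmath$ do not reintroduce the monomial $\hat t$ into $\tilde p_{\hat\jmath}$. For a position\,--\,over\,--\,term ordering this is clean: such orderings rank every term at a higher position above every term at a lower one, so the maximality of $\hat t f_{\hat\jmath}$ in $\Supp_{\mathcal F}(\varphi(\tilde e))$ forces $p_j=0$ for all $j>\hat\jmath$, whence $\tilde p_{\hat\jmath}=a_{\hat\jmath,\hat\jmath}\,p_{\hat\jmath}$ is a nonzero scalar multiple of $p_{\hat\jmath}$ and $\hat t\in\Supp(\tilde p_{\hat\jmath})$. For a term\,--\,over\,--\,position ordering one still obtains $\hat t\notin\Supp(p_j)$ for $j>\hat\jmath$, and it then remains to check that the off-diagonal entries $a_{\hat\jmath,j}$ cannot manufacture $\hat t$ from a strictly smaller monomial occurring in $p_j$; the cleanest sufficient condition for this is that those entries be of degree zero, which homogeneity guarantees whenever the relevant generators of $F$ share a degree. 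In either case $\hat t$ occurs in $\tilde p_{\hat\jmath}$, i.e. $\hat t\tilde f_{\hat\jmath}\in\Supp_{\tilde{\mathcal F}}(\varphi(\tilde e))$, which is part I.

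Part II is then immediate: $\tilde{\mathcal F}$ is a homogeneous basis of weight vectors of $F$ by \ref{h2}, so corollary \ref{weight_in_homogeneous_basis_via_term}, applied to the term $\hat t\tilde f_{\hat\jmath}\in\Supp_{\tilde{\mathcal F}}(\varphi(\tilde e))$, yields $\weight(\tilde e)=\weight(\hat t)+\weight(\tilde f_{\hat\jmath})$ at once. I expect the genuine obstacle to be exactly the third paragraph: making rigorous, over whichever family of position up orderings one wants to cover, that the upper triangular change of basis disturbs neither the position $\hat\jmath$ nor the monomial $\hat t$ of the leading term, so that $\hat t$ survives into the $\hat\jmath$-th coordinate over $\tilde{\mathcal F}$ with no cancellation. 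Once that is pinned down, everything else is routine bookkeeping with supports together with the results of \S\ref{bases_of_weight_vectors}.
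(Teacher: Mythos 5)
Your strategy coincides with the paper's: expand $\varphi(\tilde e)$ with respect to both $\mathcal F$ and $\tilde{\mathcal F}$, use \ref{h1} and \ref{h2} to see that the term $\hat t\tilde f_{\hat\jmath}$ survives the triangular change of basis, and then invoke corollary \ref{weight_in_homogeneous_basis_via_term} for part II. Your treatment of the position up over term case is complete (and your remark that the diagonal entries of $\mathcal M^{\mathcal F}_{\tilde{\mathcal F}}(\id_F)$ are nonzero scalars is correct). The gap is exactly where you suspected it: in the term over position up case you prove $\hat t\notin\Supp(p_j)$ for $j>\hat{\jmath}$, you point out that positive-degree entries $a_{\hat{\jmath},j}$ could recreate the monomial $\hat t$ from smaller monomials of $p_j$ and cancel the contribution of $a_{\hat{\jmath},\hat{\jmath}}p_{\hat{\jmath}}$, you offer degree-zero entries only as a sufficient condition, and then you nonetheless assert ``in either case $\hat t$ occurs in $\tilde p_{\hat{\jmath}}$''. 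As written this is not a proof of part I in the term over position up case, which is the ordering used in essentially all of the paper's examples.

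The paper closes that step by working with $\KK$-coefficients throughout: it writes $\varphi(\tilde e)=\sum_{t,j}c_{t,j}\,t f_j$ with $c_{t,j}\in\KK$, substitutes $f_j=\sum_i u_{i,j}\tilde f_i$, and reads off the coefficient of $\hat t\tilde f_{\hat{\jmath}}$ as $\sum_j c_{\hat t,j}u_{\hat{\jmath},j}$; then $c_{\hat t,j}=0$ for $j>\hat{\jmath}$ (position up ordering plus $\LT(\varphi(\tilde e))=\hat t f_{\hat{\jmath}}$) and $u_{\hat{\jmath},j}=0$ for $j<\hat{\jmath}$ (upper triangularity) leave $c_{\hat t,\hat{\jmath}}u_{\hat{\jmath},\hat{\jmath}}\neq 0$. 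Note that this computation treats the entries $u_{i,j}$ as elements of $\KK$; in other words, \ref{h2} is to be read as a triangular change of basis with \emph{scalar} entries, which is consistent with what \ref{t1} of the theorem and the algorithms produce at every stage. Under that reading your worry evaporates and your argument closes in one line: the $j>\hat{\jmath}$ contributions to the $\hat t$-coefficient of $\tilde p_{\hat{\jmath}}$ are $a_{\hat{\jmath},j}$ times the coefficient of $\hat t$ in $p_j$, hence zero. Your caution is substantive, however: if genuinely positive-degree entries are allowed, the statement fails. Take $A=\KK[x,y]$ with $x,y$ of weights $(1,0),(0,1)$; let $\tilde f_1$ have degree $0$ and weight $(0,0)$, $\tilde f_2$ degree $1$ and weight $(0,1)$, and set $f_1=\tilde f_1$, $f_2=x\tilde f_1+\tilde f_2$, so that $\mathcal M^{\mathcal F}_{\tilde{\mathcal F}}(\id_F)$ is upper triangular with $1$'s on the diagonal and $x$ above it. If $\tilde e$ generates $E=A(-1)$ with weight $(0,1)$ and $\varphi(\tilde e)=(x+y)f_1-f_2=y\tilde f_1-\tilde f_2$, then $\varphi$ is equivariant, $\LT(\varphi(\tilde e))=x f_1$ in the term over position up ordering, yet $x\tilde f_1\notin\Supp_{\tilde{\mathcal F}}(\varphi(\tilde e))$ and $\weight(\tilde e)=(0,1)\neq\weight(x)+\weight(\tilde f_1)$. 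So the step you left open cannot be repaired in the generality you allowed; you must either assume the triangular matrix has entries in $\KK$ (as the paper's proof implicitly does) or restrict to $F$ generated in a single degree, and then finish the term over position case as above.
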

\begin{proof}
To prove I, write $\varphi(\tilde{e})$ as a $\KK$-linear combination of terms in $\TT^n \langle \mathcal{F} \rangle$:
\begin{equation}\label{weight_transfer:eq1}
\varphi(\tilde{e}) = \sum_{t\in\TT^n} \sum_{j=1}^s c_{t,j} t f_j.
\end{equation}
If $\mathcal{M}^{\mathcal{F}}_{\tilde{\mathcal{F}}} (\id_F) = (u_{i,j})$, then
\begin{equation}\label{weight_transfer:eq2}
\varphi(\tilde{e}) = \sum_{t\in\TT^n} \sum_{j=1}^s c_{t,j} t \left(\sum_{i=1}^s u_{i,j} \tilde{f}_i \right) = \sum_{t\in\TT^n} \sum_{i=1}^s \left(\sum_{j=1}^s c_{t,j} u_{i,j} \right) t \tilde{f}_i .
\end{equation}
The coefficient of $\hat{t} \tilde{f}_{\hat{\jmath}}$ in equation (\ref{weight_transfer:eq2}) is
\begin{equation}\label{weight_transfer:eq3}
\sum_{j=1}^s c_{\hat{t},j} u_{\hat{\jmath},j}.
\end{equation}
Since $\LT (\varphi (\tilde{e})) = \hat{t} f_{\hat{\jmath}}$ and $\TT^n \langle\mathcal{F}\rangle$ is equipped with a position up module term ordering (\ref{h1}), $c_{\hat{t},j} = 0$, for all $j>\hat{\jmath}$ and $c_{\hat{t},\hat{\jmath}} \neq 0$. Moreover, $u_{\hat{\jmath},\hat{\jmath}}\neq 0$ and $u_{\hat{\jmath},j} = 0$ for all $j<\hat{\jmath}$ because the matrix $\mathcal{M}^{\mathcal{F}}_{\tilde{\mathcal{F}}} (\id_F) = (u_{i,j})$ is upper triangular (\ref{h2}). Therefore the coefficient of the term $\hat{t} \tilde{f}_{\hat{\jmath}}$ is $c_{\hat{t},\hat{\jmath}} u_{\hat{\jmath},\hat{\jmath}}$. Since this coefficient is non zero, we conclude that $\hat{t} \tilde{f}_{\hat{\jmath}}\in\Supp_{\tilde{\mathcal{F}}} (\varphi (\tilde{e}))$.

Now II is an immediate consequence of I and corollary \ref{weight_in_homogeneous_basis_via_term}.
\end{proof}

\begin{coro}\label{same_LT_same_weight}
Let $\varphi \colon E\rightarrow F$ be a map of free modules in $\mod_{\righttoleftarrow T} A$.
Suppose hypotheses \ref{h1} and \ref{h2} of theorem \ref{main_thm} hold.
If $\tilde{e}_1,\tilde{e}_2\in E$ are homogeneous weight vectors and $\LT (\varphi (\tilde{e}_1)) = \LT (\varphi (\tilde{e}_2))$, then $\weight (\tilde{e}_1) = \weight (\tilde{e}_2)$.
\end{coro}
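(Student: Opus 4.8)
The plan is to deduce this directly from part II of proposition \ref{weight_transfer}, which already does all the work. Since $\LT(\varphi(\tilde{e}_1))$ and $\LT(\varphi(\tilde{e}_2))$ are assumed to exist and be equal, write their common value as a term $\hat{t} f_{\hat{\jmath}} \in \TT^n \langle \mathcal{F} \rangle$; in particular both $\varphi(\tilde{e}_1)$ and $\varphi(\tilde{e}_2)$ are nonzero, so proposition \ref{weight_transfer} applies to each of them.

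Applying part II of proposition \ref{weight_transfer} to $\tilde{e}_1$ gives $\weight(\tilde{e}_1) = \weight(\hat{t}) + \weight(\tilde{f}_{\hat{\jmath}})$, and applying it to $\tilde{e}_2$ gives $\weight(\tilde{e}_2) = \weight(\hat{t}) + \weight(\tilde{f}_{\hat{\jmath}})$. Comparing the two right-hand sides yields $\weight(\tilde{e}_1) = \weight(\tilde{e}_2)$, which is the claim. There is no real obstacle here: the corollary is essentially a restatement of the fact, already established in proposition \ref{weight_transfer}, that the weight of a homogeneous weight vector is determined by the leading term of its image together with the weights of the weight-vector basis $\tilde{\mathcal{F}}$ of $F$. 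The only point worth a word is noting that hypotheses \ref{h1} and \ref{h2} are exactly those required by proposition \ref{weight_transfer}, so they carry over verbatim.
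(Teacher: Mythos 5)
Your proof is correct and matches the paper's argument: both simply apply part II of proposition \ref{weight_transfer} to $\tilde{e}_1$ and $\tilde{e}_2$, using the common leading term $\hat{t} f_{\hat{\jmath}}$, and equate the resulting expressions $\weight(\hat{t}) + \weight(\tilde{f}_{\hat{\jmath}})$.
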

\begin{proof}
Suppose $\LT (\varphi (\tilde{e}_1)) = \LT (\varphi (\tilde{e}_2)) = \hat{t} f_{\hat{\jmath}}$. Then, by the proposition \ref{weight_transfer},
\[\weight (\tilde{e}_1) = \weight (\hat{t}) + \weight (\tilde{f}_{\hat{\jmath}}) = \weight (\tilde{e}_2).\]
\end{proof}

Let $M$ be a graded submodule of a graded free $A$-module $F$ endowed with a module term ordering. Recall that the reduced Gr\"obner basis of $M$ (defined in \cite[\S 2.4.C]{MR1790326}) consists of homogeneous elements \cite[Prop. 4.5.1]{MR2159476}. Also, given a homogeneous Gr\"obner basis $\mathcal{G}$ of $M$, set $\mathcal{G}_{\leq d} = \{g\in \mathcal{G}\mid \deg (g) \leq d\}$, the truncation of $\mathcal{G}$ at degree $d$ (see \cite[\S4.5.B]{MR2159476} for truncated Gr\"obner bases).

\begin{prop}\label{gb_basis}
Let $\varphi \colon E\rightarrow F$ be a minimal map of free modules in $\mod_{\righttoleftarrow T} A$ with $E$ generated in a single degree $d\in \ZZ^m$. Let $\mathcal{F}$ be a homogeneous basis of $F$ and assume $\TT^n \langle\mathcal{F}\rangle$ is equipped with a module term ordering.
Let $Z$ be a $\KK$-vector subspace of $E_d$ and let $M$ be the $A$-submodule of $F$ generated by $\varphi (Z)$. If $\mathcal{G}$ is the reduced Gr\"obner basis of $M$, then $\varphi^{-1} (\mathcal{G}_{\leq d})$ is a $\KK$-basis of $Z$.
\end{prop}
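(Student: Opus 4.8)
The plan is to identify $Z$ with the degree-$d$ component $M_d$ of $M$ through $\varphi$, and to recognize $\mathcal{G}_{\leq d}$ as a $\KK$-basis of $M_d$; transporting this basis back along $\varphi$ then yields the statement. The first thing I would record is a handful of degree considerations. Since $\varphi$ is homogeneous of degree $0$ and $E$ is generated in the single degree $d$, every homogeneous basis $\mathcal{E}$ of $E$ has all of its elements in degree $d$, so $\langle\mathcal{E}\rangle_\KK = E_d$; then $Z \subseteq E_d$, and by minimality of $\varphi$ together with part~\ref{min1} of proposition~\ref{minimality}, the restriction $\varphi|_{E_d}$ — hence also $\varphi|_Z$ — is $\KK$-injective. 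Moreover $\varphi(Z) \subseteq F_d$, so $M = A\cdot\varphi(Z)$ is generated in degree $d$; in particular $M_k = 0$ for all $k < d$ and $M_d = \varphi(Z)$. Since the reduced Gr\"obner basis $\mathcal{G}$ consists of homogeneous elements of $M$, each of them has degree at least $d$, so $\mathcal{G}_{\leq d}$ is precisely the set of degree-$d$ members of $\mathcal{G}$, and in particular $\mathcal{G}_{\leq d} \subseteq M_d = \varphi(Z)$.

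Next I would check that $\mathcal{G}_{\leq d}$ is a $\KK$-basis of $M_d$. Linear independence is immediate: the elements of a reduced Gr\"obner basis have pairwise distinct leading terms (indeed no one of them divides another), so no nontrivial $\KK$-linear relation can hold among them. For spanning, take $v \in M_d$ and reduce $v$ modulo $\mathcal{G}$; because $\mathcal{G}$ is a Gr\"obner basis of $M$ and $v \in M$, the reduction terminates with remainder $0$. At each step the current partial remainder is homogeneous of degree $d$, and one subtracts a $\KK$-scalar multiple of $t\,g$ with $g \in \mathcal{G}$, $t \in \TT^n$, and $t\,\LT(g) = \LT(\text{partial remainder})$; comparing degrees and using $\deg(g) \geq d$ forces $\deg(g) = d$ and $t = 1$. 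Hence $v$ is a $\KK$-linear combination of the elements of $\mathcal{G}_{\leq d}$.

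Finally, $\varphi$ restricts to a $\KK$-linear isomorphism $Z \xrightarrow{\sim} M_d$ — surjective by the definition of $M$, injective by the first paragraph — and $\varphi^{-1}$ is understood as the inverse of this isomorphism. It carries the $\KK$-basis $\mathcal{G}_{\leq d}$ of $M_d$ onto a $\KK$-basis of $Z$, which is exactly the assertion. The one point deserving real care is the observation behind the first paragraph: because $E$ is generated in a single degree, every element of $\mathcal{G}$ has degree $\geq d$, and this is what makes $\mathcal{G}_{\leq d}$ land inside $\varphi(Z)$ and, through the forced triviality of $t$ in the reduction steps, makes it span $M_d$. Everything else is a routine appeal to graded Nakayama (by way of proposition~\ref{minimality}) and to the standard division-with-remainder properties of Gr\"obner bases.
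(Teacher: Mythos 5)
Your proof is correct and follows essentially the same route as the paper: show $\mathcal{G}_{\leq d}$ is a $\KK$-basis of $M_d=\varphi(Z)$, then pull it back along $\varphi$, whose restriction to $E_d\supseteq Z$ is injective by minimality and proposition \ref{minimality}. The only cosmetic difference is that you justify the spanning of $M_d$ by tracking degrees through the division algorithm, whereas the paper argues directly that $\mathcal{G}_{\leq d}$ generates $M_d$ over $\KK$ because $\mathcal{G}$ generates $M$ and $M$ lives in degrees $\geq d$; both amount to the same degree bookkeeping.
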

\begin{proof}
Notice that $M$ is generated in degree $d$ and $M_d = \varphi (Z)$. Thus all elements of $\mathcal{G}_{\leq d}$ have degree equal to $d$. Since $\mathcal{G}$ generates $M$ as an $A$-module, $\mathcal{G}_{\leq d}$ generates $M_d$ as a $\KK$-vector space. The elements of $\mathcal{G}$ all have different leading terms because $\mathcal{G}$ is reduced; in particular, the elements of $\mathcal{G}$ are $\KK$-linearly independent. We conclude that $\mathcal{G}_{\leq d}$ is a $\KK$-basis of $M_d$.

To get the thesis, observe that the restriction of $\varphi$ to $Z$ is injective by part \ref{min1} of proposition \ref{minimality} and the assumption that $Z\subseteq E_d$. Therefore the preimage of a basis of $M_d$ is a basis of $\varphi^{-1} (M_d) = Z$. In particular, $\varphi^{-1} (\mathcal{G}_{\leq d})$ is a basis of $Z$.
\end{proof}

\begin{coro}\label{nice_basis_wv}
Let $\varphi \colon E\rightarrow F$ be a minimal map of free modules in $\mod_{\righttoleftarrow T} A$. If hypotheses \ref{h1} and \ref{h2} of theorem \ref{main_thm} hold, then $E$ admits a homogeneous basis of weight vectors $\tilde{\mathcal{E}} = \{\tilde{e}_1,\ldots,\tilde{e}_r\}$ such that
\[\LT(\varphi(\tilde{e}_1)) < \ldots < \LT(\varphi(\tilde{e}_r)).\]
\end{coro}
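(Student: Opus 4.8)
The plan is to assemble the desired basis out of weight vectors grouped first by degree and then by weight, using Proposition~\ref{gb_basis} to separate leading terms \emph{within} a single weight space and Corollary~\ref{same_LT_same_weight} to separate them \emph{across} distinct weight spaces. Observe first that it suffices to produce a homogeneous basis of weight vectors $\{\tilde e_1,\ldots,\tilde e_r\}$ of $E$ whose images under $\varphi$ have pairwise distinct leading terms: a module term ordering is a total order on $\TT^n\langle\mathcal F\rangle$, so we may afterwards reindex the basis so that $\LT(\varphi(\tilde e_1))<\cdots<\LT(\varphi(\tilde e_r))$. Since $\varphi$ is minimal, part~\ref{min1} of Proposition~\ref{minimality} shows $\varphi$ is injective on the $\KK$-span of any homogeneous basis, so each basis vector has nonzero image and $\LT(\varphi(-))$ is defined.

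\emph{Reduction to a single degree.} Using Proposition~\ref{free_mod_structure} write $E\cong(E/\mathfrak mE)\otimes_\KK A$ in $\mod_{\righttoleftarrow T}A$; since the $T$-action is compatible with the grading, $E/\mathfrak mE=\bigoplus_{d\in\ZZ^m}(E/\mathfrak mE)_d$ as representations of $T$, hence $E=\bigoplus_d E^{(d)}$ with $E^{(d)}:=(E/\mathfrak mE)_d\otimes_\KK A$ a $T$-stable free submodule generated in the single degree $d$. The restriction $\varphi|_{E^{(d)}}$ is again minimal, because a subset of a minimal generating set of $\im\varphi$ is a minimal generating set of the submodule it generates (graded Nakayama, as in the proof of Proposition~\ref{minimality}). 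If $\tilde e\in E^{(d)}$ and $\tilde e'\in E^{(d')}$ with $d\neq d'$, then $\varphi(\tilde e)$ and $\varphi(\tilde e')$ are homogeneous of different degrees, hence so are their leading terms, which are therefore distinct. Thus it is enough to handle each $\varphi|_{E^{(d)}}$ separately, i.e. we may assume $E$ is generated in a single degree $d$.

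\emph{The single-degree case.} Now $E_d\cong E/\mathfrak mE$ is a finite-dimensional representation of $T$, with weight-space decomposition $E_d=\bigoplus_\chi(E_d)_\chi$, and a $\KK$-basis of $E_d$ is exactly a homogeneous basis of the free module $E$. Fix a weight $\chi$ with $(E_d)_\chi\neq 0$ and apply Proposition~\ref{gb_basis} with $Z=(E_d)_\chi$: if $\mathcal G^{(\chi)}$ denotes the reduced Gr\"obner basis of the $A$-submodule of $F$ generated by $\varphi(Z)$, then $\varphi^{-1}(\mathcal G^{(\chi)}_{\leq d})$ is a $\KK$-basis of $Z$, so it consists of weight vectors of weight $\chi$, and the images of these vectors are precisely the elements of $\mathcal G^{(\chi)}_{\leq d}$, which have pairwise distinct leading terms because $\mathcal G^{(\chi)}$ is reduced. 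Taking the union over all weights $\chi$ yields a homogeneous basis of weight vectors of $E$. Leading terms coming from the same weight are distinct by construction; leading terms coming from different weights $\chi\neq\chi'$ are distinct too, for if $\tilde e,\tilde e'$ were weight vectors of weights $\chi,\chi'$ with $\LT(\varphi(\tilde e))=\LT(\varphi(\tilde e'))$, then Corollary~\ref{same_LT_same_weight} would force $\chi=\chi'$. Hence all leading terms are distinct, and reindexing gives the claim.

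\emph{Main obstacle.} The genuinely delicate point is that Proposition~\ref{gb_basis} on its own only returns preimages of Gr\"obner basis elements, which need not be weight vectors; this is precisely why the proposition must be invoked separately on each weight space $(E_d)_\chi$, and why Corollary~\ref{same_LT_same_weight} — which uses hypotheses~\ref{h1} and~\ref{h2} — is needed to rule out leading-term collisions between different weight spaces. The reduction to a single generating degree is routine but does require verifying the $T$-stability and the minimality of the summands $E^{(d)}$.
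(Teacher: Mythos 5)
Your proof is correct and follows essentially the same route as the paper: reduce to a single generating degree, apply Proposition \ref{gb_basis} to each weight space of $E_d$ to get weight-vector bases with distinct leading terms, and use Corollary \ref{same_LT_same_weight} to rule out collisions across distinct weights, with different degrees handled by the degree of the leading term. The only difference is that you explicitly verify minimality of the restrictions $\varphi|_{E^{(d)}}$, a point the paper leaves implicit.
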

\begin{proof}
Let us begin by considering the special case where $E$ is generated in a single degree $d\in \ZZ^m$.
Since $E_d$ is a finite dimensional representation of $T$, we have a decomposition into weight spaces $E_d = (E_d)_{\chi_1} \oplus \ldots \oplus (E_d)_{\chi_h}$, for some characters $\chi_1,\ldots,\chi_h$ of $T$. By proposition \ref{gb_basis}, each weight space $(E_d)_{\chi_i}$ admits a basis $\tilde{\mathcal{E}}_i$ such that the images of elements in $\tilde{\mathcal{E}}_i$ under $\varphi$ have different leading terms.

Now consider $\tilde{\mathcal{E}} := \tilde{\mathcal{E}}_1\cup \ldots \cup \tilde{\mathcal{E}}_h$. The set $\tilde{\mathcal{E}}$ is a $\KK$-basis of weight vectors of $E_d$. Since $E$ is generated in degree $d$, $\tilde{\mathcal{E}}$ is also a homogeneous basis of weight vectors $E$. Moreover if two elements of $\tilde{\mathcal{E}}$ have different weights, their images under $\varphi$ have different leading terms by corollary \ref{same_LT_same_weight}. Therefore we can index elements of $\tilde{\mathcal{E}}$ so that $\tilde{\mathcal{E}} = \{\tilde{e}_1,\ldots,\tilde{e}_r\}$ and $\LT(\varphi(\tilde{e}_1)) < \ldots < \LT(\varphi(\tilde{e}_r))$.

For the general case, simply observe that if two homogeneous elements have different degrees, their leading terms must be different. Therefore the conclusion holds for any $E$.
\end{proof}

We are finally ready to prove the main result of this work.

\begin{proof}[Proof of theorem \ref{main_thm}]\mbox{}

\begin{mydescription}[wide,style=nextline]
\item[Step 1: reduction to a single degree.] Without loss of generality we may assume that $E$ is generated in a single degree $d\in \ZZ^m$. If that is not the case, we may write $E = E_1 \oplus \ldots \oplus E_l$, where each $E_i$ is a free module in $\mod_{\righttoleftarrow T} A$ which is generated in a single degree $d_i\in \ZZ^m$. Then, by properties of the direct sum, we can define maps $\varphi_i \colon E_i\rightarrow F$ that satisfy the hypotheses of the theorem, thus reducing to the case where the domain is generated in a single degree.
\item[Step 2: a special basis of $E$.] By corollary \ref{nice_basis_wv}, $E$ admits a homogeneous basis of weight vectors $\tilde{\mathcal{E}} = \{\tilde{e}_1,\ldots,\tilde{e}_r\}$ such that
\begin{equation}\label{main_thm_proof:eq2}
\LT(\varphi(\tilde{e}_1)) < \ldots < \LT(\varphi(\tilde{e}_r)).
\end{equation}
\item[Step 3: compare leading terms.] Since $\varphi$ is minimal and $\mathcal{E}$ is a homogeneous basis of $E$, $\varphi(e_1), \ldots, \varphi(e_r)$ are minimal generators of $\im\varphi$. Because $E$ is generated in degree $d$, this implies $\dim_\KK (\im\varphi)_d = r$. Now let $\LT (\im\varphi)$ be the leading term module of $\im\varphi$ . Recall that $\LT (\im\varphi)$ is the monomial submodule of $F$ generated by the set
\[\{\LT(m) \in F \mid m\in \im\varphi\}.\]
It is known that $\im\varphi$ and $\LT (\im\varphi)$ have the same Hilbert function \cite[Prop. 5.8.9.f]{MR2159476}; in particular, $\dim_\KK (\LT(\im\varphi))_d = \dim_\KK (\im\varphi)_d = r$. By the string of inequalities in \ref{h3}, the terms $\LT(\varphi(e_1)), \ldots, \LT(\varphi(e_r))$ are all different and thus $\KK$-linearly independent. We conclude that they form a basis of $(\LT(\im\varphi))_d$.

Since $\tilde{\mathcal{E}}$ is another homogeneous basis of $E$, the terms $\LT(\varphi(\tilde{e}_1)), \ldots, \LT(\varphi(\tilde{e}_r))$ form another basis of $(\LT(\im\varphi))_d$, by the same argument. By \cite[Prop. 1.3.11]{MR1790326}, the minimal monomial generators of a monomial module are uniquely determined; this gives an equality of sets:
\[\{\LT(\varphi(e_1)), \ldots, \LT(\varphi(e_r))\} = \{\LT(\varphi(\tilde{e}_1)), \ldots, \LT(\varphi(\tilde{e}_r))\}.\]
Finally, the strings of inequalities in \ref{h3} and (\ref{main_thm_proof:eq2}) imply that
\begin{equation}\label{main_thm_proof:eq3}
\LT (\varphi (e_i)) = \LT (\varphi (\tilde{e}_i)),\ \forall i\in \{1,\ldots,r\}.
\end{equation}
\item[Step 4: the triangular change of basis.] Combining the string of inequalities (\ref{main_thm_proof:eq2})  and the equalities (\ref{main_thm_proof:eq3}), we get
\[\LT(\varphi(\tilde{e}_1)) < \ldots < \LT(\varphi(\tilde{e}_j)) = \LT(\varphi({e}_j)),\]
for all $j\in\{1,\ldots,r\}$. Thus, for all $j\in\{1,\ldots,r\}$, $\exists u_{1,j},\ldots,u_{j,j}\in \KK$ such that
\begin{equation}\label{main_thm_proof:eq4}
\varphi({e}_j) = \sum_{i=1}^j u_{i,j} \varphi(\tilde{e}_i) = \varphi\left(\sum_{i=1}^j u_{i,j} \tilde{e}_i\right),
\end{equation}
and $u_{j,j}\neq 0$. The matrix $(u_{i,j})$ is then upper triangular and invertible. The map $\varphi$ being minimal, its restriction to $E_d$ is injective by part \ref{min1} of proposition \ref{minimality}. Therefore equation (\ref{main_thm_proof:eq4}) implies
\[{e}_j = \sum_{i=1}^j u_{i,j} \tilde{e}_i.\] 
In other words, $\mathcal{M}^{\mathcal{E}}_{\tilde{\mathcal{E}}} (\id_E) = (u_{i,j})$ is upper triangular, which proves \ref{t1} in the thesis.
\item[Step 5: calculate the weights of $E$.] For each $i\in\{1,\ldots,r\}$, $\LT (\varphi (e_i)) = \LT (\varphi (\tilde{e}_i)) = \hat{t} {f}_{\hat{\jmath}}$, for some $\hat{t} {f}_{\hat{\jmath}} \in \TT^n \langle \mathcal{F} \rangle$; then
\[\weight (\tilde{e}_i) = \weight (\hat{t}) + \weight (\tilde{f}_{\hat{\jmath}}),\]
by proposition \ref{weight_transfer}. This proves \ref{t2} in the thesis.
\end{mydescription}
\end{proof}

\section{Algorithms and applications}
In this section, we present some applications of theorem \ref{main_thm}. We also design algorithms, in pseudocode, that can be implemented in a software system to carry out the necessary computations.
The software system must be able to compute over polynomial rings with positive $\ZZ^m$-gradings and determine:
\begin{itemize}
\item reduced Gr\"obner bases (truncated at a given degree);
\item the change of basis between the reduced Gr\"obner basis of a  module and some user-specified set of generators;
\item a basis for the graded components of a module (via Macaulay's theorem \cite[Cor. 2.4.11]{MR1790326}).
\end{itemize}

\subsection{Weight propagation along a map}\label{sec_algo_map}
Our first algorithm can be used to recover the weights of the domain a map from those of the codomain. We lay out some assumptions.
\begin{enumerate}[label=(\alph*),wide]
\item $\varphi\colon E\rightarrow F$ is a minimal map of free modules in $\mod_{\righttoleftarrow T} A$.
\item The map $\varphi$ is provided in matrix form $\mathcal{M}^{\mathcal{E}}_{\mathcal{F}} (\varphi)$, where $\mathcal{E}$ is a homogeneous basis of $E$ and $\mathcal{F}$ is a homogeneous basis of $F$.
\item $\TT^n \langle \mathcal{F} \rangle$ is equipped with a position up (resp. down) module term ordering.
\item $F$ admits a homogeneous basis of weight vectors $\tilde{\mathcal{F}} = \{\tilde{f}_1,\ldots,\tilde{f}_s\}$ such that $\mathcal{M}^{\mathcal{F}}_{\tilde{\mathcal{F}}} (\id_F)$ is upper (resp. lower) triangular.
\item $W = \{w_1,\ldots,w_s\}$ is an ordered list with $w_i = \weight (\tilde{f}_i)$, $\forall i\in\{1,\ldots,s\}$.
\end{enumerate}

First we deal with the case where $E$ is generated in a single degree $d\in\ZZ^m$.

\begin{algo}[weight propagation along a map with domain in a single degree]\label{algo_single_degree}
\algrenewcommand\algorithmicrequire{\textbf{Input:}}
\algrenewcommand\algorithmicensure{\textbf{Output:}}
\begin{algorithmic}[1]
\Require{\hfill
\begin{itemize}
\item $\mathcal{M}^{\mathcal{E}}_{\mathcal{F}} (\varphi)$, a matrix as in assumption (b) above
\item $W$, a list of weights as in assumption (e) above
\end{itemize}
}
\Ensure{\hfill
  \begin{itemize}
  \item $C = \mathcal{M}^{\mathcal{E}'}_{\mathcal{E}} (\id_E)$,
    a change of basis in $E$ such that the leading terms of the
    columns of $\mathcal{M}^{\mathcal{E}'}_{\mathcal{F}} (\varphi)$
    are all different
  \item $V = \{v_1,\ldots,v_r\}$, a list of weights such that
    $v_i = \weight (\tilde{e}_i)$ for a homogeneous basis of weight
    vectors $\{\tilde{e}_1,\ldots,\tilde{e}_r\}$ of $E$
  \end{itemize}
~\hfill
}
\Function{PropagateSingleDegree}{$\mathcal{M}^{\mathcal{E}}_{\mathcal{F}} (\varphi),W$}
   \State set $d:=$ the degree of the first column of $\mathcal{M}^{\mathcal{E}}_{\mathcal{F}} (\varphi)$
   \State compute $\mathcal{G}_{\leq d}$, the $d$-truncated reduced Gr\"obner basis of $\im\varphi$, using $\mathcal{M}^{\mathcal{E}}_{\mathcal{F}} (\varphi)$
   \If{the module term ordering on $\TT^n \langle \mathcal{F}\rangle$ is position up}
   	\State form $G$, a matrix whose columns are the component vectors (in the homogeneous basis $\mathcal{F}$) of the elements of $\mathcal{G}_{\leq d}$ arranged in increasing order of their leading terms
   \ElsIf{the module term ordering on $\TT^n \langle \mathcal{F}\rangle$ is position down}
   	\State form $G$, a matrix whose columns are the component vectors (in the homogeneous basis $\mathcal{F}$) of the elements of $\mathcal{G}_{\leq d}$ arranged in decreasing order of their leading terms
   \EndIf
   \State compute $C$, the matrix of the change of basis in $E$ such that $G = \mathcal{M}^{\mathcal{E}}_{\mathcal{F}} (\varphi) C$
   \State set $r:=$ number of columns of $\mathcal{M}^{\mathcal{E}}_{\mathcal{F}} (\varphi)$
   \For{$i\in\{1,\ldots,r\}$}
   	\State set $f:=$ the $i$-th column of $G$
	\State find $t\in\TT^n$ and $\hat{\jmath} \in \{1,\ldots,s\}$, such that $\LT (f) = \hat{t} f_{\hat{\jmath}}$
	\State set $v_i := \weight(\hat{t}) + w_{\hat{\jmath}}$ 
   \EndFor
   \State form the ordered list $V := \{v_1,\ldots,v_r\}$
   \State \textbf{return} $(C,V)$
\EndFunction
\end{algorithmic}
\end{algo}

\begin{ex}
  \label{exa:1}
  Let $A=\CC [x_1,x_2]$ equipped with the standard grading and the degree reverse lexicographic term ordering with $x_1>x_2$.
We identify $A$ with $\Sym (\CC^2)$ and consider the action of the maximal torus of diagonal matrices $T\subseteq\GL_2 (\CC)$ on $A$.
The variables $x_1,x_2\in A$ are weight vectors with weights $(1,0)$
and $(0,1)$ respectively (cf. example \ref{ex1}).

  Let $F= A$ with homogeneous basis $\mathcal{F} = \{f_1\}$, where $f_1 =1_A$, and assume $F$ is equipped with the term over position 
up module term ordering (so \ref{h1} of theorem \ref{main_thm} is
met).
Notice $\tilde{f}_1=f_1=1_A$ is a weight vector with weight $(0,0)$, and $\tilde{\mathcal{F}} = \{\tilde{f}_1\} = \mathcal{F}$ is a homogeneous basis of weight vectors of $F$. Clearly $\mathcal{M}^{\mathcal{F}}_{\tilde{\mathcal{F}}} (\id_F) = (1)$ is upper triangular (which satisfies \ref{h2} of theorem \ref{main_thm}).
Let $E=A(-1)^2$ with homogeneous basis $\mathcal{E}=\{e_1,e_2\}$, where $e_1 = (1_A,0)$ and $e_2 = (0,1_A)$ (the coordinate basis). We introduce a map $\varphi \colon E\to F$ in $\mod_{\righttoleftarrow T} A$ via its matrix form
  \[\mathcal{M}^{\mathcal{E}}_{\mathcal{F}} (\varphi) = 
  \begin{pmatrix}
    x_1 & x_2
  \end{pmatrix};
  \]
note $\varphi$ is a minimal presentation for the coordinate ring of the origin in the affine space $\mathbb{A}^2$, therefore it is minimal.

We apply algorithm \ref{algo_single_degree} to the pair $(\mathcal{M}^{\mathcal{E}}_{\mathcal{F}} (\varphi),W)$, where $W=\{(0,0)\}$. All columns of $\mathcal{M}^{\mathcal{E}}_{\mathcal{F}} (\varphi)$ have degree $d=1$. We compute the reduced Gr\"obner basis of $\im\varphi$ truncated in degree 1 and, from its elements, form the matrix
  \[G=
  \begin{pmatrix}
    x_2 & x_1
  \end{pmatrix},
  \]
as indicated in lines 4 through 8 of the algorithm.
Note how the leading terms of the elements in the columns of $G$ are all
different and sorted in increasing order from left to right
(this ensures that hypothesis \ref{h3} in theorem \ref{main_thm}
is satisfied).
The matrix
  \[C=
  \begin{pmatrix}
    0 & 1\\
    1 & 0
  \end{pmatrix}\]
gives the change of basis $G = \mathcal{M}^{\mathcal{E}}_{\mathcal{F}} (\varphi) C$.

Note $G$ has $r=2$ columns. For the first column, $f = (x_2)$ which has leading term $x_2 f_1$. Hence we set
  \[v_1 = \weight (x_2) + w_1 = (0,1) + (0,0) = (0,1).\]
Similarly, for the second column, $f = (x_1)$ which has leading term $x_1 f_1$. Hence we set
  \[v_2 = \weight (x_1) + w_1 = (1,0) + (0,0) = (1,0).\]
Then the algorithm returns the pair $(C,V)$ with $C$ as above and $V=\{(0,1),(1,0)\}$.
Notice how the weight computations carried out use the formula
from \ref{t2} in theorem \ref{main_thm}.
By \ref{t1} we conclude that $E$ admits a homogeneous basis of weight vectors $\tilde{\mathcal{E}}=\{\tilde{e}_1,\tilde{e}_2\}$, with $\weight (\tilde{e}_1) = (0,1)$ and $\weight (\tilde{e}_2) = (1,0)$. Therefore $E\cong \CC^2 \otimes_\CC A(-1)$.
\end{ex}

\begin{ex}
  \label{exa:2}
  Let $A=\CC[y_1,y_2]$. Despite the different names for the variables, we assume the same setup as in the previous example. Let $F=A$, $E=A(-2)^3$, and $\mathcal{F}$, $\mathcal{E}$ their respective coordinate bases.
Assume $F$ has the term over position up module term ordering.
The map $\varphi \colon E\to F$ in $\mod_{\righttoleftarrow T} A$ given by the matrix
  \[\mathcal{M}^{\mathcal{E}}_{\mathcal{F}} (\varphi) = 
  \begin{pmatrix}
    y_1^2 & y_1 y_2 & y_2^2
  \end{pmatrix}\]
is a minimal presentation for a quotient of $A$ which gives a non reduced scheme structure for the origin of $\mathbb{A}^2$; in particular, $\varphi$ is a minimal map.

  We apply algorithm \ref{algo_single_degree} to the pair $(\mathcal{M}^{\mathcal{E}}_{\mathcal{F}} (\varphi),W)$, where $W=\{(0,0)\}$. The columns of $\mathcal{M}^{\mathcal{E}}_{\mathcal{F}} (\varphi)$ have degree 2 and a Gr\"obner basis computation yields
  \[G=
  \begin{pmatrix}
    y_2^2 & y_1 y_2 & y_1^2
  \end{pmatrix}.\]
The matrix of the change of basis calculated by the algorithm is
  \[C=
  \begin{pmatrix}
    0 & 0 & 1\\
    0 & 1 & 0\\
    1 & 0 & 0
  \end{pmatrix}.\]
Lines 13 through 17 result in the following assignments:
\begin{align*}
  &v_1 = \weight (y_2^2) + w_1 = (0,2) + (0,0) = (0,2),\\
  &v_2 = \weight (y_1 y_2) + w_1 = (1,1) + (0,0) = (1,1),\\
  &v_3 = \weight (y_1^2) + w_1 = (2,0) + (0,0) = (2,0).
\end{align*}
Therefore the algorithm returns $(C,V)$ with $V=\{(0,2),(1,1),(2,0)\}$. We deduce $E\cong \Sym^2 (\CC^2) \otimes_\CC A(-2)$.
\end{ex}

\begin{prop}\label{algo_single_degree_prop}
Under the assumptions of this section, applying algorithm \ref{algo_single_degree} yields a pair $(C, V=\{v_1,\ldots,v_r\})$ that satisfies the following properties:
\begin{enumerate}[label=\Roman*.,ref=\Roman*,wide]
\item\label{a11} $E$ admits a homogeneous basis $\mathcal{E}' = \{e'_1,\ldots,e'_r\}$ such that $C = \mathcal{M}^{\mathcal{E}'}_{\mathcal{E}} (\id_E)$ and the terms $\LT (\varphi (e'_1)),\ldots,\LT (\varphi (e'_r))$ are all different;
\item\label{a12} $E$ admits a homogeneous basis of weight vectors $\tilde{\mathcal{E}} = \{\tilde{e}_1,\ldots,\tilde{e}_r\}$ such that $\mathcal{M}^{\mathcal{E}'}_{\tilde{\mathcal{E}}} (\id_E)$ is upper (resp. lower) triangular and $\weight (\tilde{e}_i) = v_i$, for all $i \in \{1,\ldots,r\}$.
\end{enumerate}
\end{prop}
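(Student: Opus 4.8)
The plan is to observe that Algorithm~\ref{algo_single_degree} is nothing more than a concrete, matrix-level implementation of the constructions in Proposition~\ref{gb_basis} and Theorem~\ref{main_thm}, so that the proof reduces to checking that the output of the algorithm fulfils the hypotheses of the latter.

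First I would examine the matrix $G$ built in the first part of the algorithm. Because $E$ is generated in the single degree $d$ (the degree of the first, hence of every, column of $\mathcal{M}^{\mathcal{E}}_{\mathcal{F}}(\varphi)$), we have $E = A\cdot E_d$ and therefore $\im\varphi = A\cdot\varphi(E_d)$; that is, $\im\varphi$ is exactly the module $M$ of Proposition~\ref{gb_basis} for the choice $Z = E_d$. That proposition then says $\varphi^{-1}(\mathcal{G}_{\leq d})$ is a $\KK$-basis of $E_d$, which, $E$ being generated in degree $d$, is a homogeneous basis of $E$ (as in the proof of Corollary~\ref{nice_basis_wv}). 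I index its elements as $\mathcal{E}' = \{e'_1,\ldots,e'_r\}$ so that $\varphi(e'_i)$ is the $i$-th column of $G$; this is unambiguous because $\varphi$ restricted to $E_d$ is injective by part~\ref{min1} of Proposition~\ref{minimality}. From $G = \mathcal{M}^{\mathcal{E}}_{\mathcal{F}}(\varphi)\,C$ and the identity $\mathcal{M}^{\mathcal{E}'}_{\mathcal{F}}(\varphi) = \mathcal{M}^{\mathcal{E}}_{\mathcal{F}}(\varphi)\,\mathcal{M}^{\mathcal{E}'}_{\mathcal{E}}(\id_E)$, together with that injectivity, one reads off $C = \mathcal{M}^{\mathcal{E}'}_{\mathcal{E}}(\id_E)$. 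The leading terms $\LT(\varphi(e'_1)),\ldots,\LT(\varphi(e'_r))$ are the leading terms of the elements of the reduced Gr\"obner basis $\mathcal{G}$, hence pairwise distinct, which gives part~\ref{a11}; and by the sorting carried out when $G$ is assembled, they satisfy $\LT(\varphi(e'_1)) < \ldots < \LT(\varphi(e'_r))$ in the position up case (resp.\ the reverse inequalities in the position down case). Thus $\mathcal{E}'$ verifies hypothesis~\ref{h3} (resp.\ its position down analogue).

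With this in hand, I would apply Theorem~\ref{main_thm} (resp.\ its position down version from the remark after Theorem~\ref{main_thm}), taking $\mathcal{E}'$ for the basis called $\mathcal{E}$ there: assumptions (c) and (d) of this section are precisely \ref{h1} and \ref{h2}, and \ref{h3} has just been checked. Conclusion~\ref{t1} yields a homogeneous basis of weight vectors $\tilde{\mathcal{E}} = \{\tilde{e}_1,\ldots,\tilde{e}_r\}$ of $E$ with $\mathcal{M}^{\mathcal{E}'}_{\tilde{\mathcal{E}}}(\id_E)$ upper (resp.\ lower) triangular, and conclusion~\ref{t2} gives $\weight(\tilde{e}_i) = \weight(\hat{t}) + \weight(\tilde{f}_{\hat{\jmath}})$ whenever $\LT(\varphi(e'_i)) = \hat{t}f_{\hat{\jmath}}$. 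Since $\LT(\varphi(e'_i))$ is the leading term of the $i$-th column of $G$ and $w_{\hat{\jmath}} = \weight(\tilde{f}_{\hat{\jmath}})$ by assumption (e), the right-hand side is exactly the weight $v_i$ computed in the loop over $i\in\{1,\ldots,r\}$, which establishes part~\ref{a12}.

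The argument is largely bookkeeping; the only points that require genuine care are the identification $C = \mathcal{M}^{\mathcal{E}'}_{\mathcal{E}}(\id_E)$ and the verification that the Gr\"obner-basis step really produces a basis $\mathcal{E}'$ fulfilling hypothesis~\ref{h3} — both of which rest on the injectivity of $\varphi|_{E_d}$ (from minimality) and on Proposition~\ref{gb_basis}. Once these are secured, both parts of the statement drop out of Theorem~\ref{main_thm}.
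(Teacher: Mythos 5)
Your proof is correct and takes essentially the same route as the paper's: identify the homogeneous basis $\mathcal{E}'$ whose images under $\varphi$ are the columns of $G$, observe that the leading terms are pairwise distinct (reduced Gr\"obner basis) and sorted so that hypothesis \ref{h3} holds, and then invoke theorem \ref{main_thm} to obtain both conclusions. The only difference is that you make explicit, via proposition \ref{gb_basis} and the injectivity of $\varphi$ on $E_d$ from proposition \ref{minimality}, why the Gr\"obner basis elements pull back to a basis of $E$ and hence why $C$ is genuinely a change-of-basis matrix equal to $\mathcal{M}^{\mathcal{E}'}_{\mathcal{E}}(\id_E)$, a point the paper asserts as immediate from the construction on line 9 of the algorithm.
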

\begin{proof}
By its construction on line 9 of algorithm \ref{algo_single_degree},
$C$ is the matrix of a change of basis in $E$.
Let $\mathcal{E}'$ be the homogeneous basis of $E$ defined by $C$, so that the equality $C = \mathcal{M}^{\mathcal{E}'}_{\mathcal{E}} (\id_E)$ in \ref{a11} follows by construction.
Since $C = \mathcal{M}^{\mathcal{E}'}_{\mathcal{E}} (\id_E)$, the equality on line 9 implies $G = \mathcal{M}^{\mathcal{E}'}_{\mathcal{F}} (\varphi)$. This implies that the $i$-th column of $G$ is the component vector of $\varphi (e'_i)$ in the basis $\mathcal{F}$. Recall that $G$ is the matrix of a reduced Gr\"obner basis, therefore the leading terms of its columns are all different, which concludes the proof of \ref{a11}.

Now apply theorem \ref{main_thm}, with $\mathcal{E}'$ playing the role of the basis $\mathcal{E}$ in hypothesis \ref{h3} of the theorem. Then, by \ref{t1}, $E$ admits a homogeneous basis of weight vectors $\tilde{\mathcal{E}} = \{\tilde{e}_1,\ldots,\tilde{e}_r\}$ such that $\mathcal{M}^{\mathcal{E}'}_{\tilde{\mathcal{E}}} (\id_E)$ is upper (resp. lower) triangular.
If $\LT (\varphi (\tilde{e}_i)) = \hat{t} f_{\hat{\jmath}}$, then
\[\weight (\tilde{e}_i) = \weight (\hat{t}) + \weight (\tilde{f}_{\hat{\jmath}}) = \weight (\hat{t}) + w_{\hat{\jmath}} = v_i.\]
The first equality follows from \ref{t2} in theorem \ref{main_thm}; the second equality comes from the definition of $W$ and the last one descends from the construction of $v_i$ on line 14. This proves \ref{a12}.
\end{proof}

Let $(M_1 | \ldots | M_l)$ denote the block matrix obtained by juxtaposing matrices $M_1,\ldots,M_l$ all with the same number of rows. Also, given matrices $C_1,\ldots,C_l$, denote $C_1 \oplus \ldots \oplus C_l$ the block diagonal matrix
\[
\begin{pmatrix}
C_1 & 0 & \ldots & 0\\
0 & \ddots & \ddots & \vdots \\
\vdots & \ddots & \ddots & 0\\
0 & \ldots & 0 & C_l
\end{pmatrix}.
\]
We will employ these notations to generalize algorithm \ref{algo_single_degree} to the case where the domain $E$ of $\varphi$ is generated in multiple degrees.

\begin{algo}[weight propagation along a map]\label{algo_multiple_degrees}
\algrenewcommand\algorithmicrequire{\textbf{Input:}}
\algrenewcommand\algorithmicensure{\textbf{Output:}}
\begin{algorithmic}[1]
\Require{\hfill
\begin{itemize}
\item $\mathcal{M}^{\mathcal{E}}_{\mathcal{F}} (\varphi)$, a matrix as in assumption  (b) above
\item $W$, a list of weights as in assumption (e) above
\end{itemize}
}
\Ensure{\hfill
  \begin{itemize}
  \item $C = \mathcal{M}^{\mathcal{E}'}_{\mathcal{E}} (\id_E)$,
    a change of basis in $E$ such that the leading terms of the
    columns of $\mathcal{M}^{\mathcal{E}'}_{\mathcal{F}} (\varphi)$
    are all different
  \item $V = \{v_1,\ldots,v_r\}$, a list of weights such that
    $v_i = \weight (\tilde{e}_i)$ for a homogeneous basis of weight
    vectors $\{\tilde{e}_1,\ldots,\tilde{e}_r\}$ of $E$
  \end{itemize}
~\hfill
}
\Function{Propagate}{$\mathcal{M}^{\mathcal{E}}_{\mathcal{F}} (\varphi),W$}
	\State form $P$, a permutation matrix such that $\mathcal{M}^{\mathcal{E}}_{\mathcal{F}} (\varphi) P = (M_1 | \ldots | M_l)$, where, $\forall i\in\{1,\ldots,l\}$, the columns of $M_i$ have the same degree $d_i\in\ZZ^m$, and the degrees $d_1,\ldots,d_l$ are all different
	\For{$i\in\{1,\ldots,l\}$}
		\State set $(C_i,V_i) := \text{\textsc{PropagateSingleDegree}} (M_i, W)$
	\EndFor
	\State set $C := P (C_1\oplus\ldots\oplus C_l)$
	\State set $V := V_1\cup \ldots\cup V_l$
	\State \textbf{return} $(C,V)$
\EndFunction
\end{algorithmic}
\end{algo}

The symbol $\cup$ on line 7 denotes ``ordered union'' of ordered lists; in other words the list $V_2$ is appended to $V_1$, then $V_3$ is appended to $V_1\cup V_2$, etc.

It is enough to trace through the algorithm to realize that the conclusion of proposition \ref{algo_single_degree_prop} holds for algorithm \ref{algo_multiple_degrees} as well.

\begin{ex}
  \label{exa:3}
  Let $A=\CC [x_1,x_2,y_1,y_2]$; we introduce a positive $\ZZ^2$-grading on $A$ by setting the degree of $x_i$ equal to $(1,0)$ and the degree of $y_i$ equal to $(0,1)$, for $i\in\{1,2\}$. We equip $A$ with the degree reverse lexicographic term ordering with $x_1 > x_2 > y_1 > y_2$ and all free $A$-modules have the term over position up module term ordering. We identify $A$ with $\Sym (\CC^2 \oplus \CC^2)$. The group $\GL_2 (\CC) \times \GL_2 (\CC)$ acts naturally on $A$; the maximal torus $T$ of diagonal matrices in each copy of $\GL_2 (\CC)$ acts on $A$, making the variables $x_1,x_2,y_1,y_2$ into weight vectors of weights $(1,0,0,0),(0,1,0,0),(0,0,1,0),(0,0,0,1)$.

  Let $F=A$ and $E=A(-1,0)^2 \oplus A(0,-2)^3$, with $\mathcal{F},\mathcal{E}$ their respective coordinate bases. Since $F$ has rank 1, the unique element $f_1=1_A\in \mathcal{F}$ is a weight vector with weight $(0,0,0,0)$; hence we can take a homogeneous basis of weight vectors $\tilde{\mathcal{F}} = \mathcal{F}$ and $\mathcal{M}^{\mathcal{F}}_{\tilde{\mathcal{F}}} (\id_F)$ is upper triangular. The map $\varphi \colon E\to F$ represented by the matrix
  \[\mathcal{M}^{\mathcal{E}}_{\mathcal{F}} (\varphi) = 
  \begin{pmatrix}
    x_1 & x_2 & y_1^2 & y_1 y_2 & y_2^2
  \end{pmatrix}
  \]
is in $\mod_{\righttoleftarrow T} A$. It is a minimal presentation of a quotient of $A$ which gives a non reduced scheme structure for the origin of $\mathbb{A}^4$; in particular, it is minimal.

  We apply algorithm \ref{algo_multiple_degrees} to the pair $(\mathcal{M}^{\mathcal{E}}_{\mathcal{F}} (\varphi),W)$, where $W=\{(0,0,0,0)\}$. Line 2 of the algorithm sorts all columns of $\mathcal{M}^{\mathcal{E}}_{\mathcal{F}} (\varphi)$ of the same degree into a contiguous block.
In our case, $\mathcal{M}^{\mathcal{E}}_{\mathcal{F}} (\varphi) = (M_1 | M_2)$ with
\[M_1 =
\begin{pmatrix}
  x_1 & x_2
\end{pmatrix},
\qquad
M_2 = 
\begin{pmatrix}
  y_1^2 & y_1 y_2 & y_2^2
\end{pmatrix},\]
and the columns of $M_1$ have degree $d_1 = (1,0)$ whereas the columns of $M_2$ have degree $d_2 = (0,2)$.
Since the columns are already sorted, $P$ is simply the identity matrix. Next the algorithm calls
algorithm \ref{algo_single_degree} on the pair $(M_1,W)$, which returns
\[C_1=
\begin{pmatrix}
  0 & 1\\
  1 & 0
\end{pmatrix},
\qquad
V_1 = \{(0,1,0,0),(1,0,0,0)\},\]
and on the pair $(M_2,W)$, which returns
\[C_2=
\begin{pmatrix}
  0 & 0 & 1\\
  0 & 1 & 0\\
  1 & 0 & 0
\end{pmatrix},
\qquad
V_2 = \{(0,0,0,2),(0,0,1,1),(0,0,2,0)\}.\]
The computations for the pairs $(M_i,W)$ are essentially the same
as those carried out in examples \ref{exa:1} and \ref{exa:2},
the only difference being that weights are in $\ZZ^4$.

The outputs are then combined as indicated on lines 6 and 7:
\begin{gather*}
  C= C_1 \oplus C_2 =
  \begin{pmatrix}
    0 & 1 & 0 & 0 & 0\\
    1 & 0 & 0 & 0 & 0\\
    0 & 0 & 0 & 0 & 1\\
    0 & 0 & 0 & 1 & 0\\
    0 & 0 & 1 & 0 & 0
  \end{pmatrix}\\
  V = V_1 \cup V_2 = \{(0,1,0,0),(1,0,0,0),
  (0,0,0,2),(0,0,1,1),(0,0,2,0)\}.
\end{gather*}
Finally the algorithm returns the pair $(C,V)$.
By virtue of proposition, \ref{algo_single_degree_prop} we deduce:
\begin{enumerate}[label=\Roman*.,ref=\Roman*,wide]
\item $E$ admits a homogeneous basis $\mathcal{E}' = \{e'_1,\ldots,e'_5\}$ such that $C = \mathcal{M}^{\mathcal{E}'}_{\mathcal{E}} (\id_E)$ and the leading terms of the columns of $\mathcal{M}^{\mathcal{E}}_{\mathcal{F}} (\varphi) C$ are all different;
\item $E$ admits a homogeneous basis of weight vectors $\tilde{\mathcal{E}} = \{\tilde{e}_1,\ldots,\tilde{e}_5\}$ such that $\mathcal{M}^{\mathcal{E}'}_{\tilde{\mathcal{E}}} (\id_E)$ is upper triangular and $\weight (\tilde{e}_i) = v_i$, for all $i \in \{1,\ldots,5\}$.
\end{enumerate}
From the second item, we recover
\begin{align*}
  E\cong &\left(\CC^2 \otimes \CC\right) \otimes_\CC A(-1,0) \oplus\\
  \oplus &\left(\CC \otimes \Sym^2 \CC^2\right) \otimes_\CC A(0,-2)
\end{align*}
in terms of $\GL_2 (\CC) \times \GL_2 (\CC)$-representations.
\end{ex}

The following examples showcase more interesting applications
of algorithm \ref{algo_multiple_degrees} and illustrate how we can
propagate weights successfully along syzygies. The basic ideas
that appear in these examples  will be formalized and organized
in an algorithm in section \ref{sec_algo_res}.

\begin{ex}
  \label{exa:4}
  Consider the map $\varphi$ from example \ref{exa:3}.
  The minimal syzygies of this map, as calculated by our computer algebra system,
  are given by the matrix
  \[\bgroup\begin{pmatrix}{-{x}_{2}}&
{-{y}_{1}^{2}}&
0&
{-{y}_{1} {y}_{2}}&
0&
0&
{-{y}_{2}^{2}}&
0&
0\\
{x}_{1}&
0&
{-{y}_{1}^{2}}&
0&
{-{y}_{1} {y}_{2}}&
0&
0&
{-{y}_{2}^{2}}&
0\\
0&
{x}_{1}&
{x}_{2}&
0&
0&
{-{y}_{2}}&
0&
0&
0\\
0&
0&
0&
{x}_{1}&
{x}_{2}&
{y}_{1}&
0&
0&
{-{y}_{2}}\\
0&
0&
0&
0&
0&
0&
{x}_{1}&
{x}_{2}&
{y}_{1}\\
\end{pmatrix}\egroup;\]
the columns have degrees, from left to right,
$(2,0)$, $(1,2)$, $(1,2)$, $(1,2)$, $(1,2)$, $(0,3)$, $(1,2)$, $(1,2)$, $(0,3)$.
Note the map represented by this matrix is a minimal map in
$\mod_{\righttoleftarrow T} A$ by proposition \ref{equivariant_resolution}.

The codomain of the map represented by this matrix is the same $E$ we
had in example \ref{exa:3}. Multiplying on the left by the inverse
of the matrix $C$ from the end of example \ref{exa:3}, we get
\begin{equation}
\label{example_matrix}
\bgroup\begin{pmatrix}{x}_{1}&
0&
{-{y}_{1}^{2}}&
0&
{-{y}_{1} {y}_{2}}&
0&
0&
{-{y}_{2}^{2}}&
0\\
{-{x}_{2}}&
{-{y}_{1}^{2}}&
0&
{-{y}_{1} {y}_{2}}&
0&
0&
{-{y}_{2}^{2}}&
0&
0\\
0&
0&
0&
0&
0&
0&
{x}_{1}&
{x}_{2}&
{y}_{1}\\
0&
0&
0&
{x}_{1}&
{x}_{2}&
{y}_{1}&
0&
0&
{-{y}_{2}}\\
0&
{x}_{1}&
{x}_{2}&
0&
0&
{-{y}_{2}}&
0&
0&
0\\
\end{pmatrix}\egroup
\end{equation}
The matrix in equation (\ref{example_matrix}) is written with respect to
a homogeneous basis of the codomain that admits an upper triangular change of 
basis to a homogeneous basis of weight vectors.
To be more precise, let $E,F$ be free $A$-modules, $\mathcal{E},\mathcal{F}$ their respective
coordinate bases, and $\varphi \colon E\to F$ a map such that
$\mathcal{M}^{\mathcal{E}}_{\mathcal{F}} (\varphi)$ is the matrix in 
equation (\ref{example_matrix}).
Then, by example \ref{exa:3}, $F$ admits a basis of weight vectors
$\tilde{\mathcal{F}} = \{\tilde{f}_1,\ldots, \tilde{f}_5\}$ such
that $\mathcal{M}^{\mathcal{F}}_{\tilde{\mathcal{F}}} (\id_F)$ is
upper triangular. Moreover, if $W$ denotes the list of weights of
$\tilde{f}_1,\ldots, \tilde{f}_5$, then we know
\[W = \{(0,1,0,0),(1,0,0,0),
  (0,0,0,2),(0,0,1,1),(0,0,2,0)\}.\]
Our goal is to apply algorithm \ref{algo_multiple_degrees} to the
pair $(\mathcal{M}^{\mathcal{E}}_{\mathcal{F}} (\varphi),W)$.

The permutation matrix
\[P = 
\bgroup\begin{pmatrix}1&
0&
0&
0&
0&
0&
0&
0&
0\\
0&
1&
0&
0&
0&
0&
0&
0&
0\\
0&
0&
1&
0&
0&
0&
0&
0&
0\\
0&
0&
0&
1&
0&
0&
0&
0&
0\\
0&
0&
0&
0&
1&
0&
0&
0&
0\\
0&
0&
0&
0&
0&
0&
0&
1&
0\\
0&
0&
0&
0&
0&
1&
0&
0&
0\\
0&
0&
0&
0&
0&
0&
1&
0&
0\\
0&
0&
0&
0&
0&
0&
0&
0&
1\\
\end{pmatrix}\egroup\]
has the property that $\mathcal{M}^{\mathcal{E}}_{\mathcal{F}} (\varphi) P =
(M_1 | M_2 | M_3)$, where $M_1$, $M_2$ and $M_3$ are the matrices
\[
\begin{pmatrix}{x}_{1}\\
{-{x}_{2}}\\
0\\
0\\
0\\
\end{pmatrix},
\quad
\begin{pmatrix}0&
{-{y}_{1}^{2}}&
0&
{-{y}_{1} {y}_{2}}&
0&
{-{y}_{2}^{2}}\\
{-{y}_{1}^{2}}&
0&
{-{y}_{1} {y}_{2}}&
0&
{-{y}_{2}^{2}}&
0\\
0&
0&
0&
0&
{x}_{1}&
{x}_{2}\\
0&
0&
{x}_{1}&
{x}_{2}&
0&
0\\
{x}_{1}&
{x}_{2}&
0&
0&
0&
0\\
\end{pmatrix},
\quad
\begin{pmatrix}0&
0\\
0&
0\\
0&
{y}_{1}\\
{y}_{1}&
{-{y}_{2}}\\
{-{y}_{2}}&
0\\
\end{pmatrix}.
\]
Observe the columns of $M_i$ have degree $d_i$ and
$d_1 = (2,0)$, $d_2 = (1,2)$, $d_3 = (0,3)$.

Algorithm \ref{algo_single_degree} applied to the pair $(M_1,W)$ finds the following
Gr\"obner basis and change of basis matrices:
\[G_1=
\begin{pmatrix}{x}_{1}\\
{-{x}_{2}}\\
0\\
0\\
0\\
\end{pmatrix},
\qquad
C_1=
\begin{pmatrix}1\\
\end{pmatrix}.
\]
The leading term of the only column of $G_1$ is $x_1f_1$ and
\[\weight (x_1) + w_1 = (1,0,0,0) + (0,1,0,0) = (1,1,0,0).\]
Hence this run returns  the pair $(C_1, V_1)$, where $V_1 =
\{(1,1,0,0)\}$.

Now consider the pair $(M_2,W)$. Our computation yields the
following Gr\"obner basis and change of basis matrices:
\begin{gather*}
G_2=
\begin{pmatrix}{y}_{2}^{2}&
0&
{y}_{1} {y}_{2}&
0&
{y}_{1}^{2}&
0\\
0&
{y}_{2}^{2}&
0&
{y}_{1} {y}_{2}&
0&
{y}_{1}^{2}\\
{-{x}_{2}}&
{-{x}_{1}}&
0&
0&
0&
0\\
0&
0&
{-{x}_{2}}&
{-{x}_{1}}&
0&
0\\
0&
0&
0&
0&
{-{x}_{2}}&
{-{x}_{1}}\\
\end{pmatrix},\\
C_2=
\begin{pmatrix}0&
0&
0&
0&
0&
{-1}\\
0&
0&
0&
0&
{-1}&
0\\
0&
0&
0&
{-1}&
0&
0\\
0&
0&
{-1}&
0&
0&
0\\
0&
{-1}&
0&
0&
0&
0\\
{-1}&
0&
0&
0&
0&
0\\
\end{pmatrix}.
\end{gather*}
We list the leading terms of the columns of $G_2$ and the corresponding
weight calculations performed by algorithm \ref{algo_single_degree}:
\begin{align*}
  &y_2^2 f_1, & &\weight(y_2^2) + w_1 = (0,0,0,2) + (0,1,0,0) = (0,1,0,2),\\
  &y_2^2 f_2, & &\weight(y_2^2) + w_2 = (0,0,0,2) + (1,0,0,0) = (1,0,0,2),\\
  &y_1 y_2 f_1,& & \weight(y_1 y_2) + w_1 = (0,0,1,1) + (0,1,0,0) = (0,1,1,1),\\
  &y_1 y_2 f_2,& & \weight(y_1 y_2) + w_2 = (0,0,1,1) + (1,0,0,0) = (1,0,1,1),\\
  &y_1^2 f_1,& & \weight(y_1^2) + w_1 = (0,0,2,0) + (0,1,0,0) = (0,1,2,0),\\
  &y_1^2 f_2,& & \weight(y_1^2) + w_2 = (0,0,2,0) + (1,0,0,0) = (1,0,2,0).
\end{align*}
The pair thus produced is $(C_2,V_2)$, where $V_2$ is the ordered list
containing the weights above.

Finally consider the pair $(M_3,W)$. Starting from $M_3$ we obtain
the following Gr\"obner basis and change of basis matrices:
\[
G_3 = 
\begin{pmatrix}0&
0\\
0&
0\\
{y}_{1}&
0\\
{-{y}_{2}}&
{y}_{1}\\
0&
{-{y}_{2}}\\
\end{pmatrix},
\qquad
C_3 =
\begin{pmatrix}0&
1\\
1&
0\\
\end{pmatrix}.
\]
The leading terms of the columns of $G_3$ and the corresponding
weight computations are given by:
\begin{align*}
  &y_1 f_3,& &\weight (y_1) + w_3 = (0,0,1,0) + (0,0,0,2) = (0,0,1,2),\\
  &y_1 f_4,& &\weight (y_1) + w_4 = (0,0,1,0) + (0,0,1,1) = (0,0,2,1).
\end{align*}
This run of algorithm \ref{algo_single_degree} returns the pair
$(C_3,V_3)$, where $V_3 = \{(0,0,1,2),(0,0,2,1)\}$.

The final steps of algorithm \ref{algo_multiple_degrees} combine
the previous outputs to give the change of basis matrix:
\[C = P(C_1 \oplus C_2 \oplus C_3) =
\begin{pmatrix}1&
0&
0&
0&
0&
0&
0&
0&
0\\
0&
0&
0&
0&
0&
0&
{-1}&
0&
0\\
0&
0&
0&
0&
0&
{-1}&
0&
0&
0\\
0&
0&
0&
0&
{-1}&
0&
0&
0&
0\\
0&
0&
0&
{-1}&
0&
0&
0&
0&
0\\
0&
0&
0&
0&
0&
0&
0&
0&
1\\
0&
0&
{-1}&
0&
0&
0&
0&
0&
0\\
0&
{-1}&
0&
0&
0&
0&
0&
0&
0\\
0&
0&
0&
0&
0&
0&
0&
1&
0\\
\end{pmatrix},
\]
and the list of weights:
\begin{gather*}
  V = V_1 \cup V_2 \cup V_3 = \{(1,1,0,0),(0,1,0,2),(1,0,0,2),\\
  (0,1,1,1),(1,0,1,1),(0,1,2,0),(1,0,2,0),(0,0,1,2),(0,0,2,1)\}.
\end{gather*}
The pair $(C,V)$ is returned. From the weights in $V$ and the
degrees of elements in $\mathcal{E}$, we deduce
\begin{align*}
  E\cong &\left(\bigwedge^2 \CC^2 \otimes \CC\right) \otimes_\CC A(-2,0)\oplus\\
  \oplus &\left(\CC^2 \otimes \Sym^2 \CC^2\right) \otimes_\CC A(-1,-2) \oplus\\
  \oplus &\left(\CC \otimes \mathbb{S}_{(2,1)} \CC^2\right) \otimes_\CC A(0,-3)
\end{align*}
as representations of $\GL_2 (\CC) \times \GL_2 (\CC)$ ($\mathbb{S}_{(2,1)}$ denotes a Schur functor \cite[\S2.1]{MR1988690}).
\end{ex}

\begin{ex}
  \label{exa:5}
  In this example, we examine the second syzygies of the map
  $\varphi$ in example \ref{exa:3}.
  As returned by our computer algebra system,
  the matrix of the third syzygies is
  \[
  \begin{pmatrix}{y}_{1}^{2}&
{y}_{1} {y}_{2}&
0&
0&
{y}_{2}^{2}&
0&
0\\
{-{x}_{2}}&
0&
{y}_{2}&
0&
0&
0&
0\\
{x}_{1}&
0&
0&
{y}_{2}&
0&
0&
0\\
0&
{-{x}_{2}}&
{-{y}_{1}}&
0&
0&
{y}_{2}&
0\\
0&
{x}_{1}&
0&
{-{y}_{1}}&
0&
0&
{y}_{2}\\
0&
0&
{x}_{1}&
{x}_{2}&
0&
0&
0\\
0&
0&
0&
0&
{-{x}_{2}}&
{-{y}_{1}}&
0\\
0&
0&
0&
0&
{x}_{1}&
0&
{-{y}_{1}}\\
0&
0&
0&
0&
0&
{x}_{1}&
{x}_{2}\\
\end{pmatrix},
  \]
where the degrees of the columns are from left to right
$(2,2)$, $(2,2)$, $(1,3)$, $(1,3)$, $(2,2)$, $(1,3)$, $(1,3)$.
Multiplying on the left by the inverse of the matrix $C$
obtained at the end of example \ref{exa:4}, we get
\begin{equation}
\label{example_matrix_2}
\begin{pmatrix}{y}_{1}^{2}&
{y}_{1} {y}_{2}&
0&
0&
{y}_{2}^{2}&
0&
0\\
0&
0&
0&
0&
{-{x}_{1}}&
0&
{y}_{1}\\
0&
0&
0&
0&
{x}_{2}&
{y}_{1}&
0\\
0&
{-{x}_{1}}&
0&
{y}_{1}&
0&
0&
{-{y}_{2}}\\
0&
{x}_{2}&
{y}_{1}&
0&
0&
{-{y}_{2}}&
0\\
{-{x}_{1}}&
0&
0&
{-{y}_{2}}&
0&
0&
0\\
{x}_{2}&
0&
{-{y}_{2}}&
0&
0&
0&
0\\
0&
0&
0&
0&
0&
{x}_{1}&
{x}_{2}\\
0&
0&
{x}_{1}&
{x}_{2}&
0&
0&
0\\
\end{pmatrix}.
\end{equation}

Let $E,F$ be free $A$-modules, $\mathcal{E},\mathcal{F}$
their respective coordinate bases,
and $\varphi \colon E\to F$ a map such that
$\mathcal{M}^{\mathcal{E}}_{\mathcal{F}} (\varphi)$ is the matrix in 
equation (\ref{example_matrix_2}).
Then, by example \ref{exa:4}, $F$ admits a basis of weight vectors
$\tilde{\mathcal{F}} = \{\tilde{f}_1,\ldots, \tilde{f}_9\}$ such
that $\mathcal{M}^{\mathcal{F}}_{\tilde{\mathcal{F}}} (\id_F)$ is
upper triangular. Moreover, if $W$ denotes the list of weights of
$\tilde{f}_1,\ldots, \tilde{f}_9$, then we know
\begin{gather*}
  W = \{(1,1,0,0),(0,1,0,2),(1,0,0,2),(0,1,1,1),\\
  (1,0,1,1),(0,1,2,0),(1,0,2,0),(0,0,1,2),(0,0,2,1)\}.
\end{gather*}

We will apply algorithm \ref{algo_multiple_degrees} to the
pair $(\mathcal{M}^{\mathcal{E}}_{\mathcal{F}} (\varphi),W)$.

We see the matrix
\[P=
\begin{pmatrix}1&
0&
0&
0&
0&
0&
0\\
0&
1&
0&
0&
0&
0&
0\\
0&
0&
0&
1&
0&
0&
0\\
0&
0&
0&
0&
1&
0&
0\\
0&
0&
1&
0&
0&
0&
0\\
0&
0&
0&
0&
0&
1&
0\\
0&
0&
0&
0&
0&
0&
1\\
\end{pmatrix}
\]
gives $\mathcal{M}^{\mathcal{E}}_{\mathcal{F}} (\varphi) P =
(M_1 | M_2)$, where
\[M_1 =
\begin{pmatrix}{y}_{1}^{2}&
{y}_{1} {y}_{2}&
{y}_{2}^{2}\\
0&
0&
{-{x}_{1}}\\
0&
0&
{x}_{2}\\
0&
{-{x}_{1}}&
0\\
0&
{x}_{2}&
0\\
{-{x}_{1}}&
0&
0\\
{x}_{2}&
0&
0\\
0&
0&
0\\
0&
0&
0\\
\end{pmatrix},
\qquad
M_2=
\begin{pmatrix}0&
0&
0&
0\\
0&
0&
0&
{y}_{1}\\
0&
0&
{y}_{1}&
0\\
0&
{y}_{1}&
0&
{-{y}_{2}}\\
{y}_{1}&
0&
{-{y}_{2}}&
0\\
0&
{-{y}_{2}}&
0&
0\\
{-{y}_{2}}&
0&
0&
0\\
0&
0&
{x}_{1}&
{x}_{2}\\
{x}_{1}&
{x}_{2}&
0&
0\\
\end{pmatrix}.\]
Moreover, the columns of $M_i$ have degree $d_i$,
where $d_1 = (2,2)$ and $d_2 = (1,3)$.

First we apply algorithm \ref{algo_single_degree}
to the pair $(M_1,W)$.
Our computer algebra system provides the following Gr\"obner
basis and change of basis matrices:
\[
G_1=
\begin{pmatrix}{y}_{2}^{2}&
{y}_{1} {y}_{2}&
{y}_{1}^{2}\\
{-{x}_{1}}&
0&
0\\
{x}_{2}&
0&
0\\
0&
{-{x}_{1}}&
0\\
0&
{x}_{2}&
0\\
0&
0&
{-{x}_{1}}\\
0&
0&
{x}_{2}\\
0&
0&
0\\
0&
0&
0\\
\end{pmatrix},
\qquad
C_1 =
\begin{pmatrix}0&
0&
1\\
0&
1&
0\\
1&
0&
0\\
\end{pmatrix}.
\]
The leading terms of the columns of $G_1$ are listed below
together with the corresponding weight calculations:
\begin{align*}
  &y_2^2 f_1,& &\weight (y_2^2) + w_1 = (0,0,0,2) + (1,1,0,0) = (1,1,0,2),\\
  &y_1 y_2 f_1,& &\weight (y_1 y_2) + w_1 = (0,0,1,1) + (1,1,0,0) = (1,1,1,1),\\
  &y_1^2 f_1,& &\weight (y_1^2) + w_1 = (0,0,2,0) + (1,1,0,0) = (1,1,2,0).
\end{align*}
If we let $V_1=\{(1,1,0,2),(1,1,1,1),(1,1,2,0)\}$,
then the algorithm returns the pair $(C_1,V_1)$.

Next we apply the same procedure to the pair $(M_2, W)$.
The computed Gr\"obner basis and change of basis matrices are
\[G_2=
\begin{pmatrix}0&
0&
0&
0\\
{y}_{1}&
0&
0&
0\\
0&
0&
{y}_{1}&
0\\
{-{y}_{2}}&
{y}_{1}&
0&
0\\
0&
0&
{-{y}_{2}}&
{y}_{1}\\
0&
{-{y}_{2}}&
0&
0\\
0&
0&
0&
{-{y}_{2}}\\
{x}_{2}&
0&
{x}_{1}&
0\\
0&
{x}_{2}&
0&
{x}_{1}\\
\end{pmatrix},
\qquad
C_2=
\begin{pmatrix}0&
0&
0&
1\\
0&
1&
0&
0\\
0&
0&
1&
0\\
1&
0&
0&
0\\
\end{pmatrix}.
\]
Below are the leading terms of the columns of $G_2$ and the weights
calculated from them:
\begin{align*}
  &x_2 f_8& &\weight (x_2) + w_8 = (0,1,0,0) + (0,0,1,2) = (0,1,1,2),\\
  &x_2 f_9,& &\weight (x_2) + w_9 = (0,1,0,0) + (0,0,2,1) = (0,1,2,1),\\
  &x_1 f_8& &\weight (x_1) + w_8 = (1,0,0,0) + (0,0,1,2) = (1,0,1,2),\\
  &x_1 f_9,& &\weight (x_1) + w_9 = (1,0,0,0) + (0,0,2,1) = (1,0,2,1).
\end{align*}
Hence, if $V_2 = \{(0,1,1,2),(0,1,2,1),(1,0,1,2),
(1,0,2,1)\}$, this run of algorithm \ref{algo_single_degree} outputs the pair $(C_2,V_2)$.

Finally, after combining these intermediate steps, algorithm
\ref{algo_multiple_degrees} constructs the matrix
\[
C = P (C_1 \oplus C_2) =
\begin{pmatrix}0&
0&
1&
0&
0&
0&
0\\
0&
1&
0&
0&
0&
0&
0\\
0&
0&
0&
0&
0&
0&
1\\
0&
0&
0&
0&
1&
0&
0\\
1&
0&
0&
0&
0&
0&
0\\
0&
0&
0&
0&
0&
1&
0\\
0&
0&
0&
1&
0&
0&
0\\
\end{pmatrix},\]
the list of weights
\begin{gather*}
  V = V_1 \cup V_2 = \{(1,1,0,2),(1,1,1,1),(1,1,2,0),\\
  (0,1,1,2),(0,1,2,1),(1,0,1,2),(1,0,2,1)\},
\end{gather*}
and returns the pair $(C,V)$. We conclude
\begin{align*}
  E \cong & \left(\bigwedge^2 \CC^2 \otimes \Sym^2 \CC^2\right)
\otimes_\CC A(-2,-2)\\
  \oplus & \left( \CC^2 \otimes \mathbb{S}_{(2,1)} \CC^2 \right)
\otimes_\CC A(-1,-3)
\end{align*}
as a representation of $\GL_2 (\CC) \times \GL_2 (\CC)$.
\end{ex}

\begin{remark}
Rather than computing a Gr\"obner basis for $\im\varphi$, algorithm \ref{algo_multiple_degrees} splits $\mathcal{M}^{\mathcal{E}}_{\mathcal{F}} (\varphi)$ into submatrices with columns of the same degree and then applies algorithm \ref{algo_single_degree} to each one of them. If  $\mathcal{M}^{\mathcal{E}}_{\mathcal{F}} (\varphi)$ has columns in degrees $d_1,\ldots,d_l$, then its reduced Gr\"obner basis (even a truncated one) could have elements in other degrees which are not part of a minimal generating set of $\im\varphi$. By computing Gr\"obner bases in single degrees we can avoid this issue altogether, hence producing another minimal map.
\end{remark}

\begin{remark}
Algorithms \ref{algo_single_degree} and \ref{algo_multiple_degrees} return the matrix $C = \mathcal{M}^{\mathcal{E}'}_{\mathcal{E}} (\id_E)$. The existence of a homogeneous basis of weight vectors $\tilde{\mathcal{E}}$ of $E$ such that $\mathcal{M}^{\mathcal{E}'}_{\tilde{\mathcal{E}}} (\id_E)$ is upper (resp. lower) triangular, is guaranteed by theorem \ref{main_thm}. However algorithms \ref{algo_single_degree} and \ref{algo_multiple_degrees} do not provide any means to recover the matrix $\mathcal{M}^{\mathcal{E}'}_{\tilde{\mathcal{E}}} (\id_E)$.
\end{remark}

\subsection{Going forward}\label{sec_algo_forward}
For a map $\varphi \colon E\rightarrow F$ of free modules in $\mod_{\righttoleftarrow T} A$, algorithm \ref{algo_multiple_degrees} provides a tool to recover the weights of $E$ from the weights of $F$ by ``moving backwards along $\varphi$''. A natural question is whether it is possible to go ``forward'' instead and recover the weights of  $F$ from those of $E$. Here are the assumptions for this section.
\begin{enumerate}[label=(\alph*),wide]
\item $\varphi\colon E\rightarrow F$ is a map of free modules in $\mod_{\righttoleftarrow T} A$.
\item The dual map $\varphi^{\vee} \colon F^\vee \to E^\vee$ is minimal.
\item The map $\varphi$ is provided in matrix form $\mathcal{M}^{\mathcal{E}}_{\mathcal{F}} (\varphi)$, where $\mathcal{E}$ is a homogeneous basis of $E$ and $\mathcal{F}$ is a homogeneous basis of $F$.
\item $\TT^n \langle \mathcal{E} \rangle$ is equipped with a position up (resp. down) module term ordering.
\item $E$ admits a homogeneous basis of weight vectors $\tilde{\mathcal{E}} = \{\tilde{e}_1,\ldots,\tilde{e}_r\}$ such that $\mathcal{M}^{\mathcal{E}}_{\tilde{\mathcal{E}}} (\id_E)$ is upper (resp. lower) triangular.
\item $V = \{v_1,\ldots,v_r\}$ is an ordered list with $v_i = \weight (\tilde{e}_i)$, $\forall i\in\{1,\ldots,r\}$.
\end{enumerate}

For a matrix $M$, let $M^\top$ denote the transpose of $M$. For an ordered list of weights $W = \{w_1,\ldots,w_s\}$, let $-W$ denote the ordered list of weights $\{-w_1,\ldots,-w_s\}$.

\begin{algo}[weight propagation going forward along a map]\label{algo_forward}
\algrenewcommand\algorithmicrequire{\textbf{Input:}}
\algrenewcommand\algorithmicensure{\textbf{Output:}}
\begin{algorithmic}[1]
\Require{\hfill
\begin{itemize}
\item $\mathcal{M}^{\mathcal{E}}_{\mathcal{F}} (\varphi)$, a matrix as in assumption (c) above
\item $V$, a list of weights as in assumption (f) above
\end{itemize}
}
\Ensure{\hfill
  \begin{itemize}
  \item $C = \mathcal{M}^{\mathcal{F}}_{\mathcal{F}'} (\id_F)$,
    a change of basis in $F$ such that the leading terms of the
    columns of $\mathcal{M}^{(\mathcal{F}')^\vee}_{\mathcal{E}^\vee} (\varphi^\vee)$
    are all different
  \item $W = \{w_1,\ldots,w_s\}$, a list of weights such that
    $w_i = \weight (\tilde{f}_i)$ for a homogeneous basis of weight
    vectors $\{\tilde{f}_1,\ldots,\tilde{f}_s\}$ of $F$
  \end{itemize}
~\hfill
}
\Function{PropagateForward}{$\mathcal{M}^{\mathcal{E}}_{\mathcal{F}} (\varphi),V$}
	\If{the module term ordering on $\TT^n \langle \mathcal{E} \rangle$ is position up}
		\State equip $\TT^n \langle \mathcal{E}^\vee \rangle$ with a position down module term ordering
	\ElsIf{the module term ordering on $\TT^n \langle \mathcal{E} \rangle$ is position down}
		\State equip $\TT^n \langle \mathcal{E}^\vee \rangle$ with a position up module term ordering
	\EndIf
	\State set $(C',W') := \text{\textsc{Propagate}} (\mathcal{M}^{\mathcal{E}}_{\mathcal{F}} (\varphi)^\top, -V)$
	\State set $C := (C')^\top$
	\State set $W := -W'$
	\State \textbf{return} $(C,W)$
\EndFunction
\end{algorithmic}
\end{algo}

We will present examples of algorithm \ref{algo_forward}
in action at the end of section \ref{sec_algo_res},
as part of our application to free resolutions.

\begin{prop}\label{algo_forward_prop}
Under the assumptions of this section, applying algorithm \ref{algo_forward} yields a pair $(C, W=\{w_1,\ldots,w_s\})$ that satisfies the following properties:
\begin{enumerate}[label=\Roman*.,ref=\Roman*,wide]
\item $F$ admits a homogeneous basis $\mathcal{F}' = \{f'_1,\ldots,f'_s\}$ such that $C = \mathcal{M}^{\mathcal{F}}_{\mathcal{F}'} (\id_F)$ and the terms $\LT (\varphi^\vee ((f'_1)^\vee)),\ldots,\LT (\varphi^\vee ((f'_s)^\vee))$ are all different;
\item $F$ admits a homogeneous basis of weight vectors $\tilde{\mathcal{F}} = \{\tilde{f}_1,\ldots,\tilde{f}_s\}$ such that $\mathcal{M}_{\mathcal{F}'}^{\tilde{\mathcal{F}}} (\id_E)$ is lower (resp. upper) triangular and $\weight (\tilde{f}_i) = w_i$, for all $i \in \{1,\ldots,s\}$.
\end{enumerate}
\end{prop}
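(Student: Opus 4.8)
The plan is to read algorithm~\ref{algo_forward} as a ``dualized'' invocation of algorithm~\ref{algo_multiple_degrees}: the real work is done by \textsc{Propagate} applied to the dual map $\varphi^\vee\colon F^\vee\to E^\vee$, after which the output is transported back across the canonical identification $F^{\vee\vee}\cong F$. Three elementary facts carry the argument. First, in the matrix notation of \cite[p.~503]{MR1878556}, $\mathcal{M}^{\mathcal{E}}_{\mathcal{F}}(\varphi)^\top=\mathcal{M}^{\mathcal{F}^\vee}_{\mathcal{E}^\vee}(\varphi^\vee)$; this is the usual ``matrix of the transpose'' identity and it is valid in $\mod_{\righttoleftarrow T} A$ because each $f_i^\vee$ is $A$-linear. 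Second, by propositions~\ref{dual_free_mod_structure} and~\ref{weight_in_dual}, $E^\vee$ and $F^\vee$ are again free modules in $\mod_{\righttoleftarrow T} A$, and the dual of a homogeneous basis of weight vectors is a homogeneous basis of weight vectors with the negated weights. Third, dualizing a scalar change of basis replaces its matrix by the inverse transpose, so it turns an upper triangular change of basis into a lower triangular one and vice versa, and it exchanges an ``increasing leading terms'' order with a ``decreasing leading terms'' one; this is exactly why the algorithm switches the orientation of the module term ordering on lines~2--6.

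First I would verify that the pair $(\mathcal{M}^{\mathcal{E}}_{\mathcal{F}}(\varphi)^\top,-V)$ passed to \textsc{Propagate} on line~7 satisfies the standing assumptions (a)--(e) preceding algorithm~\ref{algo_multiple_degrees}, now for the map $\varphi^\vee\colon F^\vee\to E^\vee$. Assumption~(a) is hypothesis~(b) of the present section, that $\varphi^\vee$ is minimal. Assumptions~(b) and~(c) hold by the transpose identity above, with $\mathcal{F}^\vee,\mathcal{E}^\vee$ the relevant coordinate bases and with $\TT^n\langle\mathcal{E}^\vee\rangle$ equipped on lines~2--6 with the position down (resp.\ up) ordering, opposite to the one on $\TT^n\langle\mathcal{E}\rangle$. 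For assumption~(d): $E^\vee$ carries the homogeneous basis of weight vectors $\tilde{\mathcal{E}}^\vee$, and $\mathcal{M}^{\mathcal{E}^\vee}_{\tilde{\mathcal{E}}^\vee}(\id_{E^\vee})$ is the inverse transpose of $\mathcal{M}^{\mathcal{E}}_{\tilde{\mathcal{E}}}(\id_E)$, hence lower (resp.\ upper) triangular when the latter is upper (resp.\ lower) triangular---precisely the shape demanded by the orientation fixed on lines~2--6. Finally assumption~(e) holds because, by proposition~\ref{weight_in_dual}, $-V$ is the list of weights of $\tilde{\mathcal{E}}^\vee$. Thus the call on line~7 is legitimate and the conclusion of proposition~\ref{algo_single_degree_prop} (which, as noted, holds for algorithm~\ref{algo_multiple_degrees} as well) applies to it.

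It then remains to translate that output back to $F$. Proposition~\ref{algo_single_degree_prop} produces $(C',W')$ for which $F^\vee$ admits a homogeneous basis $(\mathcal{F}^\vee)'$ with $C'=\mathcal{M}^{(\mathcal{F}^\vee)'}_{\mathcal{F}^\vee}(\id_{F^\vee})$ and pairwise distinct leading terms $\LT(\varphi^\vee(g))$, $g\in(\mathcal{F}^\vee)'$, and a homogeneous basis of weight vectors $\widetilde{\mathcal{F}^\vee}$ with $\mathcal{M}^{(\mathcal{F}^\vee)'}_{\widetilde{\mathcal{F}^\vee}}(\id_{F^\vee})$ triangular (of the shape imposed by the chosen orientation) and weights $W'$. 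Let $\mathcal{F}'$ be the basis of $F$ dual to $(\mathcal{F}^\vee)'$ and $\tilde{\mathcal{F}}$ the one dual to $\widetilde{\mathcal{F}^\vee}$; since the double dual of a free module is canonically itself, $(\mathcal{F}')^\vee=(\mathcal{F}^\vee)'$, so the terms $\LT(\varphi^\vee((f'_i)^\vee))$ are distinct, and the transpose relation for dual changes of basis gives $\mathcal{M}^{\mathcal{F}}_{\mathcal{F}'}(\id_F)=(C')^\top=C$, the matrix returned on line~8; this is part~I. For part~II, proposition~\ref{weight_in_dual} yields $\weight(\tilde f_i)=-w'_i=w_i$ (line~9), while $\mathcal{M}^{\tilde{\mathcal{F}}}_{\mathcal{F}'}(\id_F)$ is the transpose of the triangular matrix $\mathcal{M}^{(\mathcal{F}^\vee)'}_{\widetilde{\mathcal{F}^\vee}}(\id_{F^\vee})$, hence triangular of the opposite shape, as asserted. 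The only delicate point---and the main obstacle---is precisely this bookkeeping: one must track carefully how ``position up/down'' and ``upper/lower triangular'' interchange under transposition, so that the orientation switch on lines~2--6 is exactly compensated and the hypotheses and conclusions match up; beyond that there is no content not already contained in propositions~\ref{algo_single_degree_prop} and~\ref{weight_in_dual}.
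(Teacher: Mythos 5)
Your proposal follows the same route as the paper's own proof: read algorithm \ref{algo_forward} as algorithm \ref{algo_multiple_degrees} applied to $\varphi^\vee\colon F^\vee\to E^\vee$, justify the input via the identity $\mathcal{M}^{\mathcal{E}}_{\mathcal{F}}(\varphi)^\top=\mathcal{M}^{\mathcal{F}^\vee}_{\mathcal{E}^\vee}(\varphi^\vee)$, the minimality hypothesis on $\varphi^\vee$, propositions \ref{free_mod_structure}, \ref{dual_free_mod_structure}, \ref{weight_in_dual} (negated weights), and the fact that dualizing a triangular change of basis flips its shape, which forces the switch of module term ordering on lines 2--6; then transport the output of proposition \ref{algo_single_degree_prop} back through $F^{\vee\vee}\cong F$ by transposing and negating. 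You are in fact more explicit than the paper, whose proof stops at explaining lines 2--9 and never spells out the back-translation of conclusions I and II.

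The one step you do not actually carry out is the one you yourself call the main obstacle, and it conceals a genuine wrinkle. In the position up case the dual run uses the position down ordering, so proposition \ref{algo_single_degree_prop} yields $V:=\mathcal{M}^{(\mathcal{F}^\vee)'}_{\widetilde{\mathcal{F}^\vee}}(\id_{F^\vee})$ \emph{lower} triangular; your (correct) identity $\mathcal{M}^{\tilde{\mathcal{F}}}_{\mathcal{F}'}(\id_F)=V^\top$ then makes that matrix \emph{upper} triangular, i.e.\ upper (resp.\ lower) triangular, which is the opposite of the ``lower (resp.\ upper)'' printed in the proposition, so ``as asserted'' cannot simply be waved through. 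Note that the orientation you actually derive is the coherent one: $\mathcal{M}^{\mathcal{F}'}_{\tilde{\mathcal{F}}}(\id_F)$ then has the same shape (inverting preserves triangularity type, transposing flips it), which is precisely assumption (e) needed to invoke algorithm \ref{algo_forward} again at the next step of algorithm \ref{algo_res}. So you should perform the up/down versus upper/lower case analysis explicitly and record the resulting orientation, flagging the discrepancy with the statement as written, rather than asserting agreement with it.
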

\begin{proof}
Algorithm \ref{algo_forward} applies algorithm \ref{algo_multiple_degrees} to the dual map $\varphi^{\vee} \colon F^\vee \to E^\vee$, which is assumed to be minimal. Notice that the matrix $\mathcal{M}^{\mathcal{E}}_{\mathcal{F}} (\varphi)^\top$ used on line 7 of algorithm \ref{algo_forward}, is in fact the matrix $\mathcal{M}_{\mathcal{E}^\vee}^{\mathcal{F}^\vee} (\varphi^\vee)$, the matrix of $\varphi^\vee$ with respect to the dual bases ${\mathcal{E}^\vee}$ of $E^\vee$ and ${\mathcal{F}^\vee}$ of $F^\vee$.

If $\mathcal{M}^{\mathcal{E}}_{\tilde{\mathcal{E}}} (\id_E)$ is upper (resp. lower) triangular, then $\mathcal{M}_{\mathcal{E}^{\vee}}^{\tilde{\mathcal{E}}^{\vee}} (\id_{E^\vee}) = \mathcal{M}^{\mathcal{E}}_{\tilde{\mathcal{E}}} (\id_E)^\top$ is lower (resp. upper) triangular. Therefore, to use algorithm \ref{algo_multiple_degrees} correctly, $\TT^n \langle \mathcal{E}^\vee \rangle$ must be equipped with a position down (resp. up) module term ordering, as is done on lines 2-6.

The weights of $E$ are provided in $V$ as part of the input. Thanks to proposition \ref{dual_free_mod_structure}, $E^\vee \cong (E/\mathfrak{m} E)^* \otimes_\KK A$, while $E\cong (E/\mathfrak{m} E) \otimes_\KK A$ by proposition \ref{free_mod_structure}. Moreover, by proposition \ref{weight_in_dual}, the weights of $(E/\mathfrak{m} E)^*$ are the opposites of the weights of $E/\mathfrak{m} E$. Thus $\weight(\tilde{e}_i^\vee) = -\weight(\tilde{e}_i)$, and that is why we apply algorithm \ref{algo_multiple_degrees} with the list $-V$ as input.

After algorithm \ref{algo_multiple_degrees} is applied, the results must be transferred from $F^\vee$ to $F$. For that reason, we transpose $C'$ on line 8 and switch to the opposite weights of $W'$ on line 9.
\end{proof}

Proposition \ref{algo_forward_prop} requires that the map $\varphi^{\vee} \colon F^\vee \to E^\vee$ is minimal. Unfortunately, $\varphi\colon E\to F$ being minimal does not, in general, imply that $\varphi^{\vee} \colon F^\vee \to E^\vee$ is minimal.

\begin{ex}\label{non_minimal_dual}
Let $A=\mathbb{C} [x]$. The map of free $A$-modules
\[
A(-1) \xrightarrow{
\left(
\begin{smallmatrix}
x\\ x
\end{smallmatrix}
\right)
} A^2
\]
is minimal. However, the dual map
\[
A^2 \xrightarrow{
\left(
\begin{smallmatrix}
x & x
\end{smallmatrix}
\right)
} A(1)
\]
is not minimal because the element $(1,-1)^\top$ belongs to the $\CC$-vector space generated by the coordinate basis of $A^2$ and is sent to zero (compare with proposition \ref{minimality}).
\end{ex}

The following is a useful criterion for the minimality of a dual map, when computing the weights with respect to a torus contained inside a group $G$.
\begin{prop}
Let $G$ be an algebraic group over $\KK$ and let $\varphi\colon E\to F$ be a non zero map of free modules in $\mod_{\righttoleftarrow G} A$. If $F/\mathfrak{m} F$ is an irreducible representation of $G$, then the dual map $\varphi^\vee \colon F^\vee \to E^\vee$ is minimal.
\end{prop}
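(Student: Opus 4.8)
The plan is to deduce minimality of $\varphi^\vee$ from Proposition \ref{minimality}, part \ref{min2}, after first arranging that $F^\vee$ is generated in a single degree; the irreducibility hypothesis is exactly what makes this possible and also supplies the injectivity that part \ref{min2} needs.

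First I would reduce to a single degree. Since $\varphi\neq 0$ the module $F$ is nonzero, so $W:=F/\mathfrak{m}F$ is a nonzero finite dimensional graded representation of $G$. The $G$-action on $W$ respects the grading, hence each graded component $W_d$ is a $G$-subrepresentation of $W$; as $W$ is irreducible it must be concentrated in a single degree $d\in\ZZ^m$. Consequently $F\cong W\otimes_\KK A$ (Proposition \ref{free_mod_structure}) is generated in degree $d$, and $F^\vee\cong (F/\mathfrak{m}F)^*\otimes_\KK A=W^*\otimes_\KK A$ (Proposition \ref{dual_free_mod_structure}) is generated in the single degree $-d$. Note too that $W^*$ is irreducible, being the dual of a finite dimensional irreducible representation.

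Next I would examine the restriction of $\varphi^\vee$ to $\langle\mathcal{F}^\vee\rangle_\KK$, where $\mathcal{F}^\vee$ is the homogeneous basis of $F^\vee$ dual to a homogeneous basis of $F$ coming from a $\KK$-basis of $W$. Under the identification $F^\vee\cong W^*\otimes_\KK A$, this span is exactly the image $W^*\otimes 1_A$ of the natural inclusion $W^*\hookrightarrow W^*\otimes_\KK A$, $w\mapsto w\otimes 1_A$, which is a $G$-equivariant embedding of a copy of the irreducible $W^*$. Since $G$ fixes $1_A$, the subspace $W^*\otimes 1_A$ is $G$-stable, so $\varphi^\vee|_{W^*\otimes 1_A}$ is $G$-equivariant and its kernel is a subrepresentation of $W^*$; by irreducibility this kernel is $0$ or all of $W^*\otimes 1_A$. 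The second case cannot occur: if $\varphi^\vee$ vanished on $W^*\otimes 1_A$, then since that subspace generates $F^\vee$ as an $A$-module and $\varphi^\vee$ is $A$-linear, we would get $\varphi^\vee=0$, hence $\varphi=0$ (for a map of free modules the dual map is zero only if the map itself is zero, since a nonzero element of a free module is detected by some coordinate functional, which lies in $F^\vee$), contradicting $\varphi\neq 0$. So the kernel is $0$; that is, the restriction of $\varphi^\vee$ to $\langle\mathcal{F}^\vee\rangle_\KK$ is injective.

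With these two facts in hand — $F^\vee$ generated in a single degree, and $\varphi^\vee$ injective on the span of a homogeneous basis of $F^\vee$ — Proposition \ref{minimality}, part \ref{min2}, applied with $\varphi^\vee\colon F^\vee\to E^\vee$ in place of $\varphi\colon E\to F$, yields that $\varphi^\vee$ is minimal. The argument is short, and the only place calling for care is the single-degree reduction together with the Schur-type dichotomy above; this is precisely what the hypothesis buys us, since in Example \ref{non_minimal_dual}, where $F/\mathfrak{m}F$ is reducible, it is exactly this injectivity that fails and renders the dual map non-minimal.
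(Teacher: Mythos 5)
Your proposal is correct and follows essentially the same route as the paper: concentrate $F/\mathfrak{m}F$ in a single degree, note that the kernel of $\varphi^\vee$ in the generating degree is a subrepresentation of an irreducible (your $W^*\otimes 1_A$ equals $(F^\vee)_{-d}$ since the grading is positive), rule out the kernel being everything using $\varphi\neq 0$, and conclude via part II of the minimality proposition. The only difference is cosmetic, namely that you spell out why $\varphi^\vee=0$ would force $\varphi=0$, which the paper leaves implicit.
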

\begin{proof}
Because $F/\mathfrak{m} F$ is an irreducible representation of $G$, the graded vector space $F/\mathfrak{m} F$ is concentrated in a single degree $d\in\ZZ^m$. Equivalently, $F$ is generated in a single degree $d$ and $F_d$ is an irreducible representation of $G$. Looking at duals, we have that $F^\vee$ is generated in degree $-d$ and $(F^\vee)_{-d} \cong \Hom_\KK (F_d,\KK)$ by proposition \ref{dual_free_mod_structure}. Then $(F^\vee)_{-d}$ is irreducible because $F_d$ is. Notice that $(\ker\varphi^\vee)_{-d}$ is a subrepresentation of $G$ inside $(F^\vee)_{-d}$, hence $(\ker\varphi^\vee)_{-d} = (F^\vee)_{-d}$ or $(\ker\varphi^\vee)_{-d} = 0$ by the irreducibility of $(F^\vee)_{-d}$. The first option would imply $\varphi^\vee$ is the zero map, given that $F^\vee$ is generated in degree $-d$, and this would violate the assumption that $\varphi$ is non zero. Therefore $(\ker\varphi^\vee)_{-d} = 0$ and thus $\varphi^\vee$ is minimal by part \ref{min2} of proposition \ref{minimality}.
\end{proof}

\subsection{Weight propagation along resolutions}\label{sec_algo_res}
We can now develop an algorithm to propagate weights along minimal free resolutions. The following will be assumed throughout this section.
\begin{enumerate}[label=(\alph*),ref=(\alph*),wide]
\item The complex of free modules and maps in $\mod_{\righttoleftarrow T} A$ 
\[F_\bullet\colon \quad 0\to F_m \xrightarrow{d_m} F_{m-1} \to \ldots \to F_i \xrightarrow{d_i} F_{i-1} \to \ldots \to F_1 \xrightarrow{d_1} F_0\]
with $m\leq n$, is a minimal free resolution of a module $M$ in  $\mod_{\righttoleftarrow T} A$.
\item For all $i\in\{1,\ldots,n\}$, the map $d_i$ is provided in matrix form $\mathcal{M}^{\mathcal{F}_i}_{\mathcal{F}_{i-1}} (d_i)$, where $\mathcal{F}_i$ is a homogeneous basis of $F_i$ (also for $i=0$).
\item\label{cond1} There exists $c\in\{0,\ldots,n\}$ such that $F_c$ admits a homogeneous basis of weight vectors $\tilde{\mathcal{F}}_c$ such that $\mathcal{M}^{\mathcal{F}_c}_{\tilde{\mathcal{F}}_c} (\id_{F_c})$ is upper (resp. lower) triangular.
\item $V_c$ is an ordered list containing the weights of the elements of $\tilde{\mathcal{F}}_c$.
\item\label{cond2} For all $i\in\{1,\ldots,c\}$, the maps $d_i^\vee \colon F^\vee_{i-1} \to F^\vee_i$ are minimal.
\item For all $i\in\{0,\ldots,n\}$, $\TT^n \langle \mathcal{F}_i \rangle$ is equipped with a position up (resp. down) module term ordering.
\end{enumerate}
The reason for requiring conditions \ref{cond1}--\ref{cond2} is so that the weights can be specified for any one free module in the complex $F_\bullet$. The idea is then to propagate weights backwards along the maps $d_{c+1},\ldots,d_m$ (using algorithm \ref{algo_multiple_degrees}) and forward along the maps $d_c,\ldots,d_1$ (using algorithm \ref{algo_forward}). As seen in examples \ref{exa:4} and \ref{exa:5}, additional change of bases will be required along the way to ensure the bases used are related to bases of weight vectors by triangular change of bases.

\begin{remark}
A minimal free resolution of a module $M$ is typically obtained, in a computational setting, from a presentation of $M$. If the presentation is minimal, then it is also the first differential $d_1\colon F_1\to F_0$ in the resolution. In our experience, homogeneous bases of weight vectors are often the most natural choice to write a matrix for the presentation $d_1$. Using the notation above, this means we have $\tilde{\mathcal{F}}_0 = \mathcal{F}_0$; then $\mathcal{M}^{\mathcal{F}_0}_{\tilde{\mathcal{F}}_0} (\id_{F_0})$ is the identity matrix and assumption \ref{cond1} is easily met.
\end{remark}

\begin{remark}
Sometimes a minimal free resolution of a module $M$ can be constructed starting from some differential $d_{c+1}\colon F_{c+1}\to F_c$ other than the first one. Our algorithm is designed to deal with this more general setup.
For examples of resolutions constructed from the middle (or from the end) we invite the reader to consult \cite{Galetto:2013aa}.
\end{remark}

\begin{algo}[weight propagation along a resolution]\label{algo_res}
\algrenewcommand\algorithmicrequire{\textbf{Input:}}
\algrenewcommand\algorithmicensure{\textbf{Output:}}
\begin{algorithmic}[1]
\Require{\hfill
\begin{itemize}
\item $\mathcal{M}^{\mathcal{F}_1}_{\mathcal{F}_{0}} (d_1),\ldots,\mathcal{M}^{\mathcal{F}_m}_{\mathcal{F}_{m-1}} (d_m)$, matrices of the maps in a minimal free resolution as in assumption (b) above
\item $V_c$, a list of weights as in assumption (d) above
\end{itemize}
}
\Ensure{\hfill
  \begin{itemize}
  \item $(V_0,\ldots,V_m)$, a tuple of lists, with $V_i$ a list
    of weights for a homogeneous basis of weight vectors of $F_i$
  \end{itemize}
~\hfill
}
\Function{PropagateResolution}{$\mathcal{M}^{\mathcal{F}_1}_{\mathcal{F}_{0}} (d_1),\ldots,\mathcal{M}^{\mathcal{F}_m}_{\mathcal{F}_{m-1}} (d_m),V_c$}
	\State $C_c := \mathcal{M}^{\mathcal{F}_c}_{\mathcal{F}_c} (\id_{F_c})$
	\For{$i\in\{1,\ldots,m-c\}$}
		\State $(C_{c+i},V_{c+i}) := \text{\textsc{Propagate}} (C^{-1}_{c+i-1} \mathcal{M}^{\mathcal{F}_{c+i}}_{\mathcal{F}_{c+i-1}} (d_{c+i}), V_{c+i-1})$
	\EndFor
	\For{$i\in\{1,\ldots,c\}$}
		\State $(C_{c-i},V_{c-i}) := \text{\textsc{PropagateForward}} (\mathcal{M}^{\mathcal{F}_{c-i+1}}_{\mathcal{F}_{c-i}} (d_{c-i+1}) C^{-1}_{c-i+1}, V_{c-i+1})$
	\EndFor
	\State \textbf{return} $(V_0,\ldots,V_m)$
\EndFunction
\end{algorithmic}
\end{algo}

\begin{ex}
  \label{exa:6}
  Consider the same setup as in example \ref{exa:3}.
  Let $M$ be the module $A/(x_1,x_2,y_1^2,y_1 y_2,y_2^2)$.
  We can compute a minimal free resolution of $M$
  using a computer algebra system.
  The result is a complex
  \[F_\bullet\colon \quad 0\to F_4 \xrightarrow{d_4}
  F_3 \xrightarrow{d_3} F_2 \xrightarrow{d_2}
  F_1 \xrightarrow{d_1} F_0\]
  where $F_0 = A$.
  In particular, $F_0$ has a homogeneous basis (of
  weight vectors) $\tilde{\mathcal{F}}=\mathcal{F}=
  \{1_A\}$ and its only element has weight $\{(0,0,0,0)\}$.
  The maps $d_1,\ldots,d_4$ are represented by matrices
  with respect to homogeneous bases $\mathcal{F}_i$ of $F_i$.
  In our case, the map $d_1$ and the bases $\mathcal{F}_0$,
   $\mathcal{F}_1$ are supplied by the user so that
  \[\mathcal{M}^{\mathcal{F}_1}_{\mathcal{F}_0} (d_1)=
  \begin{pmatrix}
    x_1 & x_2 & y_1^2 & y_1 y_2 & y_2^2
  \end{pmatrix}.\]
  The remaining bases and matrices are determined by
  the software.
  We will apply algorithm \ref{algo_res} to the tuple
  $(\mathcal{M}^{\mathcal{F}_1}_{\mathcal{F}_0} (d_1),
  \mathcal{M}^{\mathcal{F}_2}_{\mathcal{F}_1} (d_2),
  \mathcal{M}^{\mathcal{F}_3}_{\mathcal{F}_2} (d_3),
  \mathcal{M}^{\mathcal{F}_4}_{\mathcal{F}_3} (d_4),V_0)$,
  where $V_0 = \{(0,0,0,0)\}$.
  Note the parameter $c=0$, so lines 6 through 8
  are not executed while lines 3 through 5 call
  upon algorithm \ref{algo_multiple_degrees}.
  
  The application of algorithm \ref{algo_multiple_degrees}
  to the pair $(\mathcal{M}^{\mathcal{F}_1}_{\mathcal{F}_0}
  (d_1),V_0)$ was carried out in example \ref{exa:3};
  there the output was the pair $(C,V)$, whereas here
  we call it $(C_1,V_1)$.
  Similarly, example \ref{exa:4} details
  the call to algorithm \ref{algo_multiple_degrees}
  for the pair
  $(C_1^{-1} \mathcal{M}^{\mathcal{F}_2}_{\mathcal{F}_1} (d_2),V_1)$,
  which outputs the pair $(C_2,V_2)$, and
  example \ref{exa:5} details
  the call to algorithm \ref{algo_multiple_degrees}
  for the pair
  $(C_2^{-1} \mathcal{M}^{\mathcal{F}_3}_{\mathcal{F}_2} (d_3),V_2)$,
  which outputs the pair $(C_3,V_3)$.
  We complete the picture by illustrating how algorithm
  \ref{algo_multiple_degrees} is applied to the pair
  $(C_3^{-1} \mathcal{M}^{\mathcal{F}_4}_{\mathcal{F}_3} (d_4),V_3)$.
  
  Recall that
  \[C_3 = 
  \begin{pmatrix}0&
0&
1&
0&
0&
0&
0\\
0&
1&
0&
0&
0&
0&
0\\
0&
0&
0&
0&
0&
0&
1\\
0&
0&
0&
0&
1&
0&
0\\
1&
0&
0&
0&
0&
0&
0\\
0&
0&
0&
0&
0&
1&
0\\
0&
0&
0&
1&
0&
0&
0\\
\end{pmatrix},\]
while
\begin{gather*}
  V_3 = \{(1,1,0,2),(1,1,1,1),(1,1,2,0),\\
  (0,1,1,2),(0,1,2,1),(1,0,1,2),(1,0,2,1)\}.
\end{gather*}
  The matrix for $d_4$ obtained via our computer
  algebra system is
  \[\mathcal{M}^{\mathcal{F}_4}_{\mathcal{F}_3} (d_4)=
  \begin{pmatrix}{-{y}_{2}}&
0\\
{y}_{1}&
{-{y}_{2}}\\
{-{x}_{2}}&
0\\
{x}_{1}&
0\\
0&
{y}_{1}\\
0&
{-{x}_{2}}\\
0&
{x}_{1}\\
\end{pmatrix},
  \]
  with both columns having degree $(2,3)$. Therefore:
  \[C_3^{-1} \mathcal{M}^{\mathcal{F}_4}_{\mathcal{F}_3} (d_4)=
  \begin{pmatrix}0&
{y}_{1}\\
{y}_{1}&
{-{y}_{2}}\\
{-{y}_{2}}&
0\\
0&
{x}_{1}\\
{x}_{1}&
0\\
0&
{-{x}_{2}}\\
{-{x}_{2}}&
0\\
\end{pmatrix}.
  \]
  Since both columns have the same degree, no further splitting
  of blocks is required and we can apply algorithm
  \ref{algo_single_degree} directly.
  Again using our computer algebra system, we obtain the
  following Gr\"obner basis and change of basis matrices:
  \[G=
  \begin{pmatrix}{y}_{1}&
0\\
{-{y}_{2}}&
{y}_{1}\\
0&
{-{y}_{2}}\\
{x}_{1}&
0\\
0&
{x}_{1}\\
{-{x}_{2}}&
0\\
0&
{-{x}_{2}}\\
\end{pmatrix},
  \qquad
  C_4 = 
  \begin{pmatrix}0&
1\\
1&
0\\
\end{pmatrix}.
  \]
  The leading terms of the columns of $G$ are
  \[
  \begin{pmatrix}
    0 & 0\\
    0 & 0\\
    0 & 0\\
    x_1 & 0\\
    0 & x_1\\
    0 & 0\\
    0 & 0\\
  \end{pmatrix},
  \]
  and their weights are given by
  \begin{gather*}
    (1,0,0,0)+(0,1,1,2) = (1,1,1,2),\\
    (1,0,0,0)+(0,1,2,1) = (1,1,2,1).
  \end{gather*}
  Thus the last list $V_4=\{(1,1,1,2),(1,1,2,1)\}$
  is composed and the algorithm concludes its run 
  by returning the tuple $(V_0,V_1,V_2,V_3,V_4)$.
  We have already indicated the representation theoretic
  structure of the modules $F_1$ through $F_3$ in previous 
  examples. From the weights in the list $V_4$ we deduce:
  \[F_4 \cong \left(\bigwedge^2 \CC^2 \otimes
    \mathbb{S}_{(2,1)} \CC^2 \right) \otimes_\CC A(-2,-3).\]
\end{ex}

\begin{prop}
Under the assumptions of this section, algorithm \ref{algo_res} returns a tuple $(V_0,\ldots,V_m)$, where, for all $i\in\{0,\ldots,m\}$, $V_i$ is an ordered list containing the weights of the elements of a homogeneous basis of weight vectors $\tilde{\mathcal{F}}_i$ of $F_i$.
\end{prop}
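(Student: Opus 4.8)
The plan is to proceed by induction along the two chains of differentials traversed by Algorithm~\ref{algo_res}: first the maps $d_{c+1},\dots,d_m$ handled by the first \textbf{for} loop (lines~3--5), then the maps $d_c,\dots,d_1$ handled by the second (lines~6--8). The invariant I would carry along is that, once index $j$ has been processed, the algorithm has produced a matrix $C_j=\mathcal{M}^{\mathcal{F}'_j}_{\mathcal{F}_j}(\id_{F_j})$, for some homogeneous basis $\mathcal{F}'_j$ of $F_j$, together with an ordered list $V_j$, such that $F_j$ admits a homogeneous basis of weight vectors $\tilde{\mathcal{F}}_j$ for which $\mathcal{M}^{\mathcal{F}'_j}_{\tilde{\mathcal{F}}_j}(\id_{F_j})$ is upper (resp.\ lower) triangular and $V_j$ is the list of weights of the elements of $\tilde{\mathcal{F}}_j$. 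The base case $j=c$ is immediate: line~2 sets $C_c$ to the identity matrix, so $\mathcal{F}'_c=\mathcal{F}_c$, and then the invariant is exactly assumption~\ref{cond1} together with the defining property of the input list $V_c$.

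For the inductive step of the first loop, assume the invariant at $j=c+i-1$. Because $C_{c+i-1}=\mathcal{M}^{\mathcal{F}'_{c+i-1}}_{\mathcal{F}_{c+i-1}}(\id_{F_{c+i-1}})$, left multiplication by $C^{-1}_{c+i-1}$ amounts to the change of basis in the codomain of $d_{c+i}$:
\[
C^{-1}_{c+i-1}\,\mathcal{M}^{\mathcal{F}_{c+i}}_{\mathcal{F}_{c+i-1}}(d_{c+i})=\mathcal{M}^{\mathcal{F}_{c+i}}_{\mathcal{F}'_{c+i-1}}(d_{c+i}),
\]
the matrix of $d_{c+i}$ with respect to the coordinate basis $\mathcal{F}_{c+i}$ of its domain and the basis $\mathcal{F}'_{c+i-1}$ of its codomain. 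I would then check that the pair consisting of this matrix and $V_{c+i-1}$ satisfies assumptions (a)--(e) of \S\ref{sec_algo_map}: $d_{c+i}$ is minimal since it is a differential in a minimal free resolution; we equip $\TT^n\langle\mathcal{F}'_{c+i-1}\rangle$ with the position up (resp.\ down) module term ordering determined by the one already fixed on $\TT^n$ and by the order of the elements of $\mathcal{F}'_{c+i-1}$; and assumptions (d) and (e) of \S\ref{sec_algo_map} are precisely the inductive hypothesis at $c+i-1$. Hence Algorithm~\ref{algo_multiple_degrees} applies and Proposition~\ref{algo_single_degree_prop}, which also holds for that algorithm, produces $C_{c+i}$ and $V_{c+i}$ verifying the invariant at $j=c+i$.

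The inductive step of the second loop is dual. Assuming the invariant at $j=c-i+1$, right multiplication by $C^{-1}_{c-i+1}$ rewrites the differential $d_{c-i+1}$ with its domain in the good basis:
\[
\mathcal{M}^{\mathcal{F}_{c-i+1}}_{\mathcal{F}_{c-i}}(d_{c-i+1})\,C^{-1}_{c-i+1}=\mathcal{M}^{\mathcal{F}'_{c-i+1}}_{\mathcal{F}_{c-i}}(d_{c-i+1}).
\]
Here the dual map $d_{c-i+1}^\vee$ is minimal by assumption~\ref{cond2}, so this matrix together with $V_{c-i+1}$ satisfies assumptions (a)--(f) of \S\ref{sec_algo_forward}; Proposition~\ref{algo_forward_prop} then produces $C_{c-i}$ and $V_{c-i}$ and re-establishes the invariant at $j=c-i$. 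Because the two loops together process every index in $\{0,\dots,m\}$, the invariant holds for all of them, and in particular each $V_i$ is the list of weights of the elements of a homogeneous basis of weight vectors of $F_i$, which is the assertion.

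The main point demanding care lies in the second loop, where passing to $d_{c-i+1}^\vee$ transposes the change-of-basis matrices involved and Algorithm~\ref{algo_forward} compensates by switching the ``position up/down'' ordering on the dual module before calling Algorithm~\ref{algo_multiple_degrees}; one must verify that the orientation of triangularity returned by Proposition~\ref{algo_forward_prop} for $F_{c-i}$ is again compatible with the module term ordering chosen on the new basis $\mathcal{F}'_{c-i}$, so that the hypotheses of \S\ref{sec_algo_forward} remain satisfied at the next iteration. Apart from this, the only remaining work is the routine bookkeeping confirming that each left or right multiplication by a matrix $C_j^{\pm1}$ genuinely realizes the announced change of basis in the domain or codomain of the relevant differential; everything else is a direct invocation of Propositions~\ref{algo_single_degree_prop} and~\ref{algo_forward_prop}.
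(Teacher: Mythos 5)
Your proposal is correct and takes essentially the same route as the paper: the same induction with the same invariant on pairs $(C_j,V_j)$, the forward loop justified by proposition \ref{algo_single_degree_prop} (extended to algorithm \ref{algo_multiple_degrees}) and the backward loop by proposition \ref{algo_forward_prop}, which is exactly how the paper argues (it dispatches the backward loop with a one-line ``similarly''). The only wrinkle is that your stated invariant $C_j=\mathcal{M}^{\mathcal{F}'_j}_{\mathcal{F}_j}(\id_{F_j})$ matches the output convention of algorithm \ref{algo_multiple_degrees} but not of algorithm \ref{algo_forward}, whose output is $\mathcal{M}^{\mathcal{F}}_{\mathcal{F}'}(\id_F)$; your displayed identity for the backward step implicitly uses the latter convention, so the ``routine bookkeeping'' you defer amounts precisely to keeping these two conventions straight, and with that adjustment the argument goes through as in the paper.
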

\begin{proof}
We construct inductively pairs $(C_{c+i},V_{c+i})$ satisfying the following properties:
\begin{itemize}[wide]
\item $\exists \mathcal{F}'_{c+i}$ homogeneous basis of $F_{c+i}$ such that $C_{c+i} = \mathcal{M}^{\mathcal{F}'_{c+i}}_{\mathcal{F}_{c+i}} (\id_{F_{c+i}})$;
\item $\exists \tilde{\mathcal{F}}_{c+i}$ homogeneous basis of weight vectors of $F_{c+i}$ such that $\mathcal{M}^{\mathcal{F}'_{c+i}}_{\tilde{\mathcal{F}}_{c+i}}  (\id_{F_{c+i}})$ is upper (resp. lower) triangular, and $V_{c+i}$ contains the weights of the elements of $\tilde{\mathcal{F}}_{c+i}$.
\end{itemize}
To start the induction, take $C_c =\mathcal{M}^{\mathcal{F}_c}_{\mathcal{F}_c} (\id_{F_c})$, the identity matrix, and $V_c$ as given in the input. Suppose $(C_{c+i-1},V_{c+i-1})$ has been constructed for $i>0$. Then
\[C^{-1}_{c+i-1} \mathcal{M}^{\mathcal{F}_{c+i}}_{\mathcal{F}_{c+i-1}} (d_{c+i}) = \mathcal{M}^{\mathcal{F}_{c+i}}_{\mathcal{F}'_{c+i-1}} (d_{c+i})\]
and the matrix $\mathcal{M}^{\mathcal{F}'_{c+i-1}}_{\tilde{\mathcal{F}}_{c+i-1}}  (\id_{F_{c+i-1}})$ is upper (resp. lower) triangular by the inductive hypothesis. Since all assumptions of algorithm \ref{algo_multiple_degrees} hold, we can apply it as indicated on line 4 to produce the next pair $(C_{c+i},V_{c+i})$. The properties of the pair are then a consequence of proposition \ref{algo_single_degree_prop}. The process ends when $c+i>m$.

Similarly, we can produce pairs $(C_{c-i},V_{c-i})$ for $i>0$, using algorithm \ref{algo_forward} as indicated on line 7. The process ends when $c-i<0$.
\end{proof}

\begin{remark}
If $F_\bullet$ is a minimal free resolution of a Cohen-Macaulay $A$-module $M$ (of grade $g$), then $F^\vee_\bullet$ is a minimal free resolution (of the module $\Ext^g_A (M,A)$) (see \cite[p. 12]{MR1251956}). In particular, the duals of all differentials in $F_\bullet$ are minimal maps. 
\end{remark}

\begin{ex}
Let $A=\mathbb{C} [x]$. The complex
\[
0 \to A(-1) \xrightarrow{
\left(
\begin{smallmatrix}
x\\ x
\end{smallmatrix}
\right)
} A^2
\]
is a minimal free resolution of $A/(x) \oplus A$. The dual of the (only) differential in the resolution is not minimal, as evidenced in example \ref{non_minimal_dual}. The module $A/(x) \oplus A$ has dimension 1 but depth 0, and therefore it is not Cohen-Macaulay.
\end{ex}

\begin{ex}
  \label{exa:7}
  In this example, we will examine a  minimal free resolution
  of the homogeneous coordinate ring of a Grassmannian
  in the Pl\"ucker embedding.
  The Grassmannian we consider is the one parametrizing
  two-dimensional vector subspaces of $\CC^5$.

  Let $A=\Sym (\bigwedge^2 \CC^5)$, the homogeneous coordinate
  ring of the ambient projective space.
  We identify $A$ with the polynomial ring $\CC [p_{i,j}]$,
  for $1\leq i<j\leq 5$, where each variable $p_{i,j}$
  represents a Pl\"ucker coordinate and corresponds
  to a decomposable tensor in $\bigwedge^2 \CC^5$ in the obvious way.
  We will assume $A$ is endowed with the degree reverse
  lexicographic ordering with the variables are sorted as follows:
  \[p_{1,2}> p_{1,3}> p_{2,3}> p_{1,4}> p_{2,4}> p_{3,4}> p_{1,5}> p_{2,5}> p_{3,5}> p_{4,5}.\]
  The group $\GL_5 (\CC)$ acts naturally on $\bigwedge^2 \CC^5$,
  hence on $A$.
  The maximal torus of diagonal matrices in $\GL_5 (\CC)$ acts
  on the variables $p_{i,j}$ making them into weight vectors
  with the following weights:
  \begin{align*}
    &\weight (p_{1,2}) = (1,1,0,0,0),& &\weight (p_{1,3}) = (1,0,1,0,0),\\
    &\weight (p_{2,3}) = (0,1,1,0,0),& &\weight (p_{1,4}) = (1,0,0,1,0),\\
    &\weight (p_{2,4}) = (0,1,0,1,0),& &\weight (p_{3,4}) = (0,0,1,1,0),\\
    &\weight (p_{1,5}) = (1,0,0,0,1),& &\weight (p_{2,5}) = (0,1,0,0,1),\\
    &\weight (p_{3,5}) = (0,0,1,0,1),& &\weight (p_{4,5}) = (0,0,0,1,1).
  \end{align*}
  The homogeneous coordinate ring of our Grassmannian is the
  $A$-module $M=A/I$, where $I$ is the ideal of $A$ generated
  by the Pl\"ucker equations.
  Our computation for a minimal free resolution of $M$ yields
  the complex
  \[F_\bullet\colon \quad 0\to F_3 \xrightarrow{d_3}
  F_2 \xrightarrow{d_2} F_1 \xrightarrow{d_1} F_0;\]
  the matrices of the differentials are described below.
  Note that the free modules $F_i$ have coordinate bases
  $\mathcal{F}_i$ and the sets of terms $\mathbb{T}^n \langle
  \mathcal{F}_i \rangle$ are all endowed with the term over
  position up module term ordering.

  One can apply algorithm \ref{algo_res} with parameter $c=0$,
  knowing that $F_0 = A$ and therefore $V_0 = \{(0,0,0,0,0)\}$.
  This approach determines the weights for homogeneous bases
  of weight vectors in all free modules $F_i$.
  In particular, one can see this way that $F_3 \cong
  (\bigwedge^5 \CC^5)^{\otimes 2} \otimes_\CC A(-5)$,
  since the list of weights $V_3 = \{(2,2,2,2,2)\}$.
  In order to exemplify forward propagation of weights,
  we follow a different approach. Namely,
  we apply algorithm \ref{algo_res} to the tuple
  $(\mathcal{M}^{\mathcal{F}_1}_{\mathcal{F}_0} (d_1),
  \mathcal{M}^{\mathcal{F}_2}_{\mathcal{F}_1} (d_2),
  \mathcal{M}^{\mathcal{F}_3}_{\mathcal{F}_2} (d_3),
  V_3),$
  where $V_3 = \{(2,2,2,2,2)\}$.
  Since $c=3$, lines 3 through 5 of algorithm \ref{algo_res}
  are skipped.

  The first pass of the for loop on lines 6 through 8 applies
  algorithm \ref{algo_forward} to the pair
  $(\mathcal{M}^{\mathcal{F}_3}_{\mathcal{F}_2} (d_3),V_3)$, where
  \[\mathcal{M}^{\mathcal{F}_3}_{\mathcal{F}_2} (d_3) =
  \begin{pmatrix}-{p}_{(3,4)} {p}_{(2,5)}+{p}_{(2,4)} {p}_{(3,5)}-{p}_{(2,3)} {p}_{(4,5)}\\
    -{p}_{(3,4)} {p}_{(1,5)}+{p}_{(1,4)} {p}_{(3,5)}-{p}_{(1,3)} {p}_{(4,5)}\\
    {p}_{(2,4)} {p}_{(1,5)}-{p}_{(1,4)} {p}_{(2,5)}+{p}_{(1,2)} {p}_{(4,5)}\\
    -{p}_{(2,3)} {p}_{(1,5)}+{p}_{(1,3)} {p}_{(2,5)}-{p}_{(1,2)} {p}_{(3,5)}\\
    -{p}_{(2,3)} {p}_{(1,4)}+{p}_{(1,3)} {p}_{(2,4)}-{p}_{(1,2)} {p}_{(3,4)}\\
  \end{pmatrix} =
  \begin{pmatrix}
    -f_{2,3,4,5}\\
    -f_{1,3,4,5}\\
    f_{1,2,4,5}\\
    -f_{1,2,3,5}\\
    -f_{1,2,3,4}
  \end{pmatrix}
  .\]
  Now algorithm \ref{algo_forward} calls algorithm
  \ref{algo_multiple_degrees} applied to the pair
  $(\mathcal{M}^{\mathcal{F}_3}_{\mathcal{F}_2} (d_3)^\top,-V_3)$, where
  \[\mathcal{M}^{\mathcal{F}_3}_{\mathcal{F}_2} (d_3)^\top =
  \begin{pmatrix}
    -f_{2,3,4,5}&
    -f_{1,3,4,5}&
    f_{1,2,4,5}&
    -f_{1,2,3,5}&
    -f_{1,2,3,4}
  \end{pmatrix},\]
  and the module term ordering on the codomain of
  $\mathcal{M}^{\mathcal{F}_3}_{\mathcal{F}_2} (d_3)^\top$
  is switched to term over position down.
  Since all columns of the matrix above have degree 2,
  there is no need to separate columns by degrees and we can
  apply algorithm \ref{algo_single_degree} directly.
  The next step is to calculate a Gr\"obner basis of the image of
  this matrix (truncated in degree 2) and arrange its terms in a 
  matrix so that the leading terms of the columns are in decreasing
  order from left to right.
  This operation produces a matrix
  \[G=
  \begin{pmatrix}
    f_{1,2,3,4} & f_{1,2,3,5} & f_{1,2,4,5} & f_{1,3,4,5}
    & f_{2,3,4,5}
  \end{pmatrix},\]
  with column-by-column leading terms
  \[
  \begin{pmatrix}
    p_{2,3} p_{1,4} & p_{2,3} p_{1,5} & p_{2,4} p_{1,5} &
    p_{3,4} p_{1,5} & p_{3,4} p_{2,5}
  \end{pmatrix}.
  \]
  In this case, algorithm \ref{algo_single_degree} returns
  the change of basis matrix
  \begin{equation}\label{cob_matrix}
    \begin{pmatrix}0& 0& 0& 0&
      {-1}\\
      0& 0& 0& {-1}&
      0\\
      0& 0& 1& 0&
      0\\
      0& {-1}& 0& 0&
      0\\
      {-1}& 0& 0& 0&
      0\\
    \end{pmatrix}
  \end{equation}
  and the list of weights obtained as follows:
  \begin{align*}
    &\weight (p_{2,3} p_{1,4}) + (-2,-2,-2,-2,-2) = (-1,-1,-1,-1,-2),\\
    &\weight (p_{2,3} p_{1,5}) + (-2,-2,-2,-2,-2) = (-1,-1,-1,-2,-1),\\
    &\weight (p_{2,4} p_{1,5}) + (-2,-2,-2,-2,-2) = (-1,-1,-2,-1,-1),\\
    &\weight (p_{3,4} p_{1,5}) + (-2,-2,-2,-2,-2) = (-1,-2,-1,-1,-1),\\
    &\weight (p_{3,4} p_{2,5}) + (-2,-2,-2,-2,-2) = (-2,-1,-1,-1,-1).
  \end{align*}
  This output is passed back to algorithm \ref{algo_forward}
  which returns the pair $(C_2,V_2)$, where $C_2$ is the transpose
  of the matrix in equation (\ref{cob_matrix}) and
  \[V_2 = \{(1,1,1,1,2),(1,1,1,2,1),(1,1,2,1,1),(1,2,1,1,1),
  (2,1,1,1,1)\}.\]
  From this we deduce
  $F_2 \cong \mathbb{S}_{(2,1,1,1,1)} \CC^5 \otimes_\CC A(-3)$.

  Next we apply algorithm \ref{algo_forward} to the pair
  $(\mathcal{M}^{\mathcal{F}_2}_{\mathcal{F}_1} (d_2) C_2^{-1},V_2)$, where
  \[\mathcal{M}^{\mathcal{F}_2}_{\mathcal{F}_1} (d_2) =
  \begin{pmatrix}{-{p}_{1,5}}& {p}_{2,5}& {p}_{3,5}&
    {p}_{4,5}&
    0\\
    {p}_{1,4}& {-{p}_{2,4}}& {-{p}_{3,4}}& 0&
    {-{p}_{4,5}}\\
    {-{p}_{1,3}}& {p}_{2,3}& 0& {-{p}_{3,4}}&
    {p}_{3,5}\\
    {p}_{1,2}& 0& {p}_{2,3}& {p}_{2,4}&
    {-{p}_{2,5}}\\
    0& {-{p}_{1,2}}& {-{p}_{1,3}}& {-{p}_{1,4}}&
    {p}_{1,5}\\
  \end{pmatrix}.
  \]
  This results in a call to algorithm \ref{algo_single_degree}
  for the pair
  $((\mathcal{M}^{\mathcal{F}_2}_{\mathcal{F}_1} (d_2) C_2^{-1})^\top,-V_2)$,
  where
  \[(\mathcal{M}^{\mathcal{F}_2}_{\mathcal{F}_1} (d_2) C_2^{-1})^\top
  =
  \begin{pmatrix}0& {p}_{4,5}& {-{p}_{3,5}}& {p}_{2,5}&
    {-{p}_{1,5}}\\
    {-{p}_{4,5}}& 0& {p}_{3,4}& {-{p}_{2,4}}&
    {p}_{1,4}\\
    {p}_{3,5}& {-{p}_{3,4}}& 0& {p}_{2,3}&
    {-{p}_{1,3}}\\
    {-{p}_{2,5}}& {p}_{2,4}& {-{p}_{2,3}}& 0&
    {p}_{1,2}\\
    {p}_{1,5}& {-{p}_{1,4}}& {p}_{1,3}& {-{p}_{1,2}}&
    0\\
  \end{pmatrix}\]
  and the codomain of this matrix has the term over position down
  ordering. Our calculation gives the Gr\"obner basis
  matrix (with columns in decreasing order of their leading terms
  from left to right)
  \[
  G=\begin{pmatrix}{-{p}_{1,5}}& {-{p}_{2,5}}& {-{p}_{3,5}}&
    {-{p}_{4,5}}&
    0\\
    {p}_{1,4}& {p}_{2,4}& {p}_{3,4}& 0&
    {-{p}_{4,5}}\\
    {-{p}_{1,3}}& {-{p}_{2,3}}& 0& {p}_{3,4}&
    {p}_{3,5}\\
    {p}_{1,2}& 0& {-{p}_{2,3}}& {-{p}_{2,4}}&
    {-{p}_{2,5}}\\
    0& {p}_{1,2}& {p}_{1,3}& {p}_{1,4}&
    {p}_{1,5}\\
  \end{pmatrix},
  \]
  and the change of basis matrix
  \begin{equation}\label{cob_matrix2}
  \begin{pmatrix}0& 0& 0& 0&
    1\\
    0& 0& 0& {-1}&
    0\\
    0& 0& 1& 0&
    0\\
    0& {-1}& 0& 0&
    0\\
    1& 0& 0& 0&
    0\\
  \end{pmatrix}.
  \end{equation}
  The matrix of column-by-column leading terms of $G$ is
  \[
  \begin{pmatrix}0& 0& 0& 0&
    0\\
    0& 0& 0& 0&
    0\\
    0& 0& 0& 0&
    0\\
    {p}_{1,2}& 0& 0& 0&
    0\\
    0& {p}_{1,2}& {p}_{1,3}& {p}_{1,4}&
    {p}_{1,5}\\
  \end{pmatrix}\]
  so our weight computation goes as follows:
  \begin{align*}
    \weight (p_{1,2}) + (-1,-2,-1,-1,-1) = (0,-1,-1,-1,-1),\\
    \weight (p_{1,2}) + (-2,-1,-1,-1,-1) = (-1,0,-1,-1,-1),\\
    \weight (p_{1,3}) + (-2,-1,-1,-1,-1) = (-1,-1,0,1-,-1),\\
    \weight (p_{1,4}) + (-2,-1,-1,-1,-1) = (-1,-1,-1,0,-1),\\
    \weight (p_{1,5}) + (-2,-1,-1,-1,-1) = (-1,-1,-1,-1,0).
  \end{align*}
  This information is returned to our current run of algorithm
  \ref{algo_forward} which outputs the pair $(C_1,V_1)$, where
  $C_1$ is the transpose of the matrix in equation \ref{cob_matrix2}
  and 
  \[V_1 = \{(0,1,1,1,1),(1,0,1,1,1),(1,1,0,1,1),(1,1,1,0,1),
  (1,1,1,1,0)\}.\]
  We deduce $F_1 \cong \bigwedge^4 \CC^5 \otimes_\CC A(-2)$.

  Finally we apply algorithm \ref{algo_forward} to the pair
  $(\mathcal{M}^{\mathcal{F}_1}_{\mathcal{F}_0} (d_1) C_1^{-1},V_1)$, where
  \[\mathcal{M}^{\mathcal{F}_1}_{\mathcal{F}_0} (d_1)=
  \begin{pmatrix}f_{1,2,3,4} & f_{1,2,3,5} & f_{1,2,4,5} & f_{1,3,4,5} &
    f_{2,3,4,5} \end{pmatrix}.\]
  In turn, this applies algorithm \ref{algo_single_degree} to the
  pair $((\mathcal{M}^{\mathcal{F}_1}_{\mathcal{F}_0} (d_1) C_1^{-1})^\top,
  -V_1)$, where
  \[(\mathcal{M}^{\mathcal{F}_1}_{\mathcal{F}_0} (d_1) C_1^{-1})^\top=
  \begin{pmatrix}f_{2,3,4,5} \\ -f_{1,3,4,5} \\ f_{1,2,4,5} \\
    -f_{1,2,3,5} \\ f_{1,2,3,4} \end{pmatrix}=
  \begin{pmatrix}{p}_{3,4} {p}_{2,5}-{p}_{2,4} {p}_{3,5}+{p}_{2,3} {p}_{4,5}\\
    -{p}_{3,4} {p}_{1,5}+{p}_{1,4} {p}_{3,5}-{p}_{1,3} {p}_{4,5}\\
    {p}_{2,4} {p}_{1,5}-{p}_{1,4} {p}_{2,5}+{p}_{1,2} {p}_{4,5}\\
    -{p}_{2,3} {p}_{1,5}+{p}_{1,3} {p}_{2,5}-{p}_{1,2} {p}_{3,5}\\
    {p}_{2,3} {p}_{1,4}-{p}_{1,3} {p}_{2,4}+{p}_{1,2} {p}_{3,4}\\
  \end{pmatrix}\]
  and once again the codomain is endowed with the term over position
  down ordering. Since this matrix consists of a single column
  it is equal to its Gr\"obner basis matrix
  and the corresponding change of basis matrix is simply
  $\begin{pmatrix} 1\end{pmatrix}$. The leading term of this
  column is
  \[
  \begin{pmatrix}
    0\\0\\0\\0\\p_{2,3}p_{1,4}
  \end{pmatrix}
  \]
  which has weight
  \[
  \weight (p_{2,3} p_{1,4}) + (-1,-1,-1,-1,0) = (0,0,0,0,0).
  \]
  Algorithm \ref{algo_single_degree} concludes by passing its
  output to algorithm \ref{algo_forward} which in turn terminates
  by returning the pair $(C_0,V_0)$, where
  $C_0 = \begin{pmatrix} 1\end{pmatrix}$ and
  $V_0 = \{(0,0,0,0,0)\}$. This says $F_0 \cong A$, as expected.

  Finally, algorithm \ref{algo_res} ends by collecting all the
  intermediate results and returning the tuple $(V_0,V_1,V_2,V_3)$.
\end{ex}

\subsection{Weight propagation for graded components}\label{sec_algo_comps}
Our last algorithm can be used to recover the weights of the graded components of a module starting from a presentation. The following is assumed.
\begin{enumerate}[label=(\alph*),wide]
\item The sequence
\[F_1 \xrightarrow{d_1} F_0 \xrightarrow{\pi} M \to 0\]
is a presentation of the module $M$ in $\mod_{\righttoleftarrow T} A$.
\item The map $d_1$ is provided in matrix form $\mathcal{M}^{\mathcal{F}_1}_{\mathcal{F}_0} (d_1)$, where $\mathcal{F}_0$ is a homogeneous basis of $F_0$ and $\mathcal{F}_1$ is a homogeneous basis of $F_1$.
\item $\TT^n \langle \mathcal{F}_0 \rangle$ is equipped with a position up (resp. down) module term ordering.
\item $F_0$ admits a homogeneous basis of weight vectors $\tilde{\mathcal{F}}_0 = \{\tilde{f}_1,\ldots,\tilde{f}_s\}$ such that $\mathcal{M}^{\mathcal{F}_0}_{\tilde{\mathcal{F}}_0} (\id_{F_0})$ is upper (resp. lower) triangular.
\item $W = \{w_1,\ldots,w_s\}$ is an ordered list with $w_i = \weight (\tilde{f}_i)$, $\forall i\in\{1,\ldots,s\}$.
\end{enumerate}

\begin{algo}[weight propagation for graded components]\label{algo_graded_comps}
\algrenewcommand\algorithmicrequire{\textbf{Input:}}
\algrenewcommand\algorithmicensure{\textbf{Output:}}
\begin{algorithmic}[1]
\Require{\hfill
\begin{itemize}
\item $d$, a degree
\item $\mathcal{M}^{\mathcal{F}_1}_{\mathcal{F}_{0}} (d_1)$, the matrix of
  a presentation of $M$ as in assumption (b) above
\item $W$, a list of weights as in assumption (e) above
\end{itemize}
}
\Ensure{\hfill
  \begin{itemize}
  \item $V$, a list
    of weights for a basis of weight vectors of the graded
    component of $M$ of degree $d$
  \end{itemize}
~\hfill
}
\Function{PropagateGradedComponents}{$d,\mathcal{M}^{\mathcal{F}_1}_{\mathcal{F}_0} (d_1),W$}
   \State compute $\mathcal{G}$, homogeneous Gr\"obner basis of $\im d_1$, using $\mathcal{M}^{\mathcal{F}_1}_{\mathcal{F}_0} (d_1)$
   \State obtain $L$, a list of degree $d$ terms in $\TT^n \langle \mathcal{F}_0\rangle$ that are not divisible by the leading terms of elements of $\mathcal{G}$
   \State form $N$, a matrix with columns the component vectors (in the basis $\mathcal{F}_0$) of the elements of $L$
   \State $(C,V) := \text{\textsc{Propagate}} (N,W)$
   \State \textbf{return} $V$
\EndFunction
\end{algorithmic}
\end{algo}

\begin{ex}
  \label{exa:gr1}
  Consider the setup of example \ref{exa:3}.
  Our module $M$ is $A$ modulo the ideal $(x_1,x_2,y_1^2,y_1 y_2,
  y_2^2)$; $M$ is presented by the map $d_1 \colon F_1 \to F_0$
  where
  \[\mathcal{M}^{\mathcal{F}_1}_{\mathcal{F}_0} (d_1)=
  \begin{pmatrix}
    x_1 & x_2 & y_1^2 & y_1 y_2 & y_2^2
  \end{pmatrix}\]
  with respect to the the coordinate bases $\mathcal{F}_i$ of the
  free modules $F_i$.
  In particular, $F_0 = A$ and its unique basis element
  is a weight vector with weight $(0,0,0,0)$.

  We apply algorithm \ref{algo_graded_comps} to the tuple
  $(d,\mathcal{M}^{\mathcal{F}_1}_{\mathcal{F}_0} (d_1), W)$, with
  $d=(0,1)$ and $W=\{(0,0,0,0)\}$ to find the weights of the
  graded component of degree $(0,1)$ of $A/I$.

  Our software system finds
  \[\mathcal{G} = \{x_2,x_1,y_2^2,y_1 y_2,y_1^2\}\]
  to be a Gr\"obner basis of $I$. The terms in $A$ of degree $(0,1)$
  are $y_1$ and $y_2$; since neither is divisible by the leading
  terms of elements of $\mathcal{G}$, we form the matrix:
  \[N=
  \begin{pmatrix}
    y_1 & y_2
  \end{pmatrix}\]
  and apply algorithm \ref{algo_single_degree} to the pair 
  $(N,W)$ (via a preliminary call to algorithm
  \ref{algo_multiple_degrees}).
  It is easy to see that algorithm \ref{algo_single_degree}
  computes the following weights:
  \begin{align*}
    &\weight (y_2) + (0,0,0,0) = (0,0,0,1),\\
    &\weight (y_1) + (0,0,0,0) = (0,0,1,0).
  \end{align*}
  Then algorithm \ref{algo_graded_comps} returns the list of weights
  \[V=\{(0,0,0,1),(0,0,1,0)\},\]
  which tells us that $(A/I)_{(0,1)} \cong \CC \otimes \CC^2$ as a
  representation of $\GL_2 (\CC) \times \GL_2 (\CC)$.
\end{ex}

\begin{ex}
  \label{exa:gr2}
  Consider the setup of example \ref{exa:7}.
  The module $M$ is $A$ modulo the ideal $I$ generated by
  the Pl\"ucker equations of our Grassmannian.
  More explicitly, $M$ is presented by the map
  $d_1 \colon F_1 \to F_0$ where
  \[\mathcal{M}^{\mathcal{F}_1}_{\mathcal{F}_0} (d_1)=
  \begin{pmatrix}
    f_{1,2,3,4} & f_{1,2,3,5} & f_{1,2,4,5} & f_{1,3,4,5} & f_{2,3,4,5}
  \end{pmatrix},\]
  and the polynomials $f_{i,j,k,l}$ are the same as those
  introduced in example \ref{exa:7}.
  The module $F_0 = A$ has a unique basis element which
  is a weight vector of weight $(0,0,0,0,0)$.

  We apply algorithm \ref{algo_graded_comps} to the tuple
  $(d,\mathcal{M}^{\mathcal{F}_1}_{\mathcal{F}_0} (d_1), W)$, with
  $d=2$ and $W=\{(0,0,0,0,0)\}$.

  The polynomials in the presentation above form a Gr\"obner basis
  of $I$ and their leading terms are in order:
  \[
  \begin{array}{lllll}
    p_{2,3} p_{1,4}&p_{2,3} p_{1,5}&p_{2,4} p_{1,5}&p_{3,4} p_{1,5}&p_{3,4} p_{2,5}.
  \end{array}
  \]
  Via our computer algebra system we find the following list of
  50 terms of degree 2 in $A$ that are not divisible by the leading
  terms above:
  \[
  \begin{array}{lllll}
    p_{1,2}^2&p_{1,2} p_{1,3}&p_{1,2} p_{2,3}&p_{1,2} p_{1,4}&p_{1,2} p_{2,4}\\
    p_{1,2} p_{3,4}&p_{1,2} p_{1,5}&p_{1,2} p_{2,5}&p_{1,2} p_{3,5}&p_{1,2} p_{4,5}\\
    p_{1,3}^2&p_{1,3} p_{2,3}&p_{1,3} p_{1,4}&p_{1,3} p_{2,4}&p_{1,3} p_{3,4}\\
    p_{1,3} p_{1,5}&p_{1,3} p_{2,5}&p_{1,3} p_{3,5}&p_{1,3} p_{4,5}&p_{2,3}^2\\
    p_{2,3} p_{2,4}&p_{2,3} p_{3,4}&p_{2,3} p_{2,5}&p_{2,3} p_{3,5}&p_{2,3} p_{4,5}\\
    p_{1,4}^2&p_{1,4} p_{2,4}&p_{1,4} p_{3,4}&p_{1,4} p_{1,5}&p_{1,4} p_{2,5}\\
    p_{1,4} p_{3,5}&p_{1,4} p_{4,5}&p_{2,4}^2&p_{2,4} p_{3,4}&p_{2,4} p_{2,5}\\
    p_{2,4} p_{3,5}&p_{2,4} p_{4,5}&p_{3,4}^2&p_{3,4} p_{3,5}&p_{3,4} p_{4,5}\\
    p_{1,5}^2&p_{1,5} p_{2,5}&p_{1,5} p_{3,5}&p_{1,5} p_{4,5}&p_{2,5}^2\\
    p_{2,5} p_{3,5}&p_{2,5} p_{4,5}&p_{3,5}^2&p_{3,5} p_{4,5}&p_{4,5}^2.
  \end{array}
  \]
  These terms are assembled into a one-row matrix $N$ and
  algorithm \ref{algo_single_degree} is applied to the pair
  $(N,\{(0,0,0,0,0)\})$.
  We provide a sample of the weight computations performed:
  \begin{align*}
    &\weight (p_{1,2}^2) + (0,0,0,0,0) = (2,2,0,0,0),\\
    &\weight (p_{1,2} p_{1,3}) + (0,0,0,0,0) = (2,1,1,0,0),\\
    &\weight (p_{1,2} p_{3,4}) + (0,0,0,0,0) = (1,1,1,1,0).
  \end{align*}
  By analyzing a complete list of weights returned by the algorithm,
  it is possible to conclude that $(A/I)_2 \cong \mathbb{S}_{(2,2)}
  \CC^5$ as a representation of $\GL_5 (\CC)$.
\end{ex}

\begin{prop}\label{algo_graded_comps_prop}
Under the assumptions of this section, algorithm \ref{algo_graded_comps} returns an ordered list $V$ with the weights of the elements in a basis of weight vectors of $M_d$.
\end{prop}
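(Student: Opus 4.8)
One might hope to deduce proposition~\ref{algo_graded_comps_prop} straight from proposition~\ref{algo_single_degree_prop}, but the $A$-linear map whose matrix is $N$ need not be a morphism in $\mod_{\righttoleftarrow T} A$: its image is a monomial submodule with respect to $\mathcal{F}_0$, which is $T$-stable only when $\mathcal{F}_0$ already consists of weight vectors. So I would argue directly, treating the ``position up / upper triangular'' case (the other is dual). Write $L=\{t_1,\dots,t_r\}$ for the list of degree $d$ terms of $\TT^n\langle\mathcal{F}_0\rangle$ not divisible by any leading term of the Gr\"obner basis $\mathcal{G}$ of $\im d_1$, and factor each as $t_j=\hat t_j f_{k_j}$ with $\hat t_j\in\TT^n$ and $f_{k_j}\in\mathcal{F}_0$. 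By Macaulay's theorem~\cite[Cor.~2.4.11]{MR1790326}, applied to the presentation $M=F_0/\im d_1$, the residue classes $\pi(t_1),\dots,\pi(t_r)$ form a $\KK$-basis of $M_d$; in particular $\dim_\KK M_d=r$.

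The first step is to trace the effect of \textsc{Propagate}$(N,W)$. Since all columns of $N$ have the same degree $d$, algorithm~\ref{algo_multiple_degrees} reduces to a single call of algorithm~\ref{algo_single_degree} on $(N,W)$. The map with matrix $N$ sends the coordinate basis of its domain to $t_1,\dots,t_r$, so its image is the monomial $A$-submodule of $F_0$ generated by these terms; as they are distinct terms of a single degree, none divides another, so they are the unique minimal monomial generating set~\cite[Prop.~1.3.11]{MR1790326} and therefore the reduced Gr\"obner basis of that submodule. Hence the truncation $\mathcal{G}_{\le d}$ computed on line~3 of algorithm~\ref{algo_single_degree} is $L$ itself, the matrix $G$ of lines~4--8 has the $t_j$ as columns reordered so that their leading terms increase, and lines~13--17 return $V=\{\weight(\hat t_j)+w_{k_j}\}_{j=1}^r$ in the matching order, where $w_{k_j}=\weight(\tilde f_{k_j})$ by assumption~(e).

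It remains to see that these values are the weights of a basis of weight vectors of $M_d$. For each $j$ set $\tilde t_j:=\hat t_j\tilde f_{k_j}\in F_0$, a weight vector (a term times a basis weight vector) of weight $\weight(\hat t_j)+\weight(\tilde f_{k_j})=\weight(\hat t_j)+w_{k_j}$, by propositions~\ref{weight_of_term} and~\ref{weight_in_tensor_product}. The crux is to prove that $\{\tilde t_1,\dots,\tilde t_r\}$ is precisely the set of degree $d$ standard terms of $\im d_1$ relative to the basis $\tilde{\mathcal{F}}_0$ (with $\TT^n\langle\tilde{\mathcal{F}}_0\rangle$ given the position up ordering of the same type). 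For this I would establish a leading term compatibility, essentially rerunning the computation in the proof of proposition~\ref{weight_transfer}: for every nonzero homogeneous $v\in F_0$, if the leading term of $v$ relative to $\mathcal{F}_0$ is $s f_l$, then the leading term of $v$ relative to $\tilde{\mathcal{F}}_0$ is $s\tilde f_l$, with the same term $s$ and position $l$ --- the upper triangularity of $\mathcal{M}^{\mathcal{F}_0}_{\tilde{\mathcal{F}}_0}(\id_{F_0})$ together with the position up ordering forces every $\tilde{\mathcal{F}}_0$-term outranking $s\tilde f_l$ to have zero coefficient. The inverse change of basis is again upper triangular, so the same holds with the bases swapped; thus $s f_l\leftrightarrow s\tilde f_l$ identifies the leading term modules of $\im d_1$ in the two bases, and this bijection respects divisibility, whence $t_j$ is a degree $d$ standard term for $\mathcal{F}_0$ exactly when $\tilde t_j$ is one for $\tilde{\mathcal{F}}_0$. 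Applying Macaulay's theorem once more, now in the basis $\tilde{\mathcal{F}}_0$, the classes $\pi(\tilde t_1),\dots,\pi(\tilde t_r)$ form a $\KK$-basis of $M_d$; being nonzero images of weight vectors under the equivariant surjection $\pi$, they are weight vectors with $\weight(\pi(\tilde t_j))=\weight(\tilde t_j)=\weight(\hat t_j)+w_{k_j}$ by proposition~\ref{weight_in_equivariant_map}. This identifies $V$ with the list of weights of the basis of weight vectors $\{\pi(\tilde t_j)\}_{j=1}^r$ of $M_d$, as desired.

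I expect the main obstacle to be the leading term compatibility above. Although it is the same bookkeeping as in proposition~\ref{weight_transfer}, it has to be done for both kinds of position up ordering (term over position and position over term), and one must be careful that $s f_l\leftrightarrow s\tilde f_l$ identifies the leading term \emph{modules} --- not merely their Hilbert functions --- so that Macaulay's theorem can legitimately be applied in the basis $\tilde{\mathcal{F}}_0$.
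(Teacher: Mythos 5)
Your proof is correct, but it takes a genuinely different route from the paper's. The paper does exactly what you dismiss at the outset: it realizes $N$ as the matrix of a morphism in $\mod_{\righttoleftarrow T} A$ --- not with respect to $\mathcal{F}_0$, but by forming $E:=M_d\otimes_\KK A$, an equivariant section $\hat\varphi$ of $\pi$ in degree $d$ whose image is $\langle\tilde{\mathcal{B}}_d\rangle_\KK$ (where $\tilde{\mathcal{B}}_d=\{t\tilde f_i \mid tf_i\in\mathcal{B}_d\}$ and $\mathcal{B}_d$ is your list $L$), and the induced minimal map $\varphi\colon E\to F_0$, whose matrix with respect to the weight bases $\tilde{\mathcal{E}}$ and $\tilde{\mathcal{F}}_0$ coincides numerically with the algorithm's $N$; correctness is then inherited from proposition \ref{algo_single_degree_prop}. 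The pivotal fact there --- that $\pi(\tilde{\mathcal{B}}_d)$ is a weight basis of $M_d$ --- is obtained from the decomposition $(F_0)_d=(\ker\pi)_d\oplus\langle\tilde{\mathcal{B}}_d\rangle_\KK$, which uses only the single-term observation $\LT(t\tilde f_j)=tf_j$ together with division by $\mathcal{G}$ and a dimension count. You instead trace the algorithm directly (the reduced Gr\"obner basis of the monomial module generated by the distinct degree-$d$ terms of $L$ is $L$ itself, so the output is $\{\weight(\hat t_j)+w_{k_j}\}$) and reach the same weight basis $\{\pi(\hat t_j\tilde f_{k_j})\}$ via your leading-term compatibility for arbitrary homogeneous elements plus Macaulay's theorem reapplied in the basis $\tilde{\mathcal{F}}_0$. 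Both arguments hinge on the same triangularity/position-up interplay; your compatibility lemma is a strengthening of the computation in proposition \ref{weight_transfer} and holds under the same implicit reading used in the paper's Step~3 (entries of $\mathcal{M}^{\mathcal{F}_0}_{\tilde{\mathcal{F}}_0}(\id_{F_0})$ treated as scalars), and your care that it identifies the leading term modules, not just Hilbert functions, is exactly what legitimizes the second application of Macaulay. What each buys: the paper's detour through the universal property lets it reuse theorem \ref{main_thm} verbatim and never needs to transfer standard terms between bases; your argument is more self-contained and makes the algorithm's output explicit, at the cost of proving the compatibility lemma (in both position-up variants) and invoking Macaulay twice. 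Only your opening claim is too hasty: a deduction from proposition \ref{algo_single_degree_prop} is possible, provided one reinterprets $N$ with respect to the right bases, which is precisely the content of the paper's proof.
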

\begin{proof}
We organize the proof into six separate steps.

\begin{mydescription}[wide,style=nextline]
\item[Step 1: a basis of $M_d$.] Let $\mathcal{G}$ be a Gr\"obner basis of $\im d_1$ in the module term ordering on  $\TT^n \langle \mathcal{F}_0 \rangle$. Suppose $\mathcal{F}_0 = \{f_1,\ldots,f_s\}$. Define the set
\[\mathcal{B}_d := \{tf_i \in \TT^n \langle \mathcal{F}_0 \rangle \mid \deg (tf_i) = d \text{ and } \forall g\in \mathcal{G}, \LT (g) \nmid tf_i\},\]
consisting of all the degree $d$ terms of $\TT^n \langle \mathcal{F}_0 \rangle$ that are not multiples of the leading terms of some element in $\mathcal{G}$. By Macaulay's basis theorem \cite[Cor. 2.4.11]{MR1790326}, the residue classes of elements in $\mathcal{B}_d$ modulo $\im d_1$ form a $\KK$-basis of $M_d$. In particular, $\dim_\KK \langle \mathcal{B}_d\rangle_\KK = \dim_\KK M_d$, where $\langle \mathcal{B}_d\rangle_\KK$ is the $\KK$-vector subspace of $(F_0)_d$ generated by the terms in $\mathcal{B}_d$.

\item[Step 2: a subrepresentation of $(F_0)_d$.]
Define 
\[\tilde{\mathcal{B}}_d := \{t\tilde{f}_i \in F_0 \mid tf_i \in \mathcal{B}_d\},\]
and consider $\langle \tilde{\mathcal{B}}_d\rangle_\KK$, the $\KK$-vector subspace of $(F_0)_d$ generated by $\tilde{\mathcal{B}}_d$. Notice that all elements of $\tilde{\mathcal{B}}_d$ are $\KK$-linearly independent, hence $\dim_\KK \langle \tilde{\mathcal{B}}_d\rangle_\KK = \dim_\KK \langle \mathcal{B}_d\rangle_\KK = \dim_\KK M_d$. Moreover each element $t\tilde{f}_i\in\tilde{\mathcal{B}}_d$ is a weight vector for the action of $T$, since the terms $t\in\TT^n$ are weight vectors, by proposition \ref{weight_of_term}, and the $\tilde{f}_i$ come from a homogeneous basis of weight vectors of $F_0$.

\item[Step 3: $(F_0)_d = (\ker \pi)_d \oplus \langle \tilde{\mathcal{B}}_d\rangle_\KK$.]
Let $t\tilde{f}_j \in \tilde{\mathcal{B}}_d$. Since $\mathcal{M}^{\mathcal{F}_0}_{\tilde{\mathcal{F}}_0} (\id_{F_0})$ is upper (resp. lower) triangular, so is its inverse $\mathcal{M}_{\mathcal{F}_0}^{\tilde{\mathcal{F}}_0} (\id_{F_0})$. Suppose $\mathcal{M}_{\mathcal{F}_0}^{\tilde{\mathcal{F}}_0} (\id_{F_0}) = (u_{i,j})$, for some $u_{i,j} \in \KK$. Then
\[t\tilde{f}_j = \sum_{i=1}^j u_{i,j} tf_i \quad \left(\text{resp. } t\tilde{f}_j = \sum_{i=j}^s u_{i,j} tf_i \right).\]
Seeing how $\TT^n \langle \mathcal{F}_0 \rangle$ is equipped with a position up (resp. down) module term ordering, we deduce $\LT (t\tilde{f}_j) = tf_j\in \mathcal{B}_d$.

Now consider any element $f\in \langle \tilde{\mathcal{B}}_d\rangle_\KK$. Since $f$ is a $\KK$-linear combination of elements in $\tilde{\mathcal{B}}_d$, we must have $\LT (f) \in \mathcal{B}_d$ by what just observed. This implies that $\LT (f)$ is not divisible by the leading term of any element of $\mathcal{G}$, therefore the remainder of $f$ upon division by the elements of $\mathcal{G}$ is $f$ itself.

Suppose $f\in (\ker \pi)_d \cap \langle \tilde{\mathcal{B}}_d\rangle_\KK$. Since $\ker \pi = \im d_1$ and $\mathcal{G}$ is a Gr\"obner basis of $\im d_1$, the remainder of $f$ upon division by elements of $\mathcal{G}$ is zero. This forces $f=0$ and hence the sum of $(\ker \pi)_d$ and $\langle \tilde{\mathcal{B}}_d\rangle_\KK$, as subspaces of $(F_0)_d$, is direct. Looking at dimensions we obtain:
\[\dim_\KK (\ker \pi)_d + \dim_\KK \langle \tilde{\mathcal{B}}_d\rangle_\KK = \dim_\KK (\ker \pi)_d + \dim_\KK M_d = \dim_\KK (F_0)_d,\]
since $\pi$ is surjective. We conclude $(F_0)_d = (\ker \pi)_d \oplus \langle \tilde{\mathcal{B}}_d\rangle_\KK$. Notice that $(\ker\pi)_d$ is a subrepresentation of $T$ in $(F_0)_d$, since $\pi$ is $T$-equivariant. Therefore the direct sum decomposition holds as a decomposition of representations of $T$.

\item[Step 4: an explicit section of $\pi$ in degree $d$.]
We will define a $T$-equivariant map $\hat{\varphi} \colon M_d \to F_0$ such that $\forall m\in M_d$, $\pi (\hat{\varphi} (m)) = m$, a section of $\pi$ in degree $d$. Recall that the elements of $\tilde{\mathcal{B}}_d$ are weight vectors in $\langle \tilde{\mathcal{B}}_d\rangle_\KK$, so that $\pi ( \tilde{\mathcal{B}}_d)$ is a set of weight vectors in $M_d$. Given the decomposition $(F_0)_d = (\ker \pi)_d \oplus \langle \tilde{\mathcal{B}}_d\rangle_\KK$, we have $\pi (\langle \tilde{\mathcal{B}}_d\rangle_\KK) = M_d$. Since $\dim_\KK \langle \tilde{\mathcal{B}}_d\rangle_\KK = \dim_\KK M_d$, $\pi ( \tilde{\mathcal{B}}_d)$ is actually a basis of weight vectors of $M_d$.  Let us define $\hat{\varphi}$ on $\pi ( \tilde{\mathcal{B}}_d)$ by setting $\hat{\varphi} (\pi (\tilde{b})) := \tilde{b}$, $\forall \tilde{b} \in \tilde{\mathcal{B}}_d$. Extending linearly gives a $\KK$-linear map $\hat{\varphi} \colon M_d \to F_0$. Since $\pi (\hat{\varphi} (\pi (\tilde{b}))) = \pi(\tilde{b})$, $\forall \tilde{b} \in \tilde{\mathcal{B}}_d$, the map $\hat{\varphi}$ is a section of $\pi$ in degree $d$.

To show that $\hat{\varphi}$ is $T$-equivariant, it is enough to observe that, $\forall \tilde{b} \in \tilde{\mathcal{B}}_d$, $\pi (\tilde{b})$ is a weight vector with the same weight as $\tilde{b}$, by proposition \ref{weight_in_equivariant_map}, because $\pi$ is $T$-equivariant.

\item[Step 5: the map $\varphi \colon E\to F_0$.]
By the universal property of free modules in $\mod_{\righttoleftarrow T} A$, $\exists ! \varphi \colon M_d \otimes_\KK A \to F_0$ morphism in $\mod_{\righttoleftarrow T} A$ such that $\hat{\varphi} = \varphi \circ i_{M_d}$, where $i_{M_d} \colon M_d \to M_d \otimes_\KK A$ sends an element $m\in M_d$ to $m \otimes 1_A$. Set $E := M_d \otimes_\KK A$. Because $\pi(\tilde{\mathcal{B}}_d)$ is a basis of weight vectors of $M_d$, the set $\tilde{\mathcal{E}} := \{\pi (\tilde{b}) \otimes 1_A \mid \tilde{b} \in \mathcal{B}_d\}$ is a homogeneous basis of weight vectors of $E$. Moreover, $\forall \tilde{b} \in \tilde{\mathcal{B}}_d$, we have
\[\varphi (\pi (\tilde{b}) \otimes 1_A ) = \hat{\varphi} (\pi (\tilde{b})) = \tilde{b} \in \langle \tilde{\mathcal{B}}_d\rangle_\KK \subseteq (F_0)_d.\]
Notice that $i_{M_d}$ is an isomorphism in degree $d$, and that $\hat{\varphi}$ is injective because it is a section of $\pi$ in degree $d$; therefore $\varphi$ is injective in degree $d$. Because $E$ is generated in degree $d$, we conclude that $\varphi$ is a minimal map by part \ref{min2} of proposition \ref{minimality}.

\item[Step 6: an explicit matrix of $\varphi$.]
We will describe the matrix $N:= \mathcal{M}^{\tilde{\mathcal{E}}}_{\tilde{\mathcal{F}}_0} (\varphi)$. The elements in $\tilde{\mathcal{B}}_d$ are, by definition, of the form $t\tilde{f}_i$, where $tf_i\in\mathcal{B}_d$. For $t\tilde{f}_i \in \tilde{\mathcal{B}}_d$, we have, by construction,
\[\varphi (\pi (t\tilde{f}_i) \otimes 1_A) = t \tilde{f}_i.\]
Therefore the column of $N$ corresponding to the element $\pi (t \tilde{f}_i) \otimes 1_A\in \tilde{\mathcal{E}}$ has the term $t$ in the $i$-th row and zeros everywhere else. In other words, $N$ is the matrix whose columns are the column vectors of terms in $\mathcal{B}_d$ expressed in the homogeneous basis $\mathcal{F}_0$ of $F_0$. Since $\mathcal{B}_d$ can be obtained explicitly after computing a Gr\"obner basis of $\im d_1$, this construction of the matrix $N$ can be carried out explicitly.
\end{mydescription}

Finally we can use algorithm 2, with input the matrix of $\varphi$ just described and the list of weights $W$ of $F_0$, to recover the weights of $E = M_d\otimes_\KK A$. Since these are the same as the weights of $M_d$, this concludes the proof.
\end{proof}

\subsection{Computing over subfields}\label{subfields}
Let $\mathbb{L}$ and $\KK$ be fields with $\KK\subseteq\mathbb{L}$. Consider the polynomial ring $A_\mathbb{L} := \mathbb{L} [x_1,\ldots,x_n]$ with a positive $\ZZ^m$-grading and identify the polynomial ring $A_\KK := \KK [x_1,\ldots,x_n]$ with a (graded) subring of $A_\mathbb{L}$. As usual, all our modules (over $A_\mathbb{L}$ or $A_\KK$) will be finitely generated and graded.
Consider a graded $A_\mathbb{L}$-submodule $M$ of the free module $F = \bigoplus_{d\in\ZZ^m} A_\mathbb{L} (-d)^{\beta_d}$. Following \cite[Defin. 2.4.14]{MR1790326}, $M$ is defined over $\KK$ if there exist elements $m_1,\ldots,m_l$ in the free $A_\KK$-module $\bigoplus_{d\in\ZZ^m} A_\KK (-d)^{\beta_d} \subseteq F$ which generate $M$ as an $A_\mathbb{L}$-module.

If $M$ is defined over $\KK$, then
\begin{itemize}[wide]
\item computing the reduced Gr\"obner basis $\mathcal{G}$ of $M$ over $\KK$ or over $\mathbb{L}$, using the elements $m_1,\ldots,m_l$, yields the same result, by \cite[Prop. 2.4.16.b]{MR1790326};
\item the leading terms of the elements of $\mathcal{G}$ do not depend on the field used for the computation, by \cite[Prop. 2.4.16.a]{MR1790326};
\item the matrix of the change of basis between the elements $m_1,\ldots,m_l$ and the elements of $\mathcal{G}$ has entries in $A_\KK$.
\end{itemize}
The third bullet point is an immediate consequence of the first one.

To explain how this affects our algorithms, let $T$ be a torus over $\mathbb{L}$ with an $\mathbb{L}$-linear action on $A_\mathbb{L}$ that is compatible with grading and multiplication.
Let $\varphi \colon E\to F$ be a minimal map of free modules in the category $\mod_{\righttoleftarrow T} A_\mathbb{L}$. Suppose there exist homogeneous bases $\mathcal{E}$ of $E$ and $\mathcal{F}$ of $F$ such that $\mathcal{M}^{\mathcal{E}}_{\mathcal{F}} (\varphi)$ has entries in $\KK$; equivalently, $\im\varphi$ is defined over $\KK$. Our previous observations imply that the steps used in algorithm \ref{algo_single_degree} do not depend on the field. In practice, using algorithm \ref{algo_single_degree} or \ref{algo_multiple_degrees} to recover the weights of $E$ from the weights of $F$ and the map $\varphi$, will produce the same result whether we carry out our computations over $\mathbb{L}$ or over $\KK$.

Algorithms \ref{algo_forward}, \ref{algo_res} and \ref{algo_graded_comps} are based on algorithm \ref{algo_multiple_degrees} so we expect them to work over subfields as well. Indeed they do, because of the following additional comments. If $\varphi \colon E\to F$ is a minimal map of free modules in $\mod_{\righttoleftarrow T} A_\mathbb{L}$ with $\im\varphi$ defined over $\KK$, then:
\begin{itemize}[wide]
\item the image of the dual map $\varphi^\vee$ is also defined over $\KK$ (clearly since a matrix of $\varphi^\vee$ is the transpose of a matrix of $\varphi$);
\item the syzygy module of a Gr\"obner basis of $\im\varphi$ is also defined over $\KK$ (this implies a minimal free resolution of a module can be computed over $\KK$);
\item each graded component of $\im\varphi$ has a basis consisting of those terms that are not divisible by the leading terms of the elements in a Gr\"obner basis of $\im\varphi$, in particular this basis does not depend on the field.
\end{itemize}

\begin{remark}
The possibility of performing our algorithms over a subfield $\KK$ of $\mathbb{L}$ is especially useful in the case where computations over $\mathbb{L}$ are not feasible. To further illustrate the issue, we discuss the setup for our implementation \cite{Galetto:2013ab} in the software system \texttt{Macaulay2} \cite{Grayson:uq}.

Let $G$ be a complex semisimple algebraic group and let $T$ be a maximal torus in $G$. Every finite dimensional representation $V$ of $G$ is uniquely determined by its weights (counted with multiplicity) for the action of $T$. Moreover $V$ decomposes uniquely into a direct sum of irreducible representations parametrized by so-called highest weights. Given a complete list of weights of $V$ for the action of $T$, the highest weights can be recovered using a recursive formula of Freudenthal \cite[\S22.3]{Humphreys:1978fk} which holds over $\mathbb{C}$.

Our objects of interest are modules in $\mod_{\righttoleftarrow T} A_\mathbb{C}$ that are defined over $\mathbb{Q}$. While it is not possible to compute over $\mathbb{C}$, we can compute over $\mathbb{Q}$ in \texttt{Macaulay2}. In particular, we can calculate minimal free resolutions and graded components over the rationals. The implementation of our algorithms provides lists of weights that can then be interpreted and decomposed over the complex numbers.
\end{remark}

\bibliographystyle{amsalpha}
\bibliography{/Users/fez/BibTeX/math.bib}

\end{document}